\renewcommand{\section}{\@startsection{section}{1}{0pt}{20pt}{6pt}{\large\bfseries}}
\numberwithin{equation}{section}
\theoremstyle{plain}
  \newtheorem{theorem}{Theorem}[section]
  \newtheorem{proposition}[theorem]{Proposition}
  \newtheorem{lemma}[theorem]{Lemma}
   \newtheorem{corollary}[theorem]{Corollary}
\newtheorem{definition}[theorem]{Definition}
  \newtheorem{remark}[theorem]{Remark}
\def\QED{\hfill $\Box$}
\def \\ { \cr }
\def\R{\mathbb{R}}
\def \1{1 \mkern -6mu 1}
\def\E{\mathbb{E}}
\def\P{\mathbb{P}}
\def \e{{\rm e}}
\def \d{{\rm d}}
\newcommand{\LL}{L\'{e}vy }
\newcommand{\LLP}{L\'{e}vy process }
\newcommand{\LLPs}{L\'{e}vy processes }
\newcommand{\PP}{\overline{\Pi}}
\newcommand{\PPp}{\overline{\Pi}_{+}}
\newcommand{\PPP}{\overline{\overline{\Pi}}}
\newcommand{\PPPn}{\overline{\overline{\Pi}}_{-}}
\newcommand{\PPPp}{\overline{\overline{\Pi}}_{+}}
\newcommand{\IInf}{\int_{0}^{\infty}}
\newcommand{\lb}{\left(}
\newcommand{\rb}{\right)}
\begin{document}
\title[Exponential Functional]{A Wiener-Hopf type factorization for the exponential functional of L\'evy processes}

\author{J.C. Pardo}
\address{Centro de Investigaci\'on en Matem\'aticas A.C. Calle Jalisco s/n. 36240 Guanajuato, M\'exico.}
\email{jcpardo@cimat.mx}

\author{P. Patie}

\address{D\'epartment de math\'ematique, Universit\'e Libre de Bruxelles, B-1050 Bruxelles, Belgium.}
\email{ppatie@ulb.ac.be}

\author{M. Savov}
\address{ New College, University of Oxford, Oxford, Holywell street OX1 3BN, UK.}
\email{mladensavov@hotmail.com} \email{savov@stats.ox.ac.uk}

\thanks{The authors are grateful to the reviewer's valuable comments that improved the presentation of the manuscript. The two last authors would like to thank  A.E. Kyprianou for stimulating  discussions while they were visiting the University of Bath.}
\begin{abstract}
For a  L\'evy process $\xi=(\xi_t)_{t\geq0}$  drifting to $-\infty$, we define the so-called exponential functional as follows
\[{\rm{I}}_{\xi}=\int_0^{\infty}e^{\xi_t} dt.\]
Under mild conditions on $\xi$, we show that  the following factorization of exponential functionals
\[{\rm{I}}_{\xi}\stackrel{d}={\rm{I}}_{H^-} \times {\rm{I}}_{Y}\]
holds, where, $\times $ stands for the product of  independent random variables, $H^-$ is  the descending ladder height process of $\xi$ and $Y$ is a spectrally positive L\'evy process with a negative mean constructed from its ascending ladder height process. As a by-product, we  generate an integral or power series representation for the law of   ${\rm{I}}_{\xi}$ for a large class of L\'evy processes with two-sided jumps and also  derive some new distributional properties. The proof of our main result relies on a fine Markovian study of  a class of generalized Ornstein-Uhlenbeck processes which is of independent interest on its own. We use and refine an alternative approach of studying the stationary measure of a Markov process which avoids some technicalities and  difficulties that appear in the classical method of employing the generator of the dual Markov process.
\newline

\textbf{2010 Mathematics Subject Classification: 60G51, 60J25, 47A68, 60E07}
\end{abstract}

\maketitle
\newpage
\section{Introduction and main results}
 We are interested in  studying the law of  the so-called exponential functional of L\'evy processes which is defined as follows
 \[{\rm{I}}_{\xi}=\int_0^{\infty}e^{\xi_t} dt,\]
where $\xi=(\xi_t)_{t\geq0}$ is a \LLP starting from $0$ and drifting to $-\infty$.
Recall that a \LLP $\xi$ is a process with stationary and independent increments and its law is characterized completely by its L\'evy-Khintchine exponent $\Psi$ which takes the following form
\begin{equation}\label{Levy-K}
\log \E\left[e^{z \xi_{1}}\right]=\Psi(z)=bz +\frac{\sigma^{2}}{2}z^{2}+\int_{-\infty}^{\infty}\left(e^{zy} - 1 - zy\mathbb{I}_{\{|y|<1\}}\right)\Pi(dy), \text{ for any $z\in i\R$,}
\end{equation}
where $\sigma\geq0, b \in \R$ and $\Pi$ is a \LL measure  satisfying the condition
$\int_{\mathbb{R}}(y^{2}\wedge 1)\Pi(dy)<\infty$. See \cite{Bertoin-96} for more information on \LL processes.

The exponential functional ${\rm{I}}_{\xi}$ has attracted the interest of many researchers over the last two decades. This is mostly due to the prominent role played by the law of ${\rm{I}}_{\xi}$ in the study of important processes, such as self-similar Markov processes, fragmentation and branching processes but also in various settings ranging from astrophysics, biology  to  financial and insurance mathematics, see the survey paper \cite{Bertoin-Yor-05}.

 So far there are two main approaches which have been developed and used to derive information about the law of the exponential functional. The first one uses the fact that the Mellin transform of ${\rm{I}}_{\xi}$ is a solution to a functional equation, see \eqref{Maulik} below, and is due to Carmona et al.~\cite{Carmona-Petit-Yor-97} and has been extended by Maulik and Zwart \cite{Maulik-Zwart-06}. It is important to note that $\eqref{Maulik}$ is useful only under the additional assumption that $\xi$ possesses some finite, positive exponential moments since then it is defined on a strip in the complex plane. This equation can be solved for exponential functionals of negative of subordinators and spectrally positive L\'evy processes yielding some simple expressions for their positive and negative integer moments respectively, which, in both cases, determine the law.  Recently, Kuznetsov and Pardo \cite{Kuznetsov-Pardo-11} have used some special instances of L\'evy processes, for which the solution of the functional equation can directly be guessed and verified from \eqref{Maulik}, to derive some information concerning the law of ${\rm{I}}_{\xi}$. It is worth pointing out that, in general, it is not an easy exercise to invert the Mellin (or moments) transform of $\rm{I}_{\xi}$ since a fine analysis of its asymptotic behavior is required. This  Mellin transform approach relies on two difficult tasks: to find a solution of the functional equation and to provide a general criterion to ensure the uniqueness of its solution. For instance, this approach does not seem to successfully cope with the whole class of spectrally negative L\'evy processes.

 The second methodology, which has been developed  recently by the second author in \cite{Patie-06c} and \cite{Patie-abs-08}, is based on the well-known  relation  between the law of ${\rm{I}}_{\xi}$ and  the distribution of the absorption time of  positive self-similar Markov processes which were introduced by Lamperti \cite{Lamperti-72} in the context of  limit theorems for Markov processes. Indeed, in \cite{Patie-abs-08}, it is  shown that the law of ${\rm{I}}_{\xi}$ can be expressed as an invariant function of a transient Ornstein-Uhlenbeck companion process to the self-similar Markov process. Using some  potential theoretical devices, a power series and  a contour integral representation of the density is provided   when $\xi$ is a possibly killed spectrally negative L\'evy process.

 In this paper, starting from a large class of L\'evy processes, we show that the  law  of ${\rm{I}}_{\xi}$ can be factorized into the product of independent exponential functionals associated with two companion L\'evy processes, namely the descending ladder height process of $\xi$ and  a spectrally positive L\'evy process constructed from its ascending ladder height process. It is well-known that these two subordinators appear in the Wiener-Hopf factorization of L\'evy processes. The laws of these exponential functionals are uniquely determined either by their positive or negative integer moments. Moreover, whenever the law of any of these can be expanded in series we can in general develop the law of ${\rm{I}}_{\xi}$ in series. Thus, for example, the requirements put on the \LL measure of $\xi$ in \cite{Kuznetsov-Pardo-11} can be relaxed to conditions only on the positive jumps (the \LL measure on the positive half-line) of $\xi$ thus enlarging considerably the class of \LLPs $\xi$, for which we can obtain a series expansion of the law of ${\rm{I}}_{\xi}$.

  Although our main result may have a formal explanation through the Wiener-Hopf factorization combined with the functional equation \eqref{Maulik}, the proof is rather complicated and involves a careful study of some generalized Ornstein-Uhlenbeck (for short GOU) processes, different from the ones  mentioned above. For this purpose, we deepen  a technique used by Carmona et al.~\cite[Proposition 2.1]{Carmona-Petit-Yor-97}
  and further developed in \cite{Kuznetsov-Pardo-Savov-11}, which relates the law  of ${\rm{I}}_{\xi}$ to  the stationary measure of a GOU process. More precisely, we show that the density function of ${\rm{I}}_{\xi}$, say $m_{\xi}$, is, under very mild conditions, the unique function satisfying the equation $\mathcal{L}m_{\xi}=0$, where $\mathcal{L}$ is an "integrated infinitesimal" operator, which is strictly of an integral form. The latter allows for a smooth and effortless application of Mellin and Fourier transforms. We believe this method itself will attract some attention as it removes generic difficulties related to the study of the invariant measure via the dual Markov process such as the lack of smoothness properties for the density of the stationary measure and also application of transforms which usually requires the use of Fubini  Theorem which is difficult to verify when dealing with non-local operators.

 Before stating our main result let us introduce some notation. First, since in our setting  $\xi$ drifts to $-\infty$, it is well-known that the ascending (resp.~descending) ladder height process $H^+=(H^+(t))_{t\geq0}$  (resp.~$H^{-}=-H^{-,*}=(-H^{-,*}(t))_{t\geq0}$) is a  killed (resp.~proper) subordinator.   Then,  we write,  for any $z\in i\mathbb{R}$,
 \begin{equation}\label{L-KLadder}
 \phi_{+}(z)=\log \E\left[\exp(zH^+(1))\right] =
 \delta_{+}z + \int_{(0,\infty)}(\e^{zy}-1)\mu_+(\d y)-k_{+}\,,
 \end{equation}
 where $\delta_+\geq0$ is the drift and $k_{+}>0$ is the killing rate. Similarly,  with $\delta_-\geq0$,  we have
  \begin{equation}\label{L-KLadder1}
 \phi_{-}(z)=\log \E\left[\exp(zH^-(1))\right]=
 -\delta_{-}z -\int_{(0,\infty)}(1-\e^{-zy})\mu_-(\d y)\,.
 \end{equation}
We recall that the integrability condition $\int_0^{\infty} (1\wedge y)\mu_{\pm}(dy)<\infty$ holds. The Wiener-Hopf factorization then reads off as follows
  \begin{equation}\label{eq:wh}
 \Psi(z)=-c\phi_{+}(z)\phi_{-}(z)=-\phi_{+}(z)\phi_{-}(z), \text{ for any $z\in i\R$,}
 \end{equation}
where we have used  the convention that the local times  have been normalized in a way that $c=1$, see (5.3.1) in \cite{Doney}. 
 We avoid further discussion as we assume \eqref{eq:wh} holds with $c=1$.

\begin{definition}\label{Definition}
 We denote by $\mathcal{ P}$   the set of positive measures on $\R_{+}$ which admit a non-increasing density.
\end{definition}
Before we formulate the main result of our paper we introduce the two main hypothesis:
\begin{enumerate}
\item[($\mathcal{H}_1$)]   Assume further that $-\infty<\E\left[\xi_{1}\right]$ and that one of the following conditions holds:
\begin{description}
 \item[E${}_+$] $\mu_+ \in \mathcal{ P}$ and there exists $z_+>0$ such that for all $z$ with, $\Re(z) \in (0,z_+),$ we have $|\Psi(z)|<\infty$.
\item [P+] $\Pi_+ \in \mathcal{ P}$.
\end{description}
\item [($\mathcal{H}_2$)] Assume that
\begin{description}
\item[P${}_{\pm}$]  $\mu_+ \in \mathcal{ P},\: k_+>0$ and $\mu_-  \in \mathcal{ P}.$
\end{description}
\end{enumerate}
Then the following result holds.
\begin{theorem}\label{MainTheorem}
Assume that  $\xi$ is a L\'evy process that drifts to $-\infty$ with characteristics of the ladder height processes as in \eqref{L-KLadder} and \eqref{L-KLadder1}. Let either ($\mathcal{H}_1$) or ($\mathcal{H}_2$) holds. Then,
in both cases,  there exists a spectrally positive L\'evy process $Y$ with a negative mean whose Laplace exponent $\psi_+$ takes the form  \begin{equation}
\label{Phi}\psi_+(-s)=-s\phi_{+}(-s)=\delta_{+}s^{2}+k_{+}s+s^{2}\int_{0}^{\infty}e^{-sy}\mu_+(y,\infty)dy,\: s\geq0,
\end{equation}
and the following factorization holds 
\begin{equation}\label{MainAssertion}
{\rm{I}}_{\xi}\stackrel{d}={\rm{I}}_{H^{-}}\times {\rm{I}}_{Y}
\end{equation}
where $\stackrel{d}=$ stands for the  identity in law and  $\times$  for the product of independent random variables. 
\end{theorem}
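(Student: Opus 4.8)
The plan is to realise the law of each exponential functional in \eqref{MainAssertion} as the stationary law of a generalized Ornstein--Uhlenbeck (GOU) process, to reduce \eqref{MainAssertion} to the single equation $\mathcal L_{\xi}g=0$ for the density $g$ of ${\rm{I}}_{H^{-}}\times{\rm{I}}_{Y}$, and then to identify $g$ with the density $m_{\xi}$ of ${\rm{I}}_{\xi}$ by a uniqueness argument; the Wiener--Hopf factorization \eqref{eq:wh} supplies exactly the compatibility that makes this reduction work. First one must make sense of $Y$. In case \textbf{P+} of ($\mathcal{H}_1$), Vigon's identity $\mu_{+}(x,\infty)=\int_{[0,\infty)}\overline{\Pi}_{+}(x+y)\,U_{-}(\d y)$, with $U_{-}$ the renewal measure of the subordinator $H^{-,*}$, shows that $\Pi_{+}\in\mathcal P$ forces $\mu_{+}$ to have a non-increasing density; hence under either hypothesis one may assume $\mu_{+}\in\mathcal P$, i.e.\ $x\mapsto\mu_{+}(x,\infty)$ is non-increasing and convex. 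A double integration by parts in \eqref{Phi} then exhibits $\psi_{+}(-s)=-s\phi_{+}(-s)$ as the Laplace exponent of a spectrally positive \LLP $Y$ with Gaussian coefficient $\delta_{+}$ and a L\'evy measure whose tail is governed by the (signed) second derivative of $x\mapsto\mu_{+}(x,\infty)$, its non-negativity being exactly the content of $\mu_{+}\in\mathcal P$. Since $\phi_{+}(0^{-})=-k_{+}$ with $k_{+}>0$, one gets $\E[Y_{1}]=\psi_{+}'(0^{-})=-k_{+}<0$, so $Y$ drifts to $-\infty$ and ${\rm{I}}_{Y}<\infty$ a.s.; likewise $H^{-}=-H^{-,*}$ drifts to $-\infty$ and ${\rm{I}}_{H^{-}}<\infty$ a.s.

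For $\zeta\in\{\xi,H^{-},Y\}$ the law of ${\rm{I}}_{\zeta}$ is the unique stationary law of the GOU process $(V^{\zeta}_{t})_{t\geq0}$, which arises --- after time reversal --- as the weak $t\to\infty$ limit of $\e^{\zeta_{t}}\bigl(x+\int_{0}^{t}\e^{-\zeta_{s}}\,\d s\bigr)$, in the spirit of \cite[Proposition~2.1]{Carmona-Petit-Yor-97} and \cite{Kuznetsov-Pardo-Savov-11}. The core of the proof is to show, first, that ${\rm{I}}_{\zeta}$ has a density $m_{\zeta}$ --- this is where the $\mathcal P$ hypotheses on $\mu_{\pm}$ (respectively $\Pi_{+}$) are spent --- and, second, to recast the stationarity identity $\int_{0}^{\infty}\mathcal A_{\zeta}f(v)\,m_{\zeta}(v)\,\d v=0$, valid for $f$ in a suitable class of test functions, in the form $\mathcal L_{\zeta}m_{\zeta}=0$, where $\mathcal L_{\zeta}$ is obtained from the extended generator $\mathcal A_{\zeta}$ by one integration in the space variable. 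Crucially, $\mathcal L_{\zeta}$ is then a purely integral operator: it has no differential part, and its nonlocal terms act on $m_{\zeta}$ only through integrals involving the tails $\mu_{\pm}(\cdot,\infty)$, respectively $\overline{\Pi}$. This is the device advertised in the introduction; it permits a direct application of Mellin and Fourier transforms to $\mathcal L_{\zeta}m_{\zeta}=0$, with no delicate Fubini argument for non-local operators, and on the strip where $m_{\zeta}$ has finite moments it reproduces the Carmona--Petit--Yor functional equation $M_{\zeta}(s+1)=\frac{s}{-\Psi_{\zeta}(s)}M_{\zeta}(s)$, where $\Psi_{\xi}=\Psi$, $\Psi_{H^{-}}=\phi_{-}$ and $\Psi_{Y}=\psi_{+}$.

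Let $g(x)=\int_{0}^{\infty}m_{H^{-}}(x/t)\,m_{Y}(t)\,\frac{\d t}{t}$ be the density of ${\rm{I}}_{H^{-}}\times{\rm{I}}_{Y}$. Formally, the factorization \eqref{MainAssertion} is explained by the identity $\Psi=-\phi_{+}\phi_{-}$ of \eqref{eq:wh} together with $\psi_{+}(s)=s\phi_{+}(s)$, which give
\[
\frac{s}{-\Psi(s)}=\frac{s}{\phi_{+}(s)\phi_{-}(s)}=\frac{s}{-\phi_{-}(s)}\cdot\frac{s}{-\psi_{+}(s)},
\]
so that the Mellin transform of $g$, being the product of those of $m_{H^{-}}$ and $m_{Y}$, obeys the same functional equation as that of $m_{\xi}$. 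To make this rigorous without assuming positive exponential moments of $\xi$ --- which hold, on a strip, only under \textbf{E}$_{+}$ --- I would instead argue at the level of the integral operators: computing the Mellin transform of $\mathcal L_{\xi}g$ along the line $\Re(s)=1$ (legitimate because there it is a genuine transform, the relevant transforms of $m_{H^{-}}$ and $m_{Y}$ being the characteristic functions of $\log{\rm{I}}_{H^{-}}$ and $\log{\rm{I}}_{Y}$), then using $\mathcal L_{H^{-}}m_{H^{-}}=0$, $\mathcal L_{Y}m_{Y}=0$ and the exponent factorization \eqref{eq:wh} to collapse it, one finds it vanishes, whence $\mathcal L_{\xi}g=0$ by injectivity. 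It remains to prove that, under ($\mathcal{H}_1$) or ($\mathcal{H}_2$), $m_{\xi}$ is the \emph{only} probability density on $\R_{+}$ solving $\mathcal L_{\xi}m=0$; together with $\mathcal L_{\xi}g=0$ this gives $g=m_{\xi}$, that is \eqref{MainAssertion}. The hypothesis $-\infty<\E[\xi_{1}]$ is used here and in the previous paragraph: it ensures $\E[{\rm{I}}_{\xi}^{-1}]<\infty$ and fixes the behaviour of $m_{\xi}$ near the origin, which is what keeps the integration by parts defining $\mathcal L_{\xi}$ free of boundary terms.

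I expect the two main obstacles to be: (a) the rigorous passage from $\mathcal A_{\zeta}$ to the purely integral operator $\mathcal L_{\zeta}$ --- extending the stationarity identity to the non-compactly-supported test functions one actually needs and checking that the integration by parts leaves no surviving boundary term, which is precisely where the $\mathcal P$ conditions and the finiteness of $\E[\xi_{1}]$ do their work; and (b) the uniqueness of the solution of $\mathcal L_{\xi}m=0$ among probability densities, which has to be established without the strip-analyticity of $M_{\xi}$ that underlies the classical Mellin-transform approach of \cite{Carmona-Petit-Yor-97} and \cite{Maulik-Zwart-06}, and which is exactly what lets the theorem reach L\'evy processes with two-sided jumps having no exponential moments.
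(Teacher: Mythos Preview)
Your overall architecture---construct $Y$ from $\mu_{+}\in\mathcal P$, realise each ${\rm I}_\zeta$ as the stationary law of a GOU process, pass to the ``integrated'' operator $\mathcal L_\zeta$, show $\mathcal L_\xi g=0$ for the product density $g$, and conclude by uniqueness---is exactly what the paper does in the case ${\bf E}_+$. The reduction $\Pi_+\in\mathcal P\Rightarrow\mu_+\in\mathcal P$ via Vigon's identity is also how the paper starts the case ${\bf P+}$.

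The gap is in your treatment of the cases without exponential moments. You propose to compute the Mellin transform of $\mathcal L_\xi g$ ``along the line $\Re(s)=1$'' and collapse it using the Wiener--Hopf factorization. But look at how that Mellin transform is actually obtained: the paper's identity (Lemma~\ref{AuxLemma1})
\[
\mathcal M_{\mathcal L\kappa}(z)=\frac{\Psi(z)}{z^{2}}\,\mathcal M_{\kappa}(z+1)+\frac{1}{z}\,\mathcal M_{\kappa}(z)
\]
comes from swapping integrals in terms like $\int_0^\infty x^{z-1}\int_x^\infty \kappa(y)y^{-1}\,\d y\,\d x$, and the inner integral $\int_0^y x^{z-1}\,\d x$ diverges unless $\Re(z)>0$. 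Once you are forced onto $\Re(z)>0$, the appearance of $\Psi(z)$ in the formula means you need $|\Psi(z)|<\infty$ on a strip with positive real part---which is precisely hypothesis ${\bf E}_+$. Working on $\Re(s)=1$ (in your normalisation) does not escape this: either you are on the imaginary axis for $z$, where the derivation breaks down, or you are at $\Re(z)=1$, where $\Psi$ need not be defined. So the Mellin route to $\mathcal L_\xi g=0$ is blocked exactly in the cases you are trying to cover.

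The paper's way out is \emph{approximation}, which your proposal does not mention. For ${\bf P+}$ one thins the positive jumps of $\xi$ by an exponential factor to produce $\xi^{(n)}$ satisfying ${\bf E}_+$; the factorization \eqref{MainAssertion} then holds for each $n$, and one passes to the limit. This last step is the real work of Section~\ref{proof:mt}: one needs (i) that the ladder height processes of $\xi^{(n)}$ converge to those of $\xi$ (including the killing rates), handled by a sharpening of a lemma of Vigon on the bivariate Laplace exponents $k^{(n)}(\alpha,\beta)$; and (ii) that the exponential functionals converge, which is delicate because ${\rm I}_\zeta$ is not continuous in the Skorohod topology---the paper proves this separately for negatives of subordinators and for spectrally positive processes via moment determinacy (Propositions~\ref{prop:ms}, \ref{prop:msp}), and in general via tightness combined with the GOU convergence Theorem~\ref{Lemma1}. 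The case ${\bf P}_\pm$ is handled by a double approximation in $\mu_+$ and $\mu_-$; note also that $(\mathcal H_2)$ does \emph{not} assume $-\infty<\E[\xi_1]$, so your remark that this hypothesis fixes the behaviour of $m_\xi$ near $0$ cannot carry the whole argument there---the approximating processes $\xi^{(p,n)}$ are manufactured to have finite negative mean even when $\xi$ does not.
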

\begin{remark}
 We  mention that the case when the mean is $-\infty$ together with other problems will be treated in a subsequent study as it demands techniques different from the spirit of this paper.
\end{remark}
The result in Theorem \ref{MainTheorem} can be looked at from another perspective. Let us have two subordinators with \LL measures $\mu_{\pm}$ such that $\mu_+ \in \mathcal{ P},\: k_+>0$ and $\mu_-  \in \mathcal{ P}$. Then according to Vigon's theory of philanthropy, see \cite{Vigon}, we can construct a process $\xi$ such that its ladder height processes have exponents as in \eqref{L-KLadder} and \eqref{L-KLadder1} and hence $\xi$ satisfies the conditions of Theorem \ref{MainTheorem}. Therefore we will be able to synthesize examples starting from the building blocks, i.e. the ladder height processes. We state this as a separate result.
\begin{corollary}\label{CorollaryMain}
Let $\mu_{\pm}$ be the \LL measures of two subordinators and $\mu_+ \in \mathcal{ P},\: k_+>0$ and $\mu_-  \in \mathcal{ P}$. Then there exists a \LLP which drifts to $-\infty$ whose ascending and descending ladder height processes have the Laplace exponents respectively \eqref{L-KLadder} and \eqref{L-KLadder1}. Then all the claims of Theorem \ref{MainTheorem} hold and in particular we have the factorization \eqref{MainAssertion}.
\end{corollary}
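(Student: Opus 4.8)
The plan is to reduce the corollary to \refthm{MainTheorem}: starting from the two prescribed subordinators, one first synthesizes a L\'evy process $\xi$ having them as its ascending and descending ladder height processes, and then one verifies that this $\xi$ falls under hypothesis ($\mathcal{H}_2$), so that the theorem applies and delivers both the companion process $Y$ and the factorization \eqref{MainAssertion}.

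\emph{Step 1 (construction of $\xi$).} Write $\overline{\mu}_{\pm}(x)=\mu_{\pm}(x,\infty)$. Since $\mu_{\pm}\in\mathcal{P}$, each tail $\overline{\mu}_{\pm}$ is the primitive of a non-increasing density, hence has a non-decreasing derivative and is therefore convex on $(0,\infty)$; consequently the subordinators with Laplace exponents $\phi_+$ and $\phi_-$ of \eqref{L-KLadder}--\eqref{L-KLadder1} are philanthropists in the sense of Vigon. I would then invoke Vigon's theory of philanthropy \cite{Vigon} — concretely, the solvability of the \emph{\'equation amicale invers\'ee}, which applies as soon as one of the two subordinators is a philanthropist — to obtain a L\'evy process $\xi$ whose ascending (resp.\ descending) ladder height process has Laplace exponent $\phi_+$ (resp.\ $\phi_-$), with the given drifts $\delta_{\pm}$ and killing rate $k_+$; its L\'evy--Khintchine exponent is then $\Psi=-\phi_+\phi_-$, consistent with the normalization $c=1$ in \eqref{eq:wh}. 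Because $\phi_-$ carries no killing term while $k_+>0$, the ascending ladder height process of $\xi$ is a killed subordinator and the descending one is proper, so by the classical classification of the long-time behaviour of a L\'evy process in terms of its ladder processes (see \cite{Bertoin-96, Doney}) the process $\xi$ drifts to $-\infty$.

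\emph{Step 2 (conclusion).} The process $\xi$ built in Step 1 drifts to $-\infty$ and has ladder height characteristics exactly as in \eqref{L-KLadder}--\eqref{L-KLadder1}; moreover $\mu_+\in\mathcal{P}$, $k_+>0$ and $\mu_-\in\mathcal{P}$ by assumption, i.e.\ condition \textbf{P${}_{\pm}$} is in force, which is precisely hypothesis ($\mathcal{H}_2$). Applying \refthm{MainTheorem} to this $\xi$ then furnishes a spectrally positive L\'evy process $Y$ with negative mean and Laplace exponent $\psi_+$ as in \eqref{Phi}, together with the identity in law \eqref{MainAssertion}, which is exactly what the corollary asserts.

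I expect the only genuinely delicate point to be Step 1: one must make sure that Vigon's construction can be carried out with a strictly positive killing rate $k_+$ on the ascending side — so that the synthesized $\xi$ really drifts to $-\infty$ instead of oscillating or drifting to $+\infty$ — and that the induced positive and negative L\'evy measures of $\xi$ are genuine L\'evy measures; both are guaranteed here by the convexity of the tails $\overline{\mu}_{\pm}$ together with $k_+>0$. Once $\xi$ is available, Step 2 is an immediate citation, since the entire analytic substance of the corollary is carried by \refthm{MainTheorem}.
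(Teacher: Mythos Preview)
Your proposal is correct and follows essentially the same approach as the paper: invoke Vigon's philanthropy theory to construct a L\'evy process $\xi$ with the prescribed ladder height processes (the condition $\mu_{\pm}\in\mathcal{P}$ being exactly what is needed), observe that $k_+>0$ forces $\xi$ to drift to $-\infty$, and then apply \refthm{MainTheorem} under hypothesis ($\mathcal{H}_2$). The paper gives no separate proof of the corollary beyond the explanatory paragraph preceding its statement together with the opening lines of the case ${\bf P}_{\pm}$ in Section~\ref{proof:mt}, which your two steps reproduce faithfully (indeed with slightly more detail on the philanthropy side).
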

 We postpone the proof of the Theorem to the Section \ref{proof:mt}. In the next section, we provide some interesting consequences  whose proofs will be given in Section \ref{proof_cons}.
Finally, in Section \ref{O-U}, we state and prove several  results concerning some generalized Ornstein-Uhlenbeck processes. They will be useful for our main proof and since they have an independent interest, we present them in a separate section.

\section{Some consequences of Theorem \ref{MainTheorem}}\label{SecMain}
Theorem \ref{MainTheorem} allows for a multiple of applications. In this section we discuss only a small part of them but we wish to note that almost all results that have been obtained in the literature under restrictions on all jumps of $\xi$ can now be strengthened by imposing conditions only on positive jumps.
This is due to \eqref{MainAssertion} and the fact that on the right-hand side of the identity the law of the exponential functionals has been determined by its integral moments which admit some simple expressions, see Propositions \ref{prop:ms} and \ref{prop:msp}  below.

The factorization allows us to derive some interesting distributional properties. For instance, we can show that  the random variable ${\rm{I}}_{\xi}$ is unimodal  for a large class of L\'evy processes. We recall that a positive random variable (or its distribution function) is said to be unimodal if  there exists $a \in \mathbb{R}^+$, the mode, such that its distribution function $F(x)$ and the function $1-F(x)$ are convex respectively on $(0, a)$ and $(a,+\infty)$. It can be easily shown, see e.g.~\cite{Rivero-05}, that the random variable ${\rm{I}}_{Y}$, as defined in Theorem \ref{MainTheorem}, is self-decomposable and thus, in particular, unimodal. It is natural to ask whether this property is preserved or not for ${\rm{I}}_{\xi}$. We emphasize that this is not necessarily true even if ${\rm{I}}_{H^-}$ is unimodal itself. Cuculescu and Theodorescu \cite{Cuculescu} provide a criterion for a positive random variable to be multiplicative strongly unimodal (for short MSU), that is, its product with any independent unimodal random variable remains unimodal. More precisely, they show that either the random variable has a unique mode at $0$ and the independent product with any random variable has also an unique mode at $0$ or the law of the positive random variable is absolutely continuous with a density $m$ having the property that the mapping $x\rightarrow \log m(e^x)$ is concave on $\R$. We also point out that it is easily seen that the MSU
property remains unchanged under rescaling and power transformations and we refer to the recent paper \cite{Simon} for more information about this class of random variables.

We proceed by recalling that as a general result on the exponential functional Bertoin et al.~\cite[Theorem 3.9]{Bertoin-Lindner-07} have shown that the law of ${\rm{I}}_{\xi}$ is absolutely continuous  with a density which we denote throughout by  $m_{\xi}$.

In what follows, we show that when $\xi$ is a spectrally negative L\'evy process (i.e.~$\Pi(dy)\mathbb{I}_{\{y>0\}}\equiv 0$ in \eqref{Levy-K} and $\xi$ is not the negative of a subordinator), we recover the power series representation obtained by the second author in \cite{Patie-abs-08} for the density of ${\rm{I}}_{\xi}$. We are now ready to state the first consequence of our main factorization.
\begin{corollary}\label{Corollary1}
Let $\xi$ be a spectrally negative L\'evy process with a negative mean.
\begin{enumerate}
\item Then, we have the following factorization
\begin{equation}\label{SpecNeg}
{\rm{I}}_{\xi}\stackrel{d}={\rm{I}}_{H^{-}} \times G^{-1}_{\gamma},
\end{equation}
where $G_{\gamma}$ is a Gamma random variable of parameter $\gamma>0$, where $\gamma>0$ satisfies the relation $\Psi(\gamma)=0$. Consequently, if ${\rm{I}}_{H^-}$ is unimodal then ${\rm{I}}_{\xi}$ is unimodal.
\item The density function of ${\rm{I}}_{\xi}$ has the form
\begin{equation}\label{SpecNeg1}
m_{\xi}(x)=\frac{x^{-\gamma-1}}{\Gamma(\gamma)}\int_{0}^{\infty}e^{-y/x}y^{\gamma}m_{H^-}(y)dy, \: x>0,
\end{equation}
where $\Gamma$ stands for the Gamma function. In particular, we have
\[\lim_{x\rightarrow \infty }x^{\gamma+1}m_{\xi}(x) = \frac{\E[{\rm{I}}_{H^-}^{\gamma}]}{\Gamma(\gamma)}. \]
\item  Moreover, for any $1/x<\lim_{s\rightarrow \infty}\frac{\Psi(s)}{s}$,
 \begin{eqnarray}
m_{{\xi}}(x)
&=&\frac{\E[{\rm{I}}_{H^-}^{\gamma}]}{\Gamma(\gamma)\Gamma(\gamma+1)}x^{-\gamma-1}\sum_{n=0}^{\infty}(-1)^n \frac{\Gamma(n+\gamma+1)}{\prod_{k=1}^{n}\Psi(k+\gamma)}x^{-n}.
\end{eqnarray}
\item Finally, for any $\beta\geq\gamma+1$, the mapping $x\mapsto x^{-\beta}m_{\xi}(x^{-1})$ is completely monotone on $\R^+$, and, consequently, the law of the random variable ${\rm{I}}^{-1}_{\xi}$ is infinitely divisible with a decreasing density whenever $\gamma \leq 1$.
\end{enumerate}
\end{corollary}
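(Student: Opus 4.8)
I would deduce all four assertions from \refthm{MainTheorem}, so the first step is to verify its hypotheses and pin down the companion process $Y$. Since $\xi$ is spectrally negative and not the negative of a subordinator it has no positive jumps, hence its ascending ladder height process is a pure unit drift, killed at an independent exponential time because $\xi\to-\infty$; thus $\mu_+\equiv 0$ and, in the standard normalization (compatible with \eqref{eq:wh}), $\phi_+(z)=z-\gamma$, where — since $\phi_-$ in \eqref{L-KLadder1} is the exponent of a proper subordinator run backwards and so vanishes on $[0,\infty)$ only at $0$ — the number $\gamma>0$ is precisely the unique positive root of $\Psi$. In particular $(\mathcal{H}_1)$ holds (case $\mathbf{E}_+$: $\mu_+\equiv 0\in\mathcal{P}$, and a spectrally negative $\xi$ has $|\Psi(z)|<\infty$ for all $\Re(z)\ge 0$), while $\E[\xi_1]>-\infty$ is the standing assumption. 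By \eqref{Phi} we then get $\psi_+(-s)=s^2+\gamma s$, i.e. $Y$ is a Brownian motion with variance $2$ and drift $-\gamma$, so Dufresne's identity gives ${\rm{I}}_{Y}\stackrel{d}=G_\gamma^{-1}$; combined with \eqref{MainAssertion} this is \eqref{SpecNeg}. For the unimodality statement I would invoke the Cuculescu--Theodorescu criterion recalled above: $G_\gamma^{-1}$ has the inverse-gamma density $m(x)=x^{-\gamma-1}e^{-1/x}/\Gamma(\gamma)$, and $x\mapsto\log m(e^x)=-(\gamma+1)x-e^{-x}-\log\Gamma(\gamma)$ is concave on $\R$, so $G_\gamma^{-1}$ is multiplicatively strongly unimodal, whence its independent product with ${\rm{I}}_{H^-}$ is unimodal as soon as ${\rm{I}}_{H^-}$ is.

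For (2), writing $m_{H^-}$ for the density of ${\rm{I}}_{H^-}$ (it exists unless $\xi$ is Brownian with drift, a degenerate case in which \eqref{SpecNeg} is already fully explicit) and using the inverse-gamma density above, the density of the product ${\rm{I}}_{H^-}\times G_\gamma^{-1}$ is
\[
m_\xi(x)=\int_0^\infty m_{H^-}(y)\,\frac{(x/y)^{-\gamma-1}e^{-y/x}}{\Gamma(\gamma)}\,\frac{dy}{y}=\frac{x^{-\gamma-1}}{\Gamma(\gamma)}\int_0^\infty e^{-y/x}y^\gamma m_{H^-}(y)\,dy,
\]
which is \eqref{SpecNeg1}; the asymptotics then follow by monotone convergence ($e^{-y/x}\uparrow 1$ as $x\to\infty$) once one notes that $\E[{\rm{I}}_{H^-}^\gamma]=\int_0^\infty y^\gamma m_{H^-}(y)\,dy<\infty$, all positive moments of the exponential functional of minus a subordinator being finite.

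For (3) the plan is to insert $e^{-y/x}=\sum_{n\ge 0}(-1)^n y^n/(n!\,x^n)$ into \eqref{SpecNeg1} and integrate term by term, which yields $m_\xi(x)=\frac{x^{-\gamma-1}}{\Gamma(\gamma)}\sum_{n\ge 0}(-1)^n\E[{\rm{I}}_{H^-}^{n+\gamma}]/(n!\,x^n)$, and then to unwind the moments through the recursion $\E[{\rm{I}}_{H^-}^{s}]=\frac{s}{-\phi_-(s)}\E[{\rm{I}}_{H^-}^{s-1}]$ together with $-\phi_-(z)=\Psi(z)/\phi_+(z)=\Psi(z)/(z-\gamma)$ coming from \eqref{eq:wh}: since $-\phi_-(k+\gamma)=\Psi(k+\gamma)/k$, iterating gives $\E[{\rm{I}}_{H^-}^{n+\gamma}]=\E[{\rm{I}}_{H^-}^\gamma]\,n!\,\Gamma(n+\gamma+1)/\bigl(\Gamma(\gamma+1)\prod_{k=1}^n\Psi(k+\gamma)\bigr)$, and substitution produces exactly the stated series. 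The \emph{one genuinely delicate point}, and the step I expect to cost the most effort, is to justify the interchange of sum and integral and to identify where it is valid: by the ratio test (using $\Psi(s)/s\to\lim_{s\to\infty}\Psi(s)/s$), the series $\sum_n\Gamma(n+\gamma+1)x^{-n}/\prod_{k=1}^n\Psi(k+\gamma)$ has radius of convergence in $x^{-1}$ equal to $\lim_{s\to\infty}\Psi(s)/s$, so it converges precisely when $1/x<\lim_{s\to\infty}\Psi(s)/s$, and on that range the same estimate makes $\sum_n\E[{\rm{I}}_{H^-}^{n+\gamma}]/(n!\,x^n)$ finite, legitimising Fubini.

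For (4), replacing $x$ by $1/x$ in \eqref{SpecNeg1} gives $x^{-\beta}m_\xi(1/x)=\frac{x^{\gamma+1-\beta}}{\Gamma(\gamma)}\int_0^\infty e^{-xy}y^\gamma m_{H^-}(y)\,dy$. The integral is the Laplace transform of the finite positive measure $y^\gamma m_{H^-}(y)\,dy$, hence completely monotone (Bernstein's theorem), while $x\mapsto x^{\gamma+1-\beta}=x^{-(\beta-\gamma-1)}$ is completely monotone for every $\beta\ge\gamma+1$; since a product of completely monotone functions is completely monotone, $x\mapsto x^{-\beta}m_\xi(1/x)$ is completely monotone for all such $\beta$. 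Taking $\beta=2$, permissible once $\gamma\le 1$, the density $x\mapsto x^{-2}m_\xi(1/x)$ of ${\rm{I}}_\xi^{-1}$ is completely monotone, hence non-increasing, and, being a scale mixture of exponential densities, the density of an infinitely divisible law by the classical result of Goldie and Steutel; this gives the final claim. Apart from the Fubini step in (3) and the bookkeeping around the local-time normalization of the ladder height processes, the whole argument is a mechanical combination of \refthm{MainTheorem}, Dufresne's identity, and standard facts about completely monotone and multiplicatively strongly unimodal densities.
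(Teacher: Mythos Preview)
Your proposal is correct and follows essentially the same route as the paper. Two small deviations worth noting: the paper invokes \refthm{MainTheorem} under $(\mathcal{H}_2)$ (checking $\mu_-\in\mathcal{P}$ as well) rather than your cleaner choice of $\mathbf{E}_+$, and in part~(3) it computes $\E[{\rm I}_{H^-}^{n+\gamma}]$ via the Chazal--Kyprianou--Patie shift $\phi_\beta(s)=\frac{s}{s+\beta}\phi_-(s+\beta)$ combined with the integer-moment formula \eqref{eq:ms}, whereas you iterate the functional equation \eqref{Maulik} directly at the non-integer arguments $\gamma+1,\dots,\gamma+n$; both paths lead to the same series and the same dominated-convergence justification.
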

\begin{remark}
\begin{enumerate}
\item From \cite[Corollary VII.5]{Bertoin-96} we get that
\begin{equation*} 
\lim_{s\rightarrow \infty} \frac{\Psi(s)}{s}= \begin{cases}
b-\int_{-1}^0 y \Pi(dy) & \textrm{ if } \sigma=0 \textrm{ and } \int_{-\infty}^0 (1 \wedge y)\Pi(dy)<\infty,\\
+\infty & \textrm{ otherwise.}
\end{cases}
\end{equation*}
 Since we excluded the degenerate cases, we easily check that $b-\int_{-1}^0 y\Pi(dy)>0$.
\item We point out that in \cite{Patie-abs-08}, it is proved that   the density extends to a function  of a complex variable which is analytical on the entire complex plane cut along the negative real axis and admits a power series representation for all $x>0$.
\end{enumerate}
\end{remark}
To illustrate the results above, we consider  $\Psi(s)=-(s-\gamma)\phi_{-}(s),s>0$,  with $\gamma>0$, and where for any $\alpha \in (0,1)$,
\begin{eqnarray} \label{eq:dpa}
-\phi_{-}(s)&=& s\frac{\Gamma(\alpha(s-1)+1)}{\Gamma(\alpha s+1)}\\
&=&\int_0^{\infty}(1-e^{-sy})\frac{(1-\alpha)e^{y/\alpha}}{\alpha \Gamma(\alpha+1)(e^{y/\alpha}-1)^{2-\alpha}}dy=\int_0^{\infty}(1-e^{-sy})\pi_{\alpha}(y)dy \nonumber
\end{eqnarray}
is the Laplace exponent of  a subordinator. Observing  that the density $\pi_{\alpha}(y)$ of the L\'evy measure of $\phi_{-}$ is decreasing, we readily check that $\Psi$ is the Laplace exponent of a spectrally negative L\'evy process.   Next, using the identity ${\rm{I}}_{H^-}\stackrel{(d)}{=}{G_1}^{\alpha}$, see e.g.~\cite{Patie-aff},  we get
\[{\rm{I}}_{\xi}\stackrel{(d)}{=}{G_1}^{\alpha} \times G^{-1}_{\gamma}\]
which, after some easy computations, yields, for any $x>0$,
 \begin{eqnarray}
m_{{\xi}}(x)
&=&\frac{x^{-\gamma-1}}{\Gamma(\gamma)\Gamma(\gamma+1)}\sum_{n=0}^{\infty}\Gamma(\alpha( n+\gamma)+1)\frac{(-x)^{-n}}{n!}\\
&=& \frac{ \Gamma(\alpha \gamma +1) x^{-\gamma-1}}{\Gamma(\gamma)\Gamma(\gamma+1)} {}_1F_0((\alpha,\alpha \gamma+1); -x^{-1}),
\end{eqnarray}
where ${}_1F_0$ stands for the so-called Wright hypergeometric function, see e.g.~\cite[Section 12.1]{Braaksma-64}. Finally, since ${G_1}^{\alpha}$ is unimodal, we deduce that ${\rm{I}}_{\xi}$ is unimodal. Actually, we have a stronger result in this case since ${\rm{I}}_{\xi}$ is itself MSU being the product of two independent MSU random variables, showing in particular that the mapping $x\mapsto {}_1F_0((\alpha,\alpha \gamma+1);e^{x})$ is log-concave on $\R$ for any $\alpha \in (0,1)$ and $\gamma>0$.

We now turn to the second application as an illustration of the situation ${\bf{P +}}$ of Theorem \ref{MainTheorem}. We would like to emphasize that in this case in general we do not require the existence of positive exponential moments. We are not aware of general examples that work without such a restriction as \eqref{Maulik} is always crucially used and it is of real help once it is satisfied on a strip.

\begin{corollary}\label{Corollary2}
Let $\xi$ be a \LLP with $-\infty<\E[\xi_{1}]<0$ and $\sigma^{2}>0$. Moreover assume that
\[\Pi(dy)\mathbb{I}_{\{y>0\}}=c \lambda e^{-\lambda y}dy,\]
where $c,\lambda>0$. Then, we have, for any $s>-\lambda$,
\begin{eqnarray*}
 \psi_+(-s)
&=&\delta_+ s^2+k_+s + c_- \frac{s^{2}}{\lambda+s},
\end{eqnarray*}
where $c_-=c/\phi_-(\lambda)$ and $\delta_+>0$. Consequently, the self-decomposable random variable ${\rm{I}}_{Y}$ admits the following factorization
\begin{equation}\label{eq:hy}
{\rm{I}}_{Y}\stackrel{d}=\delta_+ G^{-1}_{\theta_2} \times B^{-1}(\theta_1,\lambda-\theta_1),
\end{equation}
where  $0<\theta_1<\lambda<\theta_2$ are the two positive  roots of the equation  $\psi_+(s)=0$ and $B$ stands for a Beta random variable. Then, assuming that $\theta_2-\theta_1$ is not an integer, we have, for any $1/x<\lim_{s\rightarrow \infty} |\phi_-(s)|$,
\begin{eqnarray*}
m_{\xi}(x)&=&\frac{k_+\Gamma(\lambda+1)x^{-1}}{  \Gamma(\theta_1+1)\Gamma(\theta_2+1)}\left(\sum_{i=1}^2\frac{\E\left[{\rm{I}}_{H^-}^{\theta_i}\right]}{\Gamma(\theta_i+1)}x^{-\theta_i}\mathcal{I}_{\phi_-,i}(\theta_i+1;-x^{-1})\right),
\end{eqnarray*}
where
\begin{eqnarray}
 \mathcal{I}_{\phi_-,i}(\theta_i+1;x) &=&\sum_{n=0}^{\infty}a_{n}(\phi_-,\theta_i)\frac{x^{n}}{n!}
 \end{eqnarray}
and $a_n(\phi_-,\theta_i)= \prod_{\stackrel{j=1}{j\neq i}}^2\frac{\Gamma(\theta_j-\theta_i-n)}{\Gamma(\lambda-\theta_i-n)}\frac{\Gamma(n+\theta_i+1)}{\prod_{k=1}^{n}\phi_-(k+\theta_i)},\: i=1,2$.
\end{corollary}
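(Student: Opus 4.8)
The plan is to plug the explicit data of this example into Theorem~\ref{MainTheorem} and then make both factors of \eqref{MainAssertion} completely explicit. Theorem~\ref{MainTheorem} applies: $-\infty<\E[\xi_1]<0$ is assumed, and $\Pi(\d y)\mathbb{I}_{\{y>0\}}=c\lambda e^{-\lambda y}\d y$ has a non-increasing density, so $\Pi_+\in\mathcal{P}$ and hypothesis $(\mathcal{H}_1)$ holds through the case {\bf P+}; hence ${\rm{I}}_\xi\stackrel{d}={\rm{I}}_{H^-}\times{\rm{I}}_Y$.

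The first step is to identify the ascending ladder height and thus $\psi_+$. Since $\sigma^2>0$, $\xi$ creeps upwards, so the drift of $H^+$ is $\delta_+>0$. For its L\'evy measure I would use Vigon's \emph{\'equation amicale invers\'ee}, $\mu_+(x,\infty)=\int_{[0,\infty)}\PPp(x+y)\,V(\d y)$, with $V$ the potential measure of the descending ladder height subordinator and $\PPp(z)=\Pi_+(z,\infty)=ce^{-\lambda z}$. Because $\PPp(x+y)=ce^{-\lambda x}e^{-\lambda y}$ factorizes, $\mu_+(x,\infty)=c_-e^{-\lambda x}$ with $c_-=c\int_{[0,\infty)}e^{-\lambda y}V(\d y)$, and the last integral is the reciprocal of the Laplace exponent of that subordinator, giving $c_-=c/(-\phi_-(\lambda))>0$ (the same value is obtained by matching the residue at the pole $z=-\lambda$ on the two sides of $\Psi(z)=-\phi_+(z)\phi_-(z)$, once the contribution $-cz/(\lambda+z)$ of the exponential positive jumps to $\Psi$ is isolated). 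Substituting $\mu_+(x,\infty)=c_-e^{-\lambda x}$ into \eqref{Phi} and using $\int_0^\infty e^{-(s+\lambda)y}\d y=(s+\lambda)^{-1}$ yields $\psi_+(-s)=\delta_+s^2+k_+s+c_-s^2/(\lambda+s)$ for $s>-\lambda$. Clearing the denominator writes $\psi_+(-s)=\delta_+\,s(s+\theta_1)(s+\theta_2)/(s+\lambda)$, where $\theta_1,\theta_2$ are the roots of $q(s):=\delta_+s^2-(\delta_+\lambda+k_++c_-)s+k_+\lambda$; since $q(0)=k_+\lambda>0$, $q(\lambda)=-c_-\lambda<0$ and $q$ is convex, $0<\theta_1<\lambda<\theta_2$, and these are exactly the two positive zeros of $\psi_+$.

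To identify ${\rm{I}}_Y$ I would compute its negative integer moments from the functional equation \eqref{Maulik} (equivalently Proposition~\ref{prop:msp}) for exponential functionals of spectrally positive L\'evy processes: $\E[{\rm{I}}_Y^{-n}]$ equals $-\E[Y_1]$ times the product of $\psi_+(-j)$ over $1\le j\le n-1$, divided by $(n-1)!$. Substituting the factorized form of $\psi_+$ turns this product into $\Gamma(n+\theta_1)\Gamma(n+\theta_2)/\Gamma(n+\lambda)$ times an $n$-independent constant, and a direct computation with the moments of Gamma and Beta random variables together with the beta--gamma algebra shows that this is exactly the $n$-th negative moment of the variable in \eqref{eq:hy}. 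Since ${\rm{I}}_Y$ is determined by its integer moments (they grow only factorially, so Carleman's criterion applies), \eqref{eq:hy} follows; self-decomposability of ${\rm{I}}_Y$ is the general fact recalled before Corollary~\ref{Corollary1}.

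Finally, for the density I would use Mellin transforms: $\mathcal M_{{\rm{I}}_\xi}(s):=\E[{\rm{I}}_\xi^{s-1}]=\E[{\rm{I}}_{H^-}^{s-1}]\,\E[{\rm{I}}_Y^{s-1}]$ by the factorization, and by \eqref{eq:hy} the second factor is, up to an $s$-independent constant, a power of $\delta_+$ times $\Gamma(\theta_1-s+1)\Gamma(\theta_2-s+1)/\Gamma(\lambda-s+1)$. Inverting $m_\xi(x)=\frac1{2\pi i}\int_{(c)}\mathcal M_{{\rm{I}}_\xi}(s)x^{-s}\,\d s$ and pushing the contour to the right, the only singularities crossed are the simple poles of $\Gamma(\theta_i-s+1)$ at $s=\theta_i+1+n$ ($i=1,2$, $n\geq0$), because $s\mapsto\E[{\rm{I}}_{H^-}^{s-1}]$ is analytic for $\Re(s)>0$; simplicity (hence no logarithmic terms) is where $\theta_2-\theta_1\notin\Z$ enters. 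The residue at $s=\theta_i+1+n$ is a multiple of $\E[{\rm{I}}_{H^-}^{\theta_i+n}]\,x^{-1-\theta_i-n}$, and using the recursion $\E[{\rm{I}}_{H^-}^{\theta_i+n}]/\E[{\rm{I}}_{H^-}^{\theta_i}]=\Gamma(n+\theta_i+1)/(\Gamma(\theta_i+1)\prod_{k=1}^n\phi_-(k+\theta_i))$ for the exponential functional of a subordinator, together with the values $\Gamma(\theta_j-\theta_i-n)$ and $\Gamma(\lambda-\theta_i-n)^{-1}$ of the remaining Gamma factors at $s=\theta_i+1+n$, summation over $n$ reproduces the series $\mathcal I_{\phi_-,i}$ with coefficients $a_n(\phi_-,\theta_i)$; the series converges for $1/x<\lim_{s\to\infty}|\phi_-(s)|$, the radius being set by the growth of $\E[{\rm{I}}_{H^-}^{s-1}]$. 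I expect the main obstacle to be precisely this last step: justifying the contour shift (controlling the decay of $\mathcal M_{{\rm{I}}_\xi}$ on vertical lines, where the rapidly decaying Barnes-type $\Gamma$-ratio must beat the growth of the Mellin transform of ${\rm{I}}_{H^-}$) and proving convergence of the resulting doubly-indexed residue series in the stated range of $x$; a secondary delicate point is the residue/Wiener--Hopf bookkeeping identifying $c_-$ in the first step.
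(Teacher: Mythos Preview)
Your proposal is correct and follows essentially the same route as the paper: Vigon's \emph{\'equation amicale invers\'ee} to obtain $\bar\mu_+(x)=c_-e^{-\lambda x}$, substitution into \eqref{Phi} and factorization of $\psi_+$ to locate $0<\theta_1<\lambda<\theta_2$, identification of ${\rm I}_Y$ via its negative integer moments from Proposition~\ref{prop:msp}, and Mellin inversion for the density. The only organizational difference is that the paper first inverts the Mellin transform of ${\rm I}_Y$ alone (citing \cite{Paris}) to get the explicit series \eqref{Cor.2.4} for $m_Y$, and then obtains $m_\xi$ by multiplicative convolution with $m_{H^-}$ exactly as in the proof of Corollary~\ref{Corollary1} (expanding term by term and using the shifted moments \eqref{eq:tbs}), whereas you invert the full product $\E[{\rm I}_{H^-}^{s-1}]\E[{\rm I}_Y^{s-1}]$ in a single residue computation; the two calculations are equivalent and your flagged ``delicate step'' is precisely what the paper outsources to \cite{Paris} and to the argument of Corollary~\ref{Corollary1}.
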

\begin{remark}
The assumption  $\sigma^{2}>0$, as well as the restriction on $\theta_2-\theta_1$,  have been made in order to avoid dealing with different cases but they can both be easily removed. The latter will affect the series expansion \eqref{Cor.2.4}. The computation is easy but lengthy and we leave it out.
\end{remark}
\begin{remark}
The methodology and results we present here can also be extended to the case when the \LL measure $\Pi(dy)\mathbb{I}_{\{y>0\}}$ is a mixture of exponentials as  in \cite{Gai-Kou-11} and \cite{Kuznetsov-Pardo-11} but we note that here we have no restrictions on the negative jumps whatsoever.
\end{remark}
We now provide an example of Theorem \ref{MainTheorem} in the situation ${\bf{P}_{\pm}}$.
\begin{corollary}\label{Corollary3}
For any $\alpha \in (0,1)$, let us set
\begin{eqnarray}
\Psi(z)= \frac{\alpha z \Gamma(\alpha (-z+1)+1)}{(1-z)\Gamma(-\alpha   z+1)} \phi_+(z), \:  z \in i\R,
\end{eqnarray}
where $\phi_+$ is as in \eqref{L-KLadder} with  $\mu_+ \in \mathcal{P}, k_+>0$.   Then $\Psi$ is the Laplace exponent of a L\'evy process $\xi$ which drifts to $-\infty$. Moreover, the density of ${\rm{I}}_{\xi}$ admits the following representation
 \begin{eqnarray} \label{eq:dis}
m_{{\xi}}(x)
&=&\frac{x^{-1/\alpha}}{\alpha}\int_0^{\infty}g_{\alpha}\left(\left(y/x\right)^{1/\alpha}\right)m_{Y}(y)y^{1/\alpha-1}dy,\: x>0,
\end{eqnarray}
where $g_{\alpha}$ is the density of a positive $\alpha$-stable random variable. Furthermore, if $\lim_{s\to \infty}s^{\alpha-1}\phi_+(-s)=0$, then  for all $x>0$,
 \begin{eqnarray}
m_{{\xi}}(x)
&=&\frac{k_+}{\alpha}\sum_{n=1}^{\infty} \frac{\prod_{k=1}^{n}\phi_+(-k)}{\Gamma(-\alpha n) n!}x^{n}.
\end{eqnarray}
Finally, the  positive random variable ${\rm{I}}_{H^-}$ is MSU  if and only if  $\alpha\leq 1/2$. Hence ${\rm{I}}_{\xi}$ is unimodal for any $\alpha\leq 1/2$.
\end{corollary}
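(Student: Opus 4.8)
The plan is to extract every assertion from \refthm{MainTheorem}, so the first task is to recognise the process defined by the displayed $\Psi$ as one that it governs. Reading the Wiener--Hopf factorization \eqref{eq:wh} in the form $\Psi=-\phi_+\phi_-$, the prescribed expression identifies the descending ladder exponent as $-\phi_-(z)=\frac{\alpha z\,\Gamma(\alpha(1-z)+1)}{(1-z)\,\Gamma(1-\alpha z)}$, so the first step is to verify that this is the Laplace exponent of a proper subordinator whose \LL measure lies in the class $\mathcal P$ of \refprop{Definition}. I would do this by exhibiting it as a (complete) Bernstein function together with an explicit formula for $\mu_-$, the non-increasing character of the density reducing to standard monotonicity properties of quotients of Gamma functions and of the positive $\alpha$-stable density. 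Granting this and using the hypotheses $\mu_+\in\mathcal P$, $k_+>0$ on the given $\phi_+$, the pair $(\phi_+,\phi_-)$ satisfies $\mathbf P_{\pm}$; Corollary~\ref{CorollaryMain} (resting on Vigon's philanthropy \cite{Vigon}) then produces a \LLP $\xi$ drifting to $-\infty$ with exactly these ladder exponents, so $(\mathcal H_2)$ holds and \refthm{MainTheorem} gives ${\rm{I}}_{\xi}\stackrel{d}{=}{\rm{I}}_{H^-}\times{\rm{I}}_{Y}$ with $\psi_+$ as in \eqref{Phi}.

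The identity \eqref{eq:dis} then amounts to pinning down the law of ${\rm{I}}_{H^-}$. By the moment identity for exponential functionals of the negative of a subordinator (\refprop{prop:ms}), $\E[{\rm{I}}_{H^-}^n]=n!/\prod_{k=1}^{n}(-\phi_-(k))$, and inserting the explicit $\phi_-$ this product telescopes into a quotient of Gamma functions which I would recognise as the moment sequence of a (fixed) power of a positive $\alpha$-stable random variable with density $g_\alpha$; since these moments grow slowly enough for Carleman's criterion, the uniqueness argument of \cite{Carmona-Petit-Yor-97} determines the law of ${\rm{I}}_{H^-}$. Writing out the corresponding density in terms of $g_\alpha$, substituting it into the Mellin convolution that represents the independent product ${\rm{I}}_{H^-}\times{\rm{I}}_{Y}$, and changing variables $u=(y/x)^{1/\alpha}$ produces \eqref{eq:dis} (that $m_\xi$ exists at all being \cite{Bertoin-Lindner-07}).

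For the power series, assume in addition $\lim_{s\to\infty}s^{\alpha-1}\phi_+(-s)=0$. The Mellin transform of $m_\xi$ factorizes as $\E[{\rm{I}}_\xi^{s-1}]=\E[{\rm{I}}_{H^-}^{s-1}]\,\E[{\rm{I}}_{Y}^{s-1}]$; the first factor is the explicit Gamma quotient coming from the stable law, while by \refprop{prop:msp} together with \eqref{Phi} the negative integer moments of ${\rm{I}}_{Y}$ are built from the products $\prod_{k=1}^{n}\psi_+(-k)=\prod_{k=1}^{n}(-k)\phi_+(-k)$. Inverting the Mellin transform by displacing the integration contour to $+\infty$ and summing the residues at the poles lying in the right half-plane yields $m_\xi(x)=\frac{k_+}{\alpha}\sum_{n\ge 1}\frac{\prod_{k=1}^{n}\phi_+(-k)}{\Gamma(-\alpha n)\,n!}\,x^n$, the factor $1/\Gamma(-\alpha n)$ arising from the reflection formula applied to the stable Mellin factor; the growth hypothesis on $\phi_+(-s)$ is precisely what legitimises the contour displacement and secures the convergence of the series for every $x>0$.

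Finally, for the unimodality statements, ${\rm{I}}_{Y}$ is self-decomposable, hence unimodal \cite{Rivero-05}. By the Cuculescu--Theodorescu criterion \cite{Cuculescu}, ${\rm{I}}_{H^-}$ is MSU precisely when $x\mapsto\log m_{H^-}(e^x)$ is concave on $\R$; since $m_{H^-}$ is explicit in terms of $g_\alpha$ and the MSU property is preserved under power transformations, this reduces to the known dichotomy that a positive $\alpha$-stable random variable is MSU if and only if $\alpha\le 1/2$ (see \cite{Simon}). Hence for $\alpha\le 1/2$ the product ${\rm{I}}_\xi\stackrel{d}{=}{\rm{I}}_{H^-}\times{\rm{I}}_{Y}$ of an MSU random variable with a unimodal one is unimodal, while for $\alpha>1/2$ one exhibits a point at which $(\log m_{H^-}(e^{\cdot}))''$ changes sign, destroying the MSU property. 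The two genuinely delicate points I anticipate are Step~1 — confirming that the Gamma quotient really is a descending ladder exponent with $\mu_-\in\mathcal P$, together with the precise identification of the law of ${\rm{I}}_{H^-}$ — and the sign analysis of $(\log m_{H^-}(e^{\cdot}))''$ behind the ``only if'' half of the MSU assertion; once \refthm{MainTheorem} is in force, everything else is routine bookkeeping with Mellin transforms.
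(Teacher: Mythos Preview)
Your overall architecture matches the paper's: identify $\phi_-$, check $\mu_-\in\mathcal P$, invoke Corollary~\ref{CorollaryMain} and Theorem~\ref{MainTheorem}, identify the law of ${\rm I}_{H^-}$ in terms of the positive $\alpha$-stable density $g_\alpha$, read off \eqref{eq:dis} from the product convolution, and close with Simon's MSU dichotomy. Two points of departure are worth flagging.

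\emph{Identification of ${\rm I}_{H^-}$.} You propose to recognise the moment sequence $n!/\prod_{k=1}^n(-\phi_-(k))$ as that of a power of a stable variable. The paper takes a shortcut: it writes $\phi_-(-s)=\frac{-s}{-s+1}\tilde\phi_-(-s+1)$ for $\tilde\phi_-(-s)=\alpha\Gamma(\alpha s+1)/\Gamma(\alpha(s-1)+1)$, quotes the known identity ${\rm I}_{\tilde H^-}\stackrel d=S_\alpha^{-\alpha}$ from \cite{Patie-aff}, and then uses the density transformation \eqref{eq:tbs} to obtain $m_{H^-}(x)=\alpha^{-1}x^{-1/\alpha}g_\alpha(x^{-1/\alpha})$. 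Both routes land in the same place; the paper's avoids the moment bookkeeping but imports an extra reference.

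\emph{The power series.} Here you diverge more substantially. The paper does \emph{not} invert a Mellin transform by shifting contours. Having already obtained \eqref{eq:dis}, it simply substitutes the classical series $g_\alpha(u)=\sum_{n\ge1}\frac{(-1)^n}{\Gamma(-\alpha n)\,n!}u^{-(1+\alpha n)}$ inside the integral, interchanges sum and integral by dominated convergence, and evaluates the resulting negative moments of ${\rm I}_Y$ via Proposition~\ref{prop:msp}. The growth hypothesis $s^{\alpha-1}\phi_+(-s)\to0$ enters precisely to dominate the tail of the series, not to control decay along vertical lines. Your residue approach is plausible in spirit, but note two snags: the contour must be pushed to the \emph{left} (the series is in positive powers of $x$, so the relevant poles sit at $s=-n$), and you would need uniform vertical-line estimates on $\E[{\rm I}_Y^{s-1}]$, which is not given in closed form for non-integer $s$. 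The paper's substitution-plus-dominated-convergence argument sidesteps both issues and is the cleaner route once \eqref{eq:dis} is in hand.

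For the MSU statement the paper, like you, invokes \cite{Simon} for the stable dichotomy and the invariance of MSU under powers; no separate sign analysis of $(\log m_{H^-}(e^{\cdot}))''$ is carried out.
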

\begin{remark}
The fact that ${\rm{I}}_{H^-}$ is MSU  if and only if  $\alpha\leq 1/2$ is a consequence of the main result of \cite{Simon}.
\end{remark}
\begin{remark}
Note that this is a very special example of the approach of building the \LL process from $\phi_{\pm}$ when $\mu_{\pm}\in \mathcal{P}$. One could construct many examples like this and this allows for interesting applications in mathematical finance and insurance, see e.g.~\cite{Patie-09-cras}.
\end{remark}

 As a specific instance of the previous result, we may consider the case when \[\phi_+(-s)=-\frac{ \Gamma(\alpha' s+1)}{\Gamma(\alpha'(s+1)+1)},\: s\geq0,\] with $\alpha' \in (0,1)$.  We easily obtain from the identity \eqref{eq:msn} below that
\[\E\left[{\rm{I}}_{Y}^{-m}\right] = \frac{\Gamma(\alpha'm+1-\alpha')}{\Gamma(1-\alpha')}, \: m=1,2,\ldots, \]
that is ${{\rm{I}}_{Y}}\stackrel{d}= G^{-\alpha'}_{1-\alpha'}$.  Hence, as the product of independent MSU random variables,  ${\rm{I}}_{\xi}$ is MSU for any $\alpha' \in (0,1)$ and $\alpha\leq 1/2$. Moreover, using the asymptotic behavior of the ratio  of gamma functions given in \eqref{eq:ag} below, we deduce that  for any $\alpha' \in (0,1-\alpha)$ we have
 \begin{eqnarray}
m_{{\xi}}(x)
&=&\frac{1}{\Gamma(1-\alpha')\alpha}\sum_{n=1}^{\infty} \frac{\Gamma(\alpha' n+1)}{\Gamma(-\alpha n) n!}(-1)^{n}x^{n},
\end{eqnarray}
which is valid for any $x>0$.

 We end this section by describing  another interesting factorization of  exponential functionals. Indeed, assuming that $\mu_-\in \mathcal{P}$,  it is shown in \cite[Theorem 1]{Patie-aff} that there exists a spectrally positive L\'evy process $\overline{Y}=(\overline{Y}_t)_{t\geq0}$ with a negative mean and  Laplace exponent given by $\overline{\psi}_{+}(-s)=-s\phi_-(s+1),\: s>0,$ such that the following factorization of the exponential law
\begin{equation} \label{eq:dy}
{\rm{I}}_{H^-}\times {\rm{I}}^{-1}_{\overline{Y}}\stackrel{d}= G_1
\end{equation}
holds.  Hence, combining \eqref{eq:dy} with \eqref{MainAssertion}, we obtain that
\[{\rm{I}}_{\xi}\times  {\rm{I}}^{-1}_{\overline{Y}}\stackrel{d}= G_1 \times {\rm{I}}_{Y}.\]
Consequently, we deduce from \cite[Theorem 51.6]{Sato-99} the following.
\begin{corollary}\label{Corollary33}
If in  one of the settings of Theorem \ref{MainTheorem}, we assume further that  $\mu_-\in \mathcal{P}$, then the density of  the random variable ${\rm{I}}_{\xi}\times {\rm{I}}^{-1}_{\overline{Y}}$, where ${\rm{I}}_{\overline{Y}}$ is taken as defined in \eqref{eq:dy},  is a mixture of exponential distributions and in particular it is infinitely divisible and non-increasing on $\R^+$.
\end{corollary}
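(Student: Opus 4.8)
The plan is to trace through the chain of distributional identities that has already been assembled just before the statement, and then invoke the cited structural result of Sato to read off the conclusion. The starting point is the factorization \eqref{eq:dy}, valid under the extra hypothesis $\mu_-\in\mathcal{P}$, which asserts that ${\rm{I}}_{H^-}\times {\rm{I}}^{-1}_{\overline{Y}}\stackrel{d}= G_1$, where $\overline{Y}$ is the spectrally positive \LLP with negative mean whose Laplace exponent is $\overline{\psi}_+(-s)=-s\phi_-(s+1)$. Combining this with the main factorization \eqref{MainAssertion}, namely ${\rm{I}}_{\xi}\stackrel{d}={\rm{I}}_{H^-}\times {\rm{I}}_{Y}$ with ${\rm{I}}_{H^-}$ and ${\rm{I}}_{Y}$ independent, and using that ${\rm{I}}_{\overline{Y}}$ can be taken independent of ${\rm{I}}_{Y}$, I would multiply both sides of \eqref{MainAssertion} by the independent factor ${\rm{I}}^{-1}_{\overline{Y}}$ and regroup, obtaining ${\rm{I}}_{\xi}\times {\rm{I}}^{-1}_{\overline{Y}}\stackrel{d}= \left({\rm{I}}_{H^-}\times {\rm{I}}^{-1}_{\overline{Y}}\right)\times {\rm{I}}_{Y}\stackrel{d}= G_1\times {\rm{I}}_{Y}$, which is precisely the displayed identity preceding the corollary.

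The second step is to identify the law of $G_1\times {\rm{I}}_{Y}$. Here I would recall from the discussion in Section \ref{SecMain} that ${\rm{I}}_{Y}$ is self-decomposable (as established via \cite{Rivero-05}, since $Y$ is spectrally positive with negative mean), hence its law is that of a positive self-decomposable random variable; equivalently, $\log {\rm{I}}_{Y}$ is self-decomposable on $\R$. Now $G_1$ is a standard exponential variable, and an independent exponential multiplicative factor is exactly what Theorem 51.6 in \cite{Sato-99} concerns: the product of an independent exponential random variable with a positive random variable whose logarithm is self-decomposable (more precisely, the relevant criterion on the tail / density of ${\rm{I}}_{Y}$) yields a law that is a mixture of exponential distributions. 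I would therefore verify that the hypotheses of \cite[Theorem 51.6]{Sato-99} are met by ${\rm{I}}_{Y}$ — this is where the self-decomposability of ${\rm{I}}_{Y}$, together with the absolute continuity of its law guaranteed by \cite[Theorem 3.9]{Bertoin-Lindner-07}, does the work — and conclude that $G_1\times {\rm{I}}_{Y}$, and hence ${\rm{I}}_{\xi}\times {\rm{I}}^{-1}_{\overline{Y}}$, has a density that is a mixture of exponential densities.

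Finally, it is classical that a mixture of exponential distributions is infinitely divisible — indeed it is a generalized Gamma convolution, or one may argue directly that its density is completely monotone and apply the Goldie–Steutel characterization — and that such a density is non-increasing on $\R^+$, being an integral of decreasing exponential densities against a positive mixing measure. Collecting these observations gives all three asserted properties. I expect the only delicate point to be the precise matching of our setup to the statement of \cite[Theorem 51.6]{Sato-99}: one must make sure that the role of the exponential factor and the self-decomposability hypothesis are invoked in the correct direction, i.e. that it is ${\rm{I}}_{Y}$ (and not the product) that needs to be self-decomposable, and that Sato's theorem is quoted with the multiplicative convention we are using here rather than an additive one on the logarithmic scale; once that bookkeeping is done, the rest is immediate.
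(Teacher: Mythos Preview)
Your overall route is exactly the paper's: combine \eqref{eq:dy} with \eqref{MainAssertion} to obtain ${\rm{I}}_{\xi}\times {\rm{I}}^{-1}_{\overline{Y}}\stackrel{d}=G_1\times {\rm{I}}_{Y}$, and then read off the three properties. However, you have misidentified what \cite[Theorem 51.6]{Sato-99} says and what is actually needed to apply it.

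Theorem 51.6 in Sato is not a criterion for a law to be a mixture of exponentials; it is the statement that any mixture of exponential distributions on $[0,\infty)$ is infinitely divisible. The fact that $G_1\times {\rm{I}}_{Y}$ is a mixture of exponentials is completely elementary and uses nothing about ${\rm{I}}_{Y}$ beyond positivity and independence: conditioning on ${\rm{I}}_{Y}=y$, the variable $yG_1$ is exponential with mean $y$, so the unconditional law is $\int_0^\infty \tfrac{1}{y}e^{-x/y}\,\P({\rm{I}}_Y\in dy)$. Self-decomposability of ${\rm{I}}_{Y}$, absolute continuity via \cite{Bertoin-Lindner-07}, and the log-scale discussion are all irrelevant here and should be dropped. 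Once you have ``mixture of exponentials'', Sato's theorem gives infinite divisibility, and the non-increasing density is immediate from the integral representation of the density as a positive combination of decreasing exponentials. So your ``only delicate point'' is in fact no point at all; the bookkeeping you worry about does not arise.
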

Considering as above that ${\rm{I}}_{H^-}\stackrel{(d)}{=}{G^{\alpha}_1}$ in  Corollary \ref{Corollary1} and \ref{Corollary2}, we deduce from \cite[Section 3.2]{Patie-aff} that
the random variable $S^{-\alpha}_{\alpha}\times {\rm{I}}_{\xi}$ is a mixture of exponential distributions, where $S_{\alpha}$ is  a positive stable law of index $\alpha$.

\section{Some results on  generalized Ornstein-Uhlenbeck processes}\label{O-U}
The results we present here will be central  in the development of the proof of our main theorem. However, they also have some interesting implications in the study of generalized Ornstein-Uhlenbeck processes (for short GOU), and for this reason we state and prove them in a separate section.

 We recall that for a given  L\'evy process $\xi$ the GOU process $U^{\xi}$, is defined, for any $t\geq0,\,x\geq0$, by
\begin{equation}\label{Ornstein-U}
U^{\xi}_t(x)=xe^{\xi_{t}}+e^{\xi_{t}}\int_{0}^{t}e^{-\xi_{s}}ds.
\end{equation}
This family of positive strong Markov processes has been intensively studied by Carmona et al.~\cite{Carmona-Petit-Yor-97} and we refer to \cite{Patie-08a} for some more recent studies and references. The connection with our current problem is explained as follows. From the identity in law $(\xi_{t}-\xi_{(t-s)-})_{0\leq s\leq t}=(\xi_{s})_{s\leq t}$, we easily deduce that, for any fixed $t\geq0$,
\begin{equation*}
U^{\xi}_t(x)\stackrel{d}=xe^{\xi_{t}}+\int_{0}^{t}e^{\xi_{s}}ds.
\end{equation*}
Thus,  we have if $\lim_{t\to \infty}\xi_t=-\infty$ a.s., that
\begin{equation*}
U^{\xi}_{\infty}(x)\stackrel{d}={\rm{I}}_{\xi}
\end{equation*}
and hence the law of ${\rm{I}}_{\xi}$ is the unique  stationary measure of $U^{\xi}$, see \cite[Proposition 2.1]{Carmona-Petit-Yor-97}.

In the sequel we use the standard notation $C_b(\mathbb{R})$ (resp.~$C_b(\mathbb{R}_+)$) to denote the  set of bounded and continuous functions on $\mathbb{R}$ (resp.~on $\mathbb{R}_+$). Furthermore, we set $\mathcal{V}'=C^{2}_{b}(\overline{\mathbb{R}})$, where $C^{2}_{b}(\overline{\mathbb{R}})$ is the set of twice continuously differentiable bounded functions which together with its first two derivatives are continuous on  $\overline{\mathbb{R}}=[-\infty,\infty]$.  Then, we recall that, see e.g.~\cite{Carmona-Petit-Yor-97} for the special case when $\xi$ is the sum of a Brownian motion and an independent  \LLP with  bounded variation and finite exponential moments and \cite{Kuznetsov-Pardo-Savov-11} for the general case, the infinitesimal generator $L^{U^{\xi}}$ of $U^{\xi}$ takes the form
\begin{align}\label{InfGen}
&L^{U^{\xi}}f(x)=L^{\xi}f_e(\ln{x})+f'(x), \: x>0,
\end{align}
 whenever $\E[|\xi_{1}|]<\infty$ and $f_e(x)=f(e^{x})\in Dom(L^\xi)$, where  $L^{\xi}$ stands for the infinitesimal generator of the \LLP $\xi$, considered in the sense of It\^o and Neveu (see
\cite[p.~628-630]{Loeve}). Recall in this sense $\mathcal{V}'\subset Dom(L^\xi)$ and hence $\mathcal{V}=\{f:\overline{\mathbb{R}}_+\mapsto\overline{\mathbb{R}} | f_e\in \mathcal{V}'\}\subset Dom(L^{U^{\xi}})$. 

 In what follows we often appeal to the quantities, defined for $x>0$, by
\begin{equation}\label{PiTail}
\PP(x):=\int_{|y|>x}\Pi(dy);\,\,\PP_{\pm}(x):=\int_{y>x}\Pi_{\pm}(dy),
\end{equation}
\begin{equation}\label{DoublePiTail}
\PPP(x):=\int_{y>x}\PP(y)dy;\,\,\PPP_{\pm}(x):=\int_{y>x}\PP_{\pm}(y)dy,
\end{equation}
 where $\Pi_{+}(dy)=\Pi(dy)1_{\{y>0\}}$ and $\Pi_{-}(dy)=\Pi(-dy)1_{\{y>0\}}$.
Note that the quantities in \eqref{DoublePiTail} are finite when  $\E\left[|\xi_{1}|\right]<\infty$.
Moreover, when $\E[\xi_{1}]<\infty$, \eqref{Levy-K} can be rewritten, for all $z\in\mathbb{C}$, where it is well defined, as follows
\begin{equation}\label{psi}
\Psi(z)=\E\left[\xi_{1}\right]z+\frac{\sigma^{2}}{2}z^{2}+z^{2}\int_{0}^{\infty}e^{zy}\PPPp(y)dy+z^{2}\int_{0}^{\infty}e^{-zy}\PPPn(y)dy.
\end{equation}
For the proof of our main theorem we need to study the stationary measure of $U^{\xi}$ and in particular $L^{U^{\xi}}$ in detail. To this end, we introduce the following functional  space
\begin{equation}\label{K}
\nonumber \mathcal{K}=\Big\{f:\overline{\mathbb{R}}_+\mapsto\overline{\mathbb{R}}|\,f_e\in\mathcal{V}';\,\lim_{x\to -\infty}\Big(|f'_e(x)|+|f''_{e}(x)|\Big)=0;\,\int_{\R} \Big(|f'_e(x)|+|f''_e(x)|\Big)dx<\infty\Big\},
\end{equation}
where $f_e(x)=f(e^{x})$.
\begin{proposition}\label{Prop1}
Let $U^{\xi}$ be a GOU process with $\E[|\xi_{1}|]<\infty$. Then $\mathcal{K}\subset Dom(L^{U^{\xi}})$. Moreover, for any $f\in \mathcal{K}$, we have, for all $x>0$,
\begin{equation}\label{InfGen1}
L^{U^{\xi}}f(x)=\frac{g(x)}{x}+\E[\xi_{1}]g(x)+\frac{\sigma^{2}}{2}xg'(x)+\int_{x}^{\infty}g'(y)\PPPp\Big(\ln{\frac{y}{x}}\Big)dy+\int_{0}^{x}g'(y)\PPPn\Big(\ln{\frac{x}{y}}\Big)dy,
\end{equation}
where $g(x)=xf'(x)$. Finally, for any function $h$ such that $\int_{0}^{\infty}(y^{-1}\wedge 1)|h(y)|dy<\infty$ and $f\in\mathcal{K}$ we have
\begin{equation}\label{InfGen2}
(L^{U^{\xi}}f,h)=(g',\mathcal{L}h),
\end{equation}
where $(f_1,f_2)=\int_0^{\infty}f_1(x)f_2(x)dx$ and
\begin{equation}\label{InfGen3}
\mathcal{L}h(x)=\frac{\sigma^{2}}{2}xh(x)+\int_{x}^{\infty}\left(\frac{1}{y}+\E[\xi_{1}]\right) h(y)dy
+\int_{x}^{\infty}\PPPn\Big(\ln{\frac{y}{x}}\Big)h(y)dy+
\int_{0}^{x}\PPPp\Big(\ln{\frac{x}{y}}\Big)h(y)dy.
\end{equation}
\end{proposition}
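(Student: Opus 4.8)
\emph{Plan of proof.} The first assertion costs nothing: for $f\in\mathcal{K}$ one has $f_e\in\mathcal{V}'$, hence $f\in\mathcal{V}\subset Dom(L^{U^{\xi}})$ and formula \eqref{InfGen} applies. The real content is to massage the right-hand side $L^{\xi}f_e(\ln x)+f'(x)$ of \eqref{InfGen} into the integral form \eqref{InfGen1}, and then to dualise it. I would organise this in three steps: (1) rewrite $L^{\xi}$ in a ``doubly integrated'' form in which $f_e''$ is paired against the double tails $\PPPp,\PPPn$; (2) perform an exponential change of variables to land on \eqref{InfGen1}; (3) pair \eqref{InfGen1} with $h$ and read off $\mathcal{L}$ by Fubini.

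\textbf{Step 1.} Since $\E[|\xi_{1}|]<\infty$ the truncation in the Lévy--Khintchine integrand may be absorbed into the drift, so that for $g\in\mathcal{V}'$ and $u\in\R$,
\[L^{\xi}g(u)=\E[\xi_{1}]g'(u)+\tfrac{\sigma^{2}}{2}g''(u)+\int_{0}^{\infty}\big(g(u+y)-g(u)-yg'(u)\big)\Pi_{+}(dy)+\int_{0}^{\infty}\big(g(u-y)-g(u)+yg'(u)\big)\Pi_{-}(dy).\]
I would then integrate each jump integral by parts twice in $y$: for $\varphi\in C^{2}$ with $\varphi(0)=\varphi'(0)=0$ one has $\int_{0}^{\infty}\varphi(y)\Pi_{\pm}(dy)=\int_{0}^{\infty}\varphi'(y)\PP_{\pm}(y)\,dy=\int_{0}^{\infty}\varphi''(y)\PPP_{\pm}(y)\,dy$ (equivalently, write $\varphi(y)=\int_{0}^{y}\varphi'$ and apply Fubini), the interchanges and the finiteness of the last integral being guaranteed by $\E[|\xi_{1}|]<\infty$ — which makes $\PPP_{\pm}$ finite on $(0,\infty)$ and locally integrable at $0$ — together with $f_e''\in L^{1}(\R)$ and the boundedness of $f_e',f_e''$ for $f\in\mathcal{K}$. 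Taking $\varphi(y)=g(u\pm y)-g(u)\mp yg'(u)$, so $\varphi''(y)=g''(u\pm y)$, yields
\[L^{\xi}g(u)=\E[\xi_{1}]g'(u)+\tfrac{\sigma^{2}}{2}g''(u)+\int_{0}^{\infty}g''(u+y)\PPPp(y)\,dy+\int_{0}^{\infty}g''(u-y)\PPPn(y)\,dy,\]
which is consistent with \eqref{psi} upon formally inserting $g=e^{z\cdot}$.

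\textbf{Step 2.} Put $g(x)=xf'(x)$ as in the statement. From $f_e'(u)=e^{u}f'(e^{u})$ one reads $f_e'(\ln x)=g(x)$, $f_e''(\ln x)=xg'(x)$, $f_e''(\ln x\pm y)=xe^{\pm y}g'(xe^{\pm y})$ and $f'(x)=g(x)/x$. Substituting these into \eqref{InfGen} combined with the formula of Step 1, and then changing variables $w=xe^{y}$ (resp.\ $w=xe^{-y}$) in the two jump integrals, turns them into $\int_{x}^{\infty}g'(w)\PPPp(\ln(w/x))\,dw$ and $\int_{0}^{x}g'(w)\PPPn(\ln(x/w))\,dw$; this is precisely \eqref{InfGen1}. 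I would also record here, for use in Step 3, that $g$ is bounded, that $g(0^{+})=g(\infty)=0$ (the first from $\lim_{-\infty}f_e'=0$, the second from $f_e',f_e''\in L^{1}$), and that $g'(x)=x^{-1}f_e''(\ln x)$, so $\int_{0}^{\infty}|g'(x)|\,dx=\int_{\R}|f_e''(u)|\,du<\infty$.

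\textbf{Step 3.} Pairing \eqref{InfGen1} against $h$, the $\sigma^{2}$-term is already $\int_{0}^{\infty}g'(x)\cdot\tfrac{\sigma^{2}}{2}xh(x)\,dx$. In the two terms $\int_{0}^{\infty}\tfrac{g(x)}{x}h(x)\,dx$ and $\E[\xi_{1}]\int_{0}^{\infty}g(x)h(x)\,dx$ I write $g(x)=\int_{0}^{x}g'(y)\,dy$ (using $g(0^{+})=0$) and apply Fubini to obtain $\int_{0}^{\infty}g'(y)\int_{y}^{\infty}\big(\tfrac{1}{x}+\E[\xi_{1}]\big)h(x)\,dx\,dy$; the interchange is legitimate because $\int_{0}^{\infty}(x^{-1}\wedge1)|h(x)|\,dx<\infty$ makes $y\mapsto\int_{y}^{\infty}(\tfrac1x+|\E[\xi_{1}]|)|h(x)|\,dx$ bounded on $(0,\infty)$ while $g'\in L^{1}$. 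In the two double integrals I merely swap the order of integration over $\{0<x<y\}$, resp.\ $\{0<y<x\}$, producing $\int_{0}^{\infty}g'(y)\int_{0}^{y}\PPPp(\ln(y/x))h(x)\,dx\,dy$ and $\int_{0}^{\infty}g'(y)\int_{y}^{\infty}\PPPn(\ln(x/y))h(x)\,dx\,dy$; Tonelli applies since $g'\in L^{1}$, $\PPP_{\pm}$ is finite on $(0,\infty)$ and locally integrable at $0$, and $\int_{0}^{\infty}(x^{-1}\wedge1)|h(x)|\,dx<\infty$. Summing the four pieces and relabelling identifies the bracket with $\mathcal{L}h(y)$ as in \eqref{InfGen3}, so $(L^{U^{\xi}}f,h)=(g',\mathcal{L}h)$.

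\textbf{Main obstacle.} No new idea is needed beyond Steps 1 and 3; the real work is the convergence bookkeeping — absolute convergence of the $\PPP_{\pm}$-integrals and vanishing of the boundary/off-diagonal terms in the double integration by parts of Step 1, and the justification of each Fubini/Tonelli interchange in Step 3. These are exactly what the decay and $L^{1}$ conditions built into the definition of $\mathcal{K}$, the moment assumption $\E[|\xi_{1}|]<\infty$, and the hypothesis $\int_{0}^{\infty}(y^{-1}\wedge1)|h(y)|\,dy<\infty$ are tailored to supply.
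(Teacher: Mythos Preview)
Your proposal is correct and follows essentially the same route as the paper: $\mathcal{K}\subset Dom(L^{U^{\xi}})$ is immediate from $f_e\in\mathcal{V}'$, the expression \eqref{InfGen1} is obtained by rewriting the generator of $\xi$ against the double tails $\PPP_{\pm}$ and changing variables (the paper simply cites \cite{Kuznetsov-Pardo-Savov-11} for this, while you spell out the two integrations by parts), and the duality \eqref{InfGen2}--\eqref{InfGen3} is then read off by writing $g(x)=\int_0^x g'$ and applying Fubini term by term, with the justifications resting on $g'\in L^1$, the bound $|g|+x|g'|\leq C$, local integrability of $\PPP_{\pm}$ at $0$, and the hypothesis on $h$ --- exactly as in the paper.
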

\begin{remark}  There are certain advantages when using the linear operator $\mathcal{L}$ instead of the generator of the dual GOU. Its integral form allows for minimal conditions on the integrability of $|h|$ and requires no smoothness assumptions on $h$. Moreover, if $h$ is positive, Laplace and Mellin transforms can easily be applied to $\mathcal{L}h(x)$ since the justification of Fubini Theorem is straightforward.
\end{remark}
\begin{proof}
Let $f\in \mathcal{K}$ then by the very definition of $\mathcal{K}$ we have that $f_e\in\mathcal{V}'$ and from \eqref{InfGen} we get that $\mathcal{K}\subset Dom(L^{U^{\xi}})$. Next, \eqref{InfGen1} can be found in \cite{Kuznetsov-Pardo-Savov-11} but can equivalently be recovered from \eqref{InfGen} by simple computations using the expression for $L^{\xi}$, which can be found on  \cite[ p. 24]{Bertoin-96}.
To get \eqref{InfGen2} and \eqref{InfGen3}, we  recall that $g(x)=xf'(x)=f'_e(\ln x)$ and use \eqref{InfGen1} combined with a formal application of the Fubini Theorem to write 
\begin{eqnarray}\label{eqn:Referee}
\nonumber(L^{U^{\xi}} f, h)&=&\IInf \frac{g(y)}{y}h(y)dy+\frac{\sigma^{2}}{2}\IInf y g'(y)h(y)dy+\E[\xi_{1}]\IInf g(y)h(y)dy+\\
\nonumber & & \IInf
\int_{0}^{y}g'(v)\PPPn\big(\ln{\frac{y}{v}}\big)dv h(y)dy+\IInf\int_{y}^{\infty}g'(v)\PPPp\big(\ln{\frac{v}{y}}\big)dvh(y)dy \\
\nonumber &=&
\IInf g'(v)\int_{v}^{\infty}\frac{h(y)}{y}dydv+\E[\xi_{1}]\IInf g'(v)\int_{v}^{\infty}h(y)dydv+\frac{\sigma^{2}}{2}\IInf vg'(v)h(v)dv+\\
\nonumber & & \IInf g'(v)\int_{v}^{\infty}\PPPn\big(\ln{\frac{y}{v}}\big)h(y)dydv+\IInf g'(v)\int_{0}^{v}\PPPp\big(\ln{\frac{v}{y}}\big)h(y)dydv\\ &=&
(g',\mathcal{L}h).
\end{eqnarray}
To justify Fubini Theorem, note that $f\in\mathcal{K}$ implies that $\lim_{x\to 0}g(x)=\lim_{x\to 0}f'_e(\ln x)=0$, $g(x)=\int_{0}^{x}g'(v)dv$ and
\begin{align}\label{PropertiesG}
\nonumber &\int_{0}^{\infty}|g'(v)|dv= \int_{\R}|f''_e(y)|dy\leq C(g)<\infty,\\
&|g(x)|+x|g'(x)|= |f'_e(\ln x)|+|f''_e(\ln x)| \leq C(g)<\infty,
\end{align}
where $C(g)>0$. Note that \eqref{PropertiesG} and the integrability of $(1\wedge y^{-1})|h(y)|$ imply that
\[\IInf \Big|\frac{g(y)}{y}\Big|h(y)dy\leq \IInf \int_{0}^{y}|g'(v)|dv y^{-1}|h(y)|dy\leq C(g) \IInf y^{-1}|h(y)|dy<\infty,\]
and so Fubini Theorem applies to the first term in \eqref{eqn:Referee}. The second term in \eqref{eqn:Referee} remains unchanged whereas for the third one we do the same computation noting that only $y^{-1}$ is not present. From \eqref{PropertiesG} and the fact that $\PPPp(1)+\PPPn(1)<\infty$ since $\E[|\xi_{1}|]<\infty$, we note that for the other two terms, we have with the constant $C(g)>0$ in \eqref{PropertiesG},
\begin{eqnarray*}\label{Bound1}
\int_{0}^{x}|g'(v)|\PPPn\Big(\ln{\frac{x}{v}}\Big)dv&=&\IInf |xe^{-w}g'(xe^{-w})|\PPPn(w)dw \\
&\leq&\PPPn(1)\int_{0}^{\infty}|g'(v)|dv+C(g)\int_{0}^{1}\PPPn(w)dw<\infty\\
 \int_{x}^{\infty}|g'(v)|\PPPp\Big(\ln{\frac{v}{x}}\Big)dv&=&\IInf |xe^{w}g'(xe^{w})|\PPPp(w)dw \\
&\leq& \PPPp(1)\int_{0}^{\infty}|g'(v)|dv+ C(g) \int_{0}^{1}\PPPp(w)dw<\infty.
\end{eqnarray*}
Therefore we can apply Fubini Theorem which  completes  the proof of Proposition \ref{Prop1}.
\end{proof}
 The next result is known and can be found  in \cite{Kuznetsov-Pardo-Savov-11} but we include it and sketch its proof for sake of  completeness and for further discussion.
\begin{theorem}\label{O-UTheorem1}
Let $U^{\xi}$ be a GOU where $-\infty<\E[\xi_{1}]<0$. Then $U^{\xi}$ has a unique stationary distribution which is absolutely continuous with density $m$ and satisfies
\begin{equation}\label{IntegralEqn}
\mathcal{L}m(x)=0 \text{ for a.e. $x>0$}.
\end{equation}
\end{theorem}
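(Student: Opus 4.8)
The plan is to combine three facts recalled above — the identification of the stationary law of $U^{\xi}$ with the law of ${\rm{I}}_{\xi}$, its absolute continuity, and the algebraic identity \eqref{InfGen2} of \refprop{Prop1} — the novelty being that the last of these lets us read off \eqref{IntegralEqn} directly, without ever touching the generator of the dual process. Since $-\infty<\E[\xi_{1}]<0$, the strong law for \LL processes gives $\xi_{t}\to-\infty$ a.s., so by \cite[Proposition 2.1]{Carmona-Petit-Yor-97} the process $U^{\xi}$ has a unique stationary distribution, equal to the law of ${\rm{I}}_{\xi}$, which by \cite[Theorem 3.9]{Bertoin-Lindner-07} is absolutely continuous with a density $m=m_{\xi}$. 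It remains to establish \eqref{IntegralEqn}.

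For that I would use the elementary characterization of an invariant probability $\pi(dx)=m(x)\,dx$: whenever $f\in Dom(L^{U^{\xi}})$ and $L^{U^{\xi}}f$ is bounded, one has $(L^{U^{\xi}}f,m)=0$, obtained by integrating the identity $P_{t}f=f+\int_{0}^{t}P_{s}L^{U^{\xi}}f\,ds$ against $\pi$ and invoking invariance twice. I then apply this to those $f\in\mathcal{K}$ for which $g:=xf'$ is $C^{1}$ with compact support strictly inside $(0,\infty)$; for such $f$ the estimates already present in the proof of \refprop{Prop1} show that each of the five terms of \eqref{InfGen1} is bounded on $(0,\infty)$, so indeed $(L^{U^{\xi}}f,m)=0$. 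Since $m$ is a probability density it trivially satisfies $\int_{0}^{\infty}(y^{-1}\wedge 1)|m(y)|\,dy\le 1<\infty$, so \eqref{InfGen2} applies and yields $(g',\mathcal{L}m)=0$; writing $x=e^{u}$ (so that $g(e^{u})=f'_{e}(u)$) this reads $\int_{\R}f''_{e}(u)\,\mathcal{L}m(e^{u})\,du=0$.

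As $f$ runs through the class just described, $f''_{e}$ runs through all compactly supported continuous functions on $\R$ with zero integral; since the finiteness of $(g',\mathcal{L}m)$ in \eqref{InfGen2} also forces $\mathcal{L}m\in L^{1}_{\mathrm{loc}}((0,\infty))$, the orthogonality relation above implies $\mathcal{L}m(x)=c$ for a.e. $x>0$ and some constant $c$. To pin down $c=0$ I would analyse \eqref{InfGen3} with $h=m$ for large $x$: averaging the identity $\mathcal{L}m\equiv c$ against $dx/x$ over $[R,R^{2}]$ and letting $R\to\infty$, one checks — using $\int_{0}^{\infty}m=1$, the boundedness of $y\mapsto y^{-1}+\E[\xi_{1}]$ on $[1,\infty)$, the decay of $\PPPp,\PPPn$ at $+\infty$ and their local integrability near $0$ (both guaranteed by $\E[|\xi_{1}|]<\infty$ together with $\int(y^{2}\wedge 1)\Pi(dy)<\infty$) — that this average tends to $0$; hence $c=0$, which is \eqref{IntegralEqn}.

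The heart of the argument — and the point where the method promoted in the introduction is used — is the second step: one must choose the test functions rich enough that \eqref{InfGen2} determines $\mathcal{L}m$ up to an additive constant, yet restricted enough ($xf'$ compactly supported away from $0$ and $\infty$) that $L^{U^{\xi}}f$ is genuinely bounded, so that the soft ``invariance $\Rightarrow\int L^{U^{\xi}}f\,d\pi=0$'' argument goes through with no a priori regularity of $m$ and no appeal to the dual generator. The only other delicate point is the (elementary but slightly technical) limiting computation identifying the constant $c$ as $0$.
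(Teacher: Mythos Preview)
Your proof is correct and follows essentially the same route as the paper's. The only noteworthy difference is in the choice of test class: the paper takes $f\in C_{0}^{\infty}(\mathbb{R}_{+})$, so that $f_{e}\in C_{0}^{\infty}(\mathbb{R})$ and $f''_{e}$ is a genuine second derivative, whence Schwartz theory only yields $\mathcal{L}m(e^{u})=Cu+D$, i.e.\ $\mathcal{L}m(x)=C\ln x+D$; two constants must then be killed by the asymptotic step. You instead take the (larger) class of $f\in\mathcal{K}$ with $g=xf'$ compactly supported in $(0,\infty)$, so that $f'_{e}\in C^{1}_{c}(\mathbb{R})$ and $f''_{e}$ ranges over all of $\{h\in C_{c}(\mathbb{R}):\int h=0\}$; this immediately forces $\mathcal{L}m\equiv c$, and only one constant remains to be identified. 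Your averaging over $[R,R^{2}]$ against $dx/x$ is a legitimate variant of the paper's procedure (integrate $\mathcal{L}m$ from $1$ to $z$, divide by $z$, let $z\to\infty$); both arguments are sketched rather than carried out in full, and both rely on the same ingredients you list: $\int m=1$, local integrability of $\PPPp,\PPPn$ near $0$, and their vanishing at infinity. A careful execution of your version does require splitting the convolution terms at $w=1$ and using dominated convergence for the tail contribution, but this is exactly the ``mainly technical'' effort the paper alludes to.
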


\begin{remark}
 Note that due to the discussion in Section \ref{O-U}, $m=m_{\xi}$, i.e it equals the density of the law of ${\rm{I}}_{\xi}$. Therefore all the information we gathered for $m_{\xi}$ in Section \ref{SecMain} is valid here for the density of the stationary measure of $U^{\xi}$, i.e.~$m$.
\end{remark}
\begin{remark}
 Equation \eqref{IntegralEqn} can be very useful. In this instance it is far easier to be studied than an equation coming from the dual process which is standard when stationary distributions are discussed. It does not presuppose any smoothness of $m$ but only its existence. Moreover, as noted above \eqref{IntegralEqn} is amenable to various transforms and difficult issues such as interchanging integrals using Fubini Theorem are effortlessly overcome.
\end{remark}
\begin{remark} It is also interesting to explore other cases when a similar equation to \eqref{IntegralEqn} can be obtained. It seems the approach is fairly general but requires special examples to reveal its full potential. For example, if $L$ is an infinitesimal generator, $\mathcal{N}$ is a differential operator, $\mathcal{L}$ is an integral operator and it is possible for all $f\in C_{0}^{\infty}(\mathbb{R}_{+})$, i.e. infinitely differentiable functions with compact support, and a stationary density $u$ to write
\[(Lf,u)=(\mathcal{N}f,\mathcal{L}u)=0\]
then we can solve the equation in the sense of Schwartz to obtain
\[\tilde{\mathcal{N}}\mathcal{L}u=0,\]
where $\tilde{\mathcal{N}}$ is the dual of $\mathcal{N}$. If we show that necessarily for probability densities $\mathcal{L}u=0$, then we can use $\mathcal{L}$ to study stationarity.
\end{remark}
\begin{proof}
 From \eqref{InfGen2} and the fact that $m$ is the stationary density we get,  for all $g(x)=xf'(x)$, with $f\in C_{0}^{\infty}(\mathbb{R}_{+})\subset \mathcal{K}$,
\[(g',\mathcal{L}m)=0.\]
Then from Schwartz theory of distributions we get $\mathcal{L}m(x)=C\ln{x}+D$ a.e.. Integrating \eqref{InfGen3} and the right-hand side of the latter from $1$ to $z$, multiply the resulting identity by $z^{-1}$, subsequently letting $z\rightarrow\infty$ and using the fact that $m$ is a probability density we can show that necessarily $C=D=0$. The latter requires some efforts but they are mainly technical.
\end{proof}
\begin{theorem}\label{O-UTheorem}
Let $\overline{m}$ be a probability density function such that $\int_{0}^{\infty}\overline{m}(y)y^{-1}dy<\infty$ and \eqref{IntegralEqn} holds for $\overline{m}$ then
\begin{equation}\label{uniqueness}
m(x)=\overline{m}(x)\text{ a.e.,}
\end{equation}
where $m$ is the density of the stationary measure of $U^{\xi}$.
\end{theorem}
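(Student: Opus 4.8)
The plan is to show that $\overline{m}(x)\,dx$ is a stationary probability measure for $U^{\xi}$ and then to invoke the uniqueness part of Theorem~\ref{O-UTheorem1}. First I would observe that $\overline{m}$ is an admissible test density in Proposition~\ref{Prop1}: since $\overline{m}\geq 0$, $\int_{0}^{\infty}\overline{m}(y)\,dy=1$ and, by hypothesis, $\int_{0}^{\infty}\overline{m}(y)y^{-1}\,dy<\infty$, we get $\int_{0}^{\infty}(y^{-1}\wedge 1)|\overline{m}(y)|\,dy<\infty$, so that \eqref{InfGen2} is available with $h=\overline{m}$. Combining \eqref{InfGen2} with the assumption $\mathcal{L}\overline{m}(x)=0$ a.e.\ then yields
\[
(L^{U^{\xi}}f,\overline{m})=(g',\mathcal{L}\overline{m})=0\qquad\text{for every }f\in\mathcal{K},
\]
with $g(x)=xf'(x)$; in particular $\int_{0}^{\infty}L^{U^{\xi}}f(x)\,\overline{m}(x)\,dx=0$ for all $f\in C_{0}^{\infty}(\mathbb{R}_{+})\subset\mathcal{K}$. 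In other words, $\overline{m}\,dx$ is infinitesimally invariant for $U^{\xi}$.

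The second step is to promote infinitesimal invariance to genuine invariance. Let $(P_{t})_{t\geq0}$ be the semigroup of $U^{\xi}$ and $R_{\lambda}f=\int_{0}^{\infty}e^{-\lambda t}P_{t}f\,dt$, $\lambda>0$, its resolvent, so that $(\lambda-L^{U^{\xi}})R_{\lambda}f=f$. For $f$ in a sufficiently rich class of test functions (say, smooth and compactly supported in $(0,\infty)$) I would check, using the regularity of the GOU semigroup on $(0,\infty)$, that $F_{\lambda}:=R_{\lambda}f\in\mathcal{K}$; pairing the resolvent equation with $\overline{m}$ and using the first step,
\[
\int_{0}^{\infty}f(x)\,\overline{m}(x)\,dx=\lambda\int_{0}^{\infty}F_{\lambda}(x)\,\overline{m}(x)\,dx-\int_{0}^{\infty}L^{U^{\xi}}F_{\lambda}(x)\,\overline{m}(x)\,dx=\lambda\int_{0}^{\infty}R_{\lambda}f(x)\,\overline{m}(x)\,dx .
\]
Hence the measure $\overline{m}\,dx$ satisfies $\int_{0}^{\infty}R_{\lambda}f\,\overline{m}\,dx=\lambda^{-1}\int_{0}^{\infty}f\,\overline{m}\,dx$ for every $\lambda>0$; inverting the Laplace transform in $\lambda$ gives $\int_{0}^{\infty}P_{t}f\,\overline{m}\,dx=\int_{0}^{\infty}f\,\overline{m}\,dx$ for all $t\geq0$, first for $f$ in the test class and then, by a standard approximation argument, for all $f\in C_{b}(\mathbb{R}_{+})$. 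Thus $\overline{m}\,dx$ is a stationary probability measure of $U^{\xi}$. (Alternatively, one may bypass the resolvent computation by invoking that $C_{0}^{\infty}(\mathbb{R}_{+})$ is a core for $L^{U^{\xi}}$ and that $U^{\xi}$ is positive Harris recurrent with the unique stationary law of Theorem~\ref{O-UTheorem1}, so that any infinitesimally invariant probability coincides with that law.)

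Finally, Theorem~\ref{O-UTheorem1} asserts that $U^{\xi}$ has a unique stationary distribution and that its density is $m$; consequently $\overline{m}=m$ a.e., which is \eqref{uniqueness}.

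The delicate point is the second step, namely converting the bilinear identity $(L^{U^{\xi}}f,\overline{m})=0$, which Proposition~\ref{Prop1} supplies only for $f\in\mathcal{K}$, into a bona fide invariance statement. This requires either that $\mathcal{K}$ contains a core of $L^{U^{\xi}}$, or that $R_{\lambda}$ maps a rich enough family of test functions back into $\mathcal{K}$; in either case one has to control the smoothness and the behaviour at $0$ and at $\infty$ of $R_{\lambda}f$ (respectively of $P_{t}f$) for the GOU process. Once invariance is secured, the conclusion is immediate from the uniqueness of the stationary law already established in Theorem~\ref{O-UTheorem1}.
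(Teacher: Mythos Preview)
Your overall plan coincides with the paper's: use \eqref{InfGen2} to obtain $(L^{U^{\xi}}f,\overline m)=0$ for every $f\in\mathcal K$, upgrade this to $(P_tf,\overline m)=(f,\overline m)$, and conclude from the uniqueness of the stationary law. The difference lies in how the upgrade is carried out, and that is exactly where your proposal has a gap.

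You route through the resolvent and need $R_\lambda f\in\mathcal K$; you correctly flag this as ``the delicate point'' but do not prove it. Appealing to ``regularity of the GOU semigroup'' or to a core property is not sufficient here: $L^{U^{\xi}}$ is non-local, the process lives on a half-line, and membership in $\mathcal K$ imposes integrability of $f_e'$ and $f_e''$ over all of $\R$ together with a specific decay condition as $x\to-\infty$. None of this comes for free, and a general core statement for GOU generators of this type is not available in the paper.

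The paper bypasses the resolvent and proves directly, by hand, that $P_s\mathcal K\subset\mathcal K$ for all $s>0$, together with the uniform pointwise bound $|L^{U^{\xi}}P_sf(x)|\le C(f,\xi)(x^{-1}\wedge 1)$. Both facts are obtained by differentiating under the expectation in the explicit formula
\[
P_sf(x)=\E\Big[f\Big(xe^{\xi_s}+\int_0^s e^{\xi_v}\,dv\Big)\Big],
\]
using that $\int_0^s e^{\xi_v}\,dv>0$ a.s.\ for $s>0$; the latter is precisely what forces $(P_sf)_e'$ and $(P_sf)_e''$ to vanish at $-\infty$, while the estimates on $f_e',f_e''$ transfer to $(P_sf)_e$ with constants uniform in $s$. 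With these two ingredients one integrates the identity $P_tf=f+\int_0^t L^{U^{\xi}}P_sf\,ds$ against $\overline m$; the bound together with the hypothesis $\int_0^\infty y^{-1}\overline m(y)\,dy<\infty$ justifies Fubini and yields $(P_tf,\overline m)=(f,\overline m)$ for all $f\in\mathcal K$.

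In short, your resolvent route would work \emph{provided} you could verify $R_\lambda f\in\mathcal K$, but the natural way to do so is to first establish $P_s\mathcal K\subset\mathcal K$ with the accompanying bound --- which is exactly the paper's computation, after which the resolvent detour becomes unnecessary.
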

\begin{remark} This result is very important in our studies. The fact that we have uniqueness on a large class of probability measures allows us by checking that $\eqref{IntegralEqn}$ holds to pin down the density of the stationary measure of $U^{\xi}$ which is of course the density of ${\rm{I}}_{\xi}$. The requirement that $\int_{0}^{\infty}\overline{m}(y)y^{-1}dy<\infty$ is in fact no restriction whatsoever since the existence of a first negative moment of ${\rm{I}}_{\xi}$ is known from the literature, see \cite{Bertoin-Yor-02-b}.
\end{remark}
\begin{remark} Also it is well known that if $L^{\hat{U}}$ is the generator of the dual Markov process then $L^{\hat{U}}\overline{m}=0$ does not necessarily have a unique solution when $L^{\hat{U}}$ is a non-local operator. Moreover one needs assumptions on the smoothness of $\overline{m}$ so as to apply $L^{\hat{U}}$. Using $\mathcal{L}$ circumvents this problem.
\end{remark}
\begin{proof}
 Let $(P_{t})_{t\geq 0}$ be the semigroup of the GOU $U^{\xi}$, that is, for any $f\in C_b(\overline{\R}_+)$,
\[P_tf(x)= \E\left[f\left(U^{\xi}_t(x)\right)\right],\: x\geq 0,\,t\geq0.\]
 If \eqref{IntegralEqn} holds for some probability density $\overline{m}$ then \eqref{InfGen2} is valid, i.e. for all $f\in \mathcal{K}$,
\[(L^{U^{\xi}}f,\overline{m})=(g',\mathcal{L}\overline{m})=0.\]
 Assume for a moment that
 \begin{equation}\label{Invariant set}
P_s\mathcal{K}\subset \mathcal{K}, \text{ for all $s>0$},
 \end{equation}
 and, there exists a constant $C(f,\xi)>0$ such that, for all $s\leq t$,
 \begin{equation}\label{Bound}
\left|L^{U^{\xi}}P_{s}f(x)\right|\leq C(f,\xi)(x^{-1}\wedge 1).
 \end{equation}
 Then integrating out with respect to $\overline{m}(x)$ the standard equation
\[P_{t}f(x)=f(x)+\int_{0}^{t}L^{U^{\xi}}P_{s}f(x)ds,\]
we get, for all $f\in\mathcal{K}$,
\[\int_{0}^{\infty}P_{t}f(x)\overline{m}(x)dx=\int_{0}^{\infty}f(x)\overline{m}(x)dx.\]
Since $C_{0}^{\infty}(\mathbb{R}_{+})\subset \mathcal{K}$ and $C_{0}^{\infty}(\mathbb{R}_{+})$ is separating for $C_{0}(\mathbb{R}_{+})$, the last identity shows that $\overline{m}$ is a density of a stationary measure. Thus by uniqueness of the stationary measure we conclude \eqref{uniqueness}. Let us prove \eqref{Invariant set} and \eqref{Bound}. For $f\in \mathcal{K}$ write
 \[g_s(x):=P_{s}f(x)=\E\left[ f\left(U^{\xi}_{s}(x)\right)\right]=\E \left[f\left(xe^{\xi_{s}}+\int_{0}^{s}e^{\xi_{v}}dv\right)\right].\]
 Put $\tilde{g}_{s}(x)=g_{s}(e^x)=(g_{s})_e(x)$ .
Note that since $f\in \mathcal{K}$ and $0<e^{x+\xi_{s}}\leq e^{x+\xi_{s}}+\int_{0}^{s}e^{\xi_{v}}dv$ we have the following bound 
\begin{equation}\label{NewBound}\left|e^{x+\xi_{s}}f'\left(e^{x+\xi_{s}}+\int_{0}^{s}e^{\xi_{v}}dv\right)\right|+\left|e^{2(x+\xi_{s})}f''\left(e^{x+\xi_{s}}+\int_{0}^{s}e^{\xi_{v}}dv\right)\right|\leq C(f)\end{equation}
which holds uniformly in $x\in \R$ and $s\geq 0$.
In view of \eqref{NewBound} the dominated convergence theorem gives
\begin{align}\label{New1}
\nonumber &\tilde{g}'_{s}(x)=\E\left[ e^{x+\xi_{s}}f'\left(e^{x+\xi_{s}}+\int_{0}^{s}e^{\xi_{v}}dv\right)\right],\\
\nonumber &\tilde{g}''_{s}(x)=
  \E\left[ e^{x+\xi_{s}}f'\left(e^{x+\xi_{s}}+\int_{0}^{s}e^{\xi_{v}}dv\right)\right]+\E\left[ e^{2(x+\xi_{s})}f''\left(e^{x+\xi_{s}}+\int_{0}^{s}e^{\xi_{v}}dv\right)\right],\\
  &\max\{|\tilde{g}'_{s}(x)|,|\tilde{g}''_{s}(x)|\}\leq C(f).
\end{align}
Clearly then from \eqref{NewBound}, \eqref{New1}, the dominated convergence theorem and the fact that $f\in \mathcal{K}$ which implies the existence of $\lim_{x\to\infty}f''_{e}(x)=b$, we have
\begin{align*}
& \lim_{x\to\infty}\tilde{g}''_{s}(x)=\E\left[ \lim_{x\to\infty}\left(e^{x+\xi_{s}}f'\left(e^{x+\xi_{s}}+\int_{0}^{s}e^{\xi_{v}}dv\right)+ e^{2(x+\xi_{s})}f''\left(e^{x+\xi_{s}}+\int_{0}^{s}e^{\xi_{v}}dv\right)\right)\right]=
b.
\end{align*}
 Similarly, we show that $\lim_{x\to\infty}\tilde{g}'_{s}(x)=\lim_{x\to\infty}f'_e(x)$ and trivially $\lim_{x\to\pm\infty}\tilde{g}_{s}(x)=\lim_{x\to\pm\infty} f_e(x)$.
Finally using \eqref{NewBound}, \eqref{New1}, $f\in\mathcal{K}$, the dominated convergence theorem  and the fact that for all $s>0$ almost surely $\int_{0}^{s}e^{\xi_{v}}dv>0$, we conclude that
\[ \lim_{x\to -\infty}|\tilde{g}'_{s}(x)|+|\tilde{g}''_{s}(x)|\leq
 2\E\left[\lim_{x\to-\infty}\left|e^{x+\xi_{s}}f'\left(e^{x+\xi_{s}}+\int_{0}^{s}e^{\xi_{v}}dv\right)\right|+\left|e^{2(x+\xi_{s})}f''\left( e^{x+\xi_{s}}+\int_{0}^{s}e^{\xi_{v}}dv\right)\right|\right],\]
which together with the limits above confirms that $\tilde{g}_s \in \mathcal{V}'$ and proves that
\[\lim_{x\to -\infty}|\tilde{g}'_{s}(x)|+|\tilde{g}''_{s}(x)|=0.\]
Finally since $f\in \mathcal{K}$ and \eqref{NewBound}, we check that
\begin{align*}
&\IInf |\tilde{g}'_{s}(y)|dy\leq \E\left[\int_{\int_{0}^{s}e^{\xi_{v}}dv}^{\infty}|f'(u)|du\right]\leq\int_{0}^{\infty}|f'(u)|du=\int_{\R}|f'_e(u)|du<C(f), 
\end{align*}
and 
\begin{eqnarray*}
\IInf |\tilde{g}''_{s}(y)|dy&\leq& E\left[\int_{\int_{0}^{s}e^{\xi_{v}}dv}^{\infty}\left(u-\int_{0}^{s}e^{\xi_{v}}dv\right)|f''(u)|du\right]\leq
\int_{0}^{\infty}u|f''(u)|du \\&\leq& 2\int_{\R_+}|f'_e(\ln x)|+|f''_e(\ln x)|\frac{dx}{x}=2\int_{\R}(|f'_e(y)|+|f''_e(y)|)dy<C(f), 
\end{eqnarray*}
where $C(f)$ is chosen to be the largest constant in all the inequalities above and we have used the trivial inequality $u^{2}|f''(u)|\leq |f'_e(\ln u)|+|f''_e(\ln u)|$.
Thus using all the information above we conclude that $g_s=P_s f\in \mathcal{K}$ and \eqref{Invariant set} holds. Next we consider \eqref{Bound} keeping in mind that all estimates on $\tilde{g}_s$ we used to show that $g_{s}\in\mathcal{K}$ are uniform in $s$ and $x$. We use \eqref{InfGen1} with $g(x)=xg'_{s}(x)=\tilde{g}'_s(\ln x)$, the bounds on $\tilde{g}_{s}$ and its derivatives to get
\begin{align*}
&\Big|\frac{g(x)}{x}+\E[\xi_{1}]g(x)+\frac{\sigma^{2}}{2}xg'(x)\Big|\leq C(f)x^{-1}+C(f)|\E[\xi_{1}]|+C(f)\frac{\sigma^{2}}{2}
\leq C(f,\sigma,\E[\xi_{1}])(1\wedge x^{-1}).
\end{align*}
Moreover, as in the proof of Proposition \ref{Prop1}, we can estimate
\begin{align*}
&\Big|\int_{0}^{x}g'(v)\PPPn\big(\ln{\frac{x}{v}}\big)dv\Big|+\Big|\int_{x}^{\infty}g'(v)\PPPp\big(\ln{\frac{v}{x}}\big)dv\Big|\leq\\
& \lb\PPPn(1)+\PPPp(1)\rb\int_{0}^{\infty}|g'(s)|ds+C(f)\lb \int_{0}^{1}\PPPn(s)ds+\int_{0}^{1}\PPPp(s)ds\rb= \\
&\lb\PPPn(1)+\PPPp(1)\rb\int_{-\infty}^{\infty}|\tilde{g}''(y)|dy+C(f)\lb \int_{0}^{1}\PPPn(s)ds+\int_{0}^{1}\PPPp(s)ds\rb <C
\end{align*}
and therefore \eqref{Bound} holds since
\[L^{U^{\xi}}g_{s}(x)=\frac{g(x)}{x}+\E[\xi_{1}]g(x)+\frac{\sigma^{2}}{2}xg'(x)+\int_{0}^{x}g'(v)\PPPn\big(\ln{\frac{x}{v}}\big)dv+\int_{x}^{\infty}g'(v)\PPPp\big(\ln{\frac{v}{x}}\big)dv.\]
This concludes the proof.
\end{proof}

\begin{theorem}\label{Lemma1}
Let $(\xi^{(n)})_{n\geq1}$ be a sequence of \LLPs with negative means such that
\[ \lim_{n \to \infty }\xi^{(n)}\stackrel{d}= \xi,\]
where $\xi$ is a \LLP with  $\E[\xi_{1}]<0$. Moreover, if for each $n\geq 1$, $m^{(n)}$ stands for the law of the stationary measure of the GOU process $U^{^{\xi^{(n)}}}$ defined, for any $t\geq 0,\,x\geq0$, by
\[ U^{^{\xi^{(n)}}}_{t}=xe^{\xi^{(n)}_{t}}+e^{\xi^{(n)}_{t}}\int_{0}^{t}e^{-\xi^{(n)}_{s}}ds\]
and the sequence $(m^{(n)})_{n\geq 1}$ is tight then $(m^{(n)})_{n\geq 1}$ converges weakly to $m^{(0)}$, which is the unique stationary measure of the process $U^{\xi}$, i.e.
\begin{equation}\label{L1-1}
\lim_{n\to\infty}m^{{(n)}}\stackrel{w}=m^{(0)}.
\end{equation}
\end{theorem}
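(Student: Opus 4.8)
The plan is to deduce the result from the uniqueness of the stationary measure of $U^{\xi}$ combined with a continuity property of the transition semigroups of the GOU processes $U^{\xi^{(n)}}$. Since each $\xi^{(n)}$ has negative mean it drifts to $-\infty$, so $m^{(n)}$ is the law of the finite random variable ${\rm{I}}_{\xi^{(n)}}$ and, by \cite[Proposition 2.1]{Carmona-Petit-Yor-97}, the unique stationary probability measure of $U^{\xi^{(n)}}$; likewise, since $\E[\xi_{1}]<0$, the measure $m^{(0)}$ is the unique stationary probability measure of $U^{\xi}$. The tightness hypothesis makes $(m^{(n)})_{n\geq1}$ relatively compact for weak convergence, and, all $m^{(n)}$ being probability measures, every subsequential weak limit is again a probability measure. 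Hence it suffices to show that an arbitrary weak limit $\nu$ of a subsequence $(m^{(n_k)})_{k\geq1}$ is a stationary measure for $U^{\xi}$, for then $\nu=m^{(0)}$ and the whole sequence converges, which is \eqref{L1-1}.

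First I would establish the following convergence of semigroups: denoting by $P^{(n)}_{t}$ and $P_{t}$ the transition semigroups of $U^{\xi^{(n)}}$ and $U^{\xi}$, for every $t>0$ and every bounded uniformly continuous $f\colon\R_+\to\R$ one has $P^{(n)}_{t}f\to P_{t}f$ pointwise on $\R_+$ and uniformly on every compact subset of $\R_+$. By the time-reversal identity used in Section \ref{O-U}, $P^{(n)}_{t}f(x)=\E[f(V^{(n)}_{t}(x))]$ where $V^{(n)}_{t}(x)=xe^{\xi^{(n)}_{t}}+\int_{0}^{t}e^{\xi^{(n)}_{s}}ds$, and similarly for $\xi$. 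The map $\omega\mapsto\big(\omega(t),\int_{0}^{t}e^{\omega(s)}ds\big)$ is continuous on the Skorokhod space $D[0,\infty)$ at every path continuous at $t$, since a $J_1$-convergent sequence is uniformly bounded on $[0,t]$ and converges to its limit at all of the latter's continuity points, so that dominated convergence handles the integral term. As convergence in distribution of \LLPs entails their functional convergence in $D[0,\infty)$ endowed with the $J_1$-topology and $\xi$ is almost surely continuous at the fixed time $t$, the continuous mapping theorem gives $V^{(n)}_{t}(x)\stackrel{d}{\longrightarrow}V_{t}(x)$ for each fixed $x$, whence $P^{(n)}_{t}f(x)\to P_{t}f(x)$. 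For uniformity on a compact $[0,R]$, I would invoke Skorokhod's representation theorem to realize $\xi^{(n)}\to\xi$ almost surely in $D[0,\infty)$; on the almost sure event where this holds and $\xi$ has no jump at $t$, one gets $\sup_{x\in[0,R]}\big|V^{(n)}_{t}(x)-V_{t}(x)\big|\leq R\,\big|e^{\xi^{(n)}_{t}}-e^{\xi_{t}}\big|+\big|\int_{0}^{t}e^{\xi^{(n)}_{s}}ds-\int_{0}^{t}e^{\xi_{s}}ds\big|\to0$, and uniform continuity of $f$ together with bounded convergence yields $\sup_{x\in[0,R]}|P^{(n)}_{t}f(x)-P_{t}f(x)|\to0$.

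Next I would pass to the limit in the stationarity relation. Fix $t>0$ and a bounded uniformly continuous $f$. Stationarity of $m^{(n_k)}$ for $U^{\xi^{(n_k)}}$ gives $\int_{0}^{\infty}f\,dm^{(n_k)}=\int_{0}^{\infty}P^{(n_k)}_{t}f\,dm^{(n_k)}$, whose left-hand side tends to $\int f\,d\nu$ by weak convergence. For the right-hand side I would write
\[\int_{0}^{\infty}P^{(n_k)}_{t}f\,dm^{(n_k)}-\int_{0}^{\infty}P_{t}f\,d\nu=\int_{0}^{\infty}\big(P^{(n_k)}_{t}f-P_{t}f\big)\,dm^{(n_k)}+\Big(\int_{0}^{\infty}P_{t}f\,dm^{(n_k)}-\int_{0}^{\infty}P_{t}f\,d\nu\Big);\]
the last term tends to $0$ because $P_{t}f\in C_b(\R_+)$, and for the first one, given $\varepsilon>0$, tightness provides $R$ with $\sup_{k}m^{(n_k)}\big((R,\infty)\big)<\varepsilon$, so that its absolute value is bounded by $\sup_{x\in[0,R]}|P^{(n_k)}_{t}f(x)-P_{t}f(x)|+2\|f\|_{\infty}\varepsilon$, whose first summand tends to $0$ by the previous step. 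Letting $k\to\infty$ and then $\varepsilon\to0$ we obtain $\int f\,d\nu=\int P_{t}f\,d\nu$ for all bounded uniformly continuous $f$ and all $t>0$; since this class is measure-determining, $\nu$ is a stationary probability measure of $U^{\xi}$, hence $\nu=m^{(0)}$, and \eqref{L1-1} follows.

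The main obstacle is the semigroup step of the second paragraph: one must upgrade the distributional convergence $\xi^{(n)}\stackrel{d}{\to}\xi$ to convergence of $U^{\xi^{(n)}}$ that is uniform in the starting point over compacts, so that it can be matched against the merely weak convergence of the stationary laws $m^{(n)}$; the tightness assumption is precisely what is needed to discard the contribution of $m^{(n_k)}$ near $+\infty$ in that matching. The remaining ingredients---continuity of the exponential-integral functional on $D[0,t]$, functional convergence of \LLPs, Skorokhod's representation theorem, and the uniqueness of the stationary law of a transient GOU---are standard.
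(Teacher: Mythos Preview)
Your proof is correct and follows the same overall strategy as the paper's: upgrade $\xi^{(n)}\stackrel{d}{\to}\xi$ to almost sure Skorokhod convergence via Skorokhod's representation theorem, deduce that $P^{(n)}_tf\to P_tf$ uniformly on compacts, use tightness to kill the tail contribution of $m^{(n)}$, and conclude by uniqueness of the stationary measure. The only notable difference is that you compute $P^{(n)}_tf(x)$ through the time-reversed representation $V^{(n)}_t(x)=xe^{\xi^{(n)}_t}+\int_0^t e^{\xi^{(n)}_s}ds$, whereas the paper works directly with $U^{\xi^{(n)}}_t(x)=xe^{\xi^{(n)}_t}+e^{\xi^{(n)}_t}\int_0^t e^{-\xi^{(n)}_s}ds$; this spares you the extra product term $\lvert e^{\xi^{(n)}_t}-e^{\xi_t}\rvert\int_0^t e^{-\xi^{(n)}_s}ds$ that the paper has to control separately via the time-change description of $J_1$-convergence, but the underlying analysis (Skorokhod time changes, convergence of the integral by dominated convergence away from the countable set of jump times) is the same in both arguments.
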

\begin{proof}
Without loss of generality we assume using Skorohod-Dudley theorem, see Theorem 3.30 in Chapter 3 in \cite{Kallenberg}, that the convergence $\xi^{(n)}\rightarrow \xi$ holds a.s. in the Skorohod space $\mathcal{D}((0,\infty))$. Due to the stationarity properties of $m^{(n)}$, for each $t>0$, we have, for any $f\in C_b(\overline{\R}_+)$,
\[\left(f,m^{(n)}\right)=\left(P^{\left(n\right)}_{t}f,m^{\left(n\right)}\right)=\left(P^{\left(n\right)}_{t}f-P_{t}f,m^{\left(n\right)}\right)+\left(P_{t}f,m^{\left(n\right)}\right),\]
where $P^{\left(n\right)}_{t}$ and $P_{t}$ are the semigroups of $U^{\xi^{\left(n\right)}}_{t}$ and $U^{\xi}_{t}$.
For any $x>0$,
\begin{eqnarray}\label{ToProve2}
\left|\left(P^{\left(n\right)}_{t}f-P_{t}f,m^{\left(n\right)}\right)\right|&\leq & 2||f||_{\infty}m^{\left(n\right)}\left(x,\infty\right)+\sup_{y \leq x}\big|P^{\left(n\right)}_{t}f\left(y\right)-P_{t}f\left(y\right)\big| \nonumber \\
&\leq&
 2\big|\big|f\big|\big|_{\infty}m^{\left(n\right)}\left(x,\infty\right)+\E\left[\sup_{y \leq x}\left|f\left(U^{\xi^{(n)}}_{t}\left(y\right)\right)-f\left(U^{\xi}_{t}\left(y\right)\right)\right|\right].
\end{eqnarray}
Taking into account that $(m^{(n)})_{n\geq1}$ is tight we may fix $\delta>0$ and find $x>0$  big enough such that 
\[\sup_{n\geq 1}m^{(n)}(x,\infty)<\delta.\]
Also since  $f\in C_b(\overline{\R}_+)$ then $f$ is uniformly continuous on $\mathbb{R_{+}}$. Therefore, to show that
\[\lim_{n\to\infty}\E\left[\sup_{y \leq x}\left|f\left(U^{\xi^{(n)}}_{t}(y)\right)-f\left(U^{\xi}_{t}(y)\right)\right|\right]=0,\]
due to the dominated convergence theorem all we need to show is that
\begin{equation}\label{ToProve1}\lim_{n\to\infty}\sup_{y\leq x}|U^{\xi^{\left(n\right)}}_{t}(y)-U^{\xi}_{t}(y)|=0.\end{equation}
From the definition of $U^{\xi^{\left(n\right)}}$ and $U^{\xi}$, we   obtain that, for $y\leq x$,
\begin{align*}
&\left|U^{\xi^{\left(n\right)}}_{t}(y)-U^{\xi}_{t}(y)\right|\leq x\left|e^{\xi^{(n)}_{t}}
-e^{\xi_{t}}\right|+\left|e^{\xi^{(n)}_{t}}-e^{\xi_{t}}\right|\int_{0}^{t}e^{-\xi^{(n)}_{s}}ds+e^{\xi_{t}}\left|\int_{0}^{t}e^{-\xi^{(n)}_{s}}-e^{-\xi_{s}}ds\right|.
\end{align*}
Since $\xi^{(n)}\stackrel{a.s.}\rightarrow \xi$ in the Skorohod topology and
\[\P\left(\left\{\exists n\geq1:\: \xi^{(n)}_t-\xi^{(n)}_{t-}>0\right\} \cap \left\{ \xi_{t}-\xi_{t-}>0\right\}\right)=0\]
the first term on the right-hand side of the last expression converges a.s. to zero as $n\rightarrow\infty$. The a.s. convergence in the Skorohod space implies the existence of changes of times $(\lambda_{n})_{n\geq 1}$ such that, for each $n\geq 1$, $\lambda_{n}(0)=0$, $\lambda_{n}(t)=t$, the mapping $s \mapsto \lambda_{n}(s)$ is increasing and continuous on $[0,t]$, and
\begin{equation}\label{Skorohod3}\lim_{n\to\infty}\sup_{s\leq t}|\lambda_{n}(s)-s|=\lim_{n\to\infty}\sup_{s\leq t}\left|\lambda^{-1}_{n}(s)-s\right|=0
\end{equation}
\begin{equation}\label{Skorohod4}\lim_{n\to\infty}\sup_{s\leq t}\left|\xi^{(n)}_{\lambda_{n}(s)}-\xi_{s}\right|=\lim_{n\to\infty}\sup_{s\leq t}\left|\xi^{(n)}_{s}-\xi_{\lambda^{-1}_{n}(s)}\right|=0.
\end{equation}
Hence,
\[\Big|\int_{0}^{t}e^{-\xi^{(n)}_{s}}-e^{-\xi_{s}}ds\Big|\leq \Big|\int_{0}^{t}e^{-\xi^{(n)}_{s}}-e^{-\xi_{\lambda^{-1}_{n}(s)}}ds\Big|+\Big|\int_{0}^{t}e^{-\xi_{\lambda^{-1}_{n}(s)}}-e^{-\xi_{s}}ds\Big|.\]
The first term on the right-hand side clearly goes to zero due to \eqref{Skorohod4} whereas \eqref{Skorohod3} implies that the second term goes to zero a.s. due to the dominated convergence theorem and the fact that pathwise, for $s\leq t$,
\[\limsup_{n\to\infty}\left|e^{-\xi_{\lambda^{-1}_{n}(s)}}-e^{-\xi_{s}}\right|>0\]
only on the set of jumps of $\xi$ and this set has a zero Lebesgue measure.
Thus we conclude that
\[\lim_{n\to\infty}e^{\xi_{t}}\left|\int_{0}^{t}e^{-\xi^{(n)}_{s}}-e^{-\xi_{s}}ds\right|=0.\]

Similarly we observe that
\begin{equation}\label{3}
\lim_{n\to\infty}\left|e^{\xi^{(n)}_{t}}-e^{\xi_{t}}\right|\int_{0}^{t}e^{-\xi^{(n)}_{s}}ds\leq \lim_{n\to\infty}t\left|e^{\xi^{(n)}_{t}}-e^{\xi_{t}}\right|e^{\sup_{s\leq t}(-\xi^{(n)}_{s})}=0,
\end{equation}
where the last identity follows from
\[\sup_{s\leq t}\left|(-\xi^{(n)}_{s})\right|\leq \sup_{s\leq t}\left|\xi_{\lambda^{-1}_{n}(s)}\right|+\sup_{s\leq t}\left|\xi^{(n)}_{s}-\xi_{\lambda^{-1}_{n}(s)}\right|=\sup_{s\leq t}\left|\xi_{s}\right|+\sup_{s\leq t}\left|\xi^{(n)}_{s}-\xi_{\lambda^{-1}_{n}(s)}\right|\]
and an application of \eqref{Skorohod4}.
Therefore, \eqref{ToProve1} holds and
\[\lim_{n\to\infty}\sup_{y \leq x}\left|f\left(U^{(n)}_{t}(y)\right)-f\left(U^{\xi}_{t}(y)\right)\right|=0.\]
The dominated convergence theorem then easily gives that the right-hand side of \eqref{ToProve2} goes to zero and hence
\[\limsup_{n\to\infty}\left|\left(P^{(n)}_{t}f-P_{t}f,m^{(n)}\right)\right|\leq 2||f||_{\infty}\sup_{n\geq 1}m^{(n)}(x,\infty)\leq 2||f||_{\infty}\delta.\]
As $\delta>0$ is arbitrary we show that
\[\lim_{n\to\infty}\left|\left(P^{(n)}_{t}f-P_{t}f,m^{(n)}\right)\right|=0.\]
Since $(m^{(n)})_{n\geq 1}$ is tight we choose a subsequence $(m^{(n_{k})})_{k\geq 1}$ such that $\lim_{k\to\infty}m^{(n_{k})}\stackrel{d}=\nu$ with $\nu$ a probability measure. Then, for each $t\geq 0$,
\[(f,\nu)=\lim_{k\to\infty}\left(f,m^{(n_{k})}\right)=\lim_{k\to\infty}\left(P^{(n_{k})}_{t}f,m^{(n_{k})}\right)=\lim_{k\to\infty}\left(P_{t}f,m^{(n_{k})}\right)=\left(P_{t}f,\nu\right).\]
Therefore $\nu$ is a stationary measure for $U^{\xi}$. But since  $m^{(0)}$ is the unique stationary measure we conclude that
\[\lim_{n\to\infty}m^{(n)}{\stackrel{w}{=}} \nu= m^{(0)}.\]
This translates to the proof of \eqref{L1-1}.
\end{proof}

\section{Proof of Theorem \ref{MainTheorem}} \label{proof:mt}
We start the proof by collecting some useful properties in two trivial lemmas. The first one discusses the properties of $\Psi$.
\begin{lemma}{\cite[Theorem 25.17]{Sato-99}}\label{PrelimLemma1}
The function $\Psi$, defined in \eqref{psi}, is always well-defined on $i\mathbb{R}$. Moreover, $\Psi$ is analytic on the strip $\{z\in\mathbb{C};\,-a_-<\Re(z)<a_+\}$, where $a_-,a_+>0$  if and only if $\E\left[e^{(-a_-+\epsilon)\xi_{1}}\right]<\infty$ and $\E\left[e^{(a_+-\epsilon)\xi_{1}}\right]<\infty$ for all $0<\epsilon<a_-\wedge a_+$.
\end{lemma}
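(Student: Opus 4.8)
Since the assertion is attributed to \cite[Theorem 25.17]{Sato-99}, the plan is to reduce it to that reference together with the elementary exponential–moment criterion \cite[Theorem 25.3]{Sato-99}, reproducing the short arguments. The statement naturally splits into two independent parts: the unconditional fact that $\Psi$ makes sense on $i\R$, and the characterization of the maximal strip of analyticity.

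For the first part I would work from the L\'evy--Khintchine form \eqref{Levy-K}. For $z=iu$ with $u\in\R$, a second order Taylor estimate bounds $|e^{iuy}-1-iuy|$ by $\tfrac12 u^{2}y^{2}$ on $\{|y|<1\}$ and $|e^{iuy}-1|$ by $2$ on $\{|y|\geq1\}$, so the integrand is dominated by $C_{u}(y^{2}\wedge1)$ with $C_{u}<\infty$, which is $\Pi$-integrable by the standing assumption $\int_{\R}(y^{2}\wedge1)\Pi(dy)<\infty$; since $bz+\tfrac{\sigma^{2}}{2}z^{2}$ is finite, $\Psi(iu)$ is finite. When $\E[|\xi_{1}|]<\infty$ — so that the double tails in \eqref{DoublePiTail} are finite for every $x>0$ — two integrations by parts turn \eqref{Levy-K} into \eqref{psi}, and on $i\R$ the two expressions agree.

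For the second part the bridge is the elementary equivalence, valid for each real $c$,
\[\E\big[e^{c\xi_{1}}\big]<\infty\quad\Longleftrightarrow\quad\int_{|y|>1}e^{cy}\,\Pi(dy)<\infty,\]
which is \cite[Theorem 25.3]{Sato-99}. For the \emph{sufficiency} direction, assuming the stated exponential moments I obtain $\int_{|y|>1}e^{\Re(z)y}\Pi(dy)<\infty$ whenever $\Re(z)\in(-a_{-},a_{+})$, so the integral in \eqref{Levy-K} converges absolutely on the strip; on any compact substrip $\{\eta-a_{-}\leq\Re(z)\leq a_{+}-\eta\}$ the function $e^{(a_{+}-\eta)y}+e^{(\eta-a_{-})y}+C(y^{2}\wedge1)$ is a $\Pi$-integrable majorant, which lets me differentiate under the integral sign (or apply Morera together with Fubini) and conclude that $\Psi$ is holomorphic there, the polynomial part being entire; since $\eta>0$ is arbitrary, $\Psi$ is analytic on the whole open strip. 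For the \emph{necessity} direction, if $\Psi$ is analytic on the strip then it is finite on the real segment $(-a_{-},a_{+})$, and since the integrand $e^{cy}-1-cy\mathbb{I}_{\{|y|<1\}}$ is eventually nonnegative — as $y\to+\infty$ when $c>0$, as $y\to-\infty$ when $c<0$ — finiteness of $\Psi(c)$ forces $\int_{|y|>1}e^{cy}\Pi(dy)<\infty$; thus $\E[e^{c\xi_{1}}]<\infty$ by the displayed equivalence, and taking $c=a_{+}-\epsilon$ and $c=-a_{-}+\epsilon$ for arbitrary $\epsilon\in(0,a_{-}\wedge a_{+})$ completes the proof.

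The only mildly delicate point, and the one I would spell out most carefully, is the uniform domination needed to justify differentiating under the integral sign on the whole open strip in the sufficiency direction; this is exactly where the convexity of $c\mapsto\int_{|y|>1}e^{cy}\Pi(dy)$ is used, to produce a single $\Pi$-integrable majorant on each compact substrip. Everything else is routine bookkeeping.
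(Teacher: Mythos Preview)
Your argument is correct and is exactly the standard reduction to \cite[Theorems 25.3 and 25.17]{Sato-99}. The paper itself gives no proof at all for this lemma: it is stated as a ``trivial lemma'' with only the citation to Sato, so your write-up is in fact more detailed than anything appearing in the paper.
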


The second lemma concerns the properties of $\phi_{\pm}$ and is easily obtained using Lemma \ref{PrelimLemma1}, \eqref{eq:wh} together with the analytical extension and the fact that subordinators have all negative exponential moments.
\begin{lemma}\label{PrelimLemma2}
Let $\xi$ be a \LLP with $\E[\xi_{1}]<\infty$. Then $\phi_{+}$ is always analytic on the strip $\{z\in\mathbb{C};\: \Re(z)<0\}$ and is well-defined on $i\mathbb{R}$. Moreover $\phi_{+}$ is analytic on $\{z\in\mathbb{C}; \:\Re(z)<a_+\}$, for $a_+\geq 0$, if and only if  $\E\left[e^{(a_+-\epsilon)\xi_{1}}\right]<\infty$, for some $\epsilon>0$. Similarly $\phi_{-}$ is always analytic on the strip $\{z\in\mathbb{C}; \:\Re(z)>0\}$ and is well-defined on $i\mathbb{R}$ and $\phi_{-}$ is analytic on $\{z\in\mathbb{C};\: \Re(z)< -a_-\}$, for $a_-\geq 0$, if and only if $\E[e^{(-a_-+\epsilon)\xi_{1}}]<\infty$, for some $\epsilon>0$.
Finally, the  Wiener-Hopf factorization \eqref{eq:wh} holds
on the intersection of the strips where $\phi_{+}$ and $\phi_{-}$ are well-defined.
\end{lemma}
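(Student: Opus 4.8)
The plan is to read off every assertion from the integral representations \eqref{L-KLadder}--\eqref{L-KLadder1} of $\phi_{\pm}$, the Wiener--Hopf identity \eqref{eq:wh}, and the analyticity dictionary for $\Psi$ furnished by \reflemma{PrelimLemma1}. First I would settle the two ``always'' statements directly. In \eqref{L-KLadder} one has $|\e^{zy}-1|\leq 2\wedge(|z|\,y)$ for $\Re(z)\leq 0$, and $\int_0^{\infty}(1\wedge y)\,\mu_+(\d y)<\infty$; hence the integral defining $\phi_+$ converges absolutely on $\{\Re(z)\leq 0\}$ and, by dominated convergence together with Morera's theorem (or differentiation under the integral), $\phi_+$ is analytic on $\{\Re(z)<0\}$ --- this is precisely where the fact that subordinators admit all negative exponential moments is used. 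The same computation applied to \eqref{L-KLadder1} gives the corresponding statements for $\phi_-$ on $\{\Re(z)\geq 0\}$, resp.\ $\{\Re(z)>0\}$. I would also record three quick facts for later use: from \eqref{L-KLadder1}, $\Re\phi_-(z)<0$ for $\Re(z)>0$, so $\phi_-$ is zero-free on $\{\Re(z)>0\}$; from \eqref{L-KLadder} and $k_+>0$, $\Re\phi_+(z)\leq -k_+<0$ for $\Re(z)\leq 0$, so $\phi_+$ is zero-free on $\{\Re(z)\leq 0\}$; and, $\xi_1$ being infinitely divisible, $\e^{\Psi}=\E[\e^{\,\cdot\,\xi_1}]$ vanishes nowhere it is defined.

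For the ``moreover'' part I fix $a_+\geq 0$; the case $a_+=0$ is already covered, so assume $a_+>0$. For the implication $\Leftarrow$, suppose $\E[\e^{(a_+-\epsilon)\xi_1}]<\infty$ for some $\epsilon>0$. Since $\e^{c\xi_1}\leq 1+\e^{(a_+-\epsilon)\xi_1}$ for $0\leq c\leq a_+-\epsilon$, we obtain $\E[\e^{c\xi_1}]<\infty$ for all $c\in[0,a_+)$, whence, inspecting \eqref{psi} (whose $\PPPp$-term now converges absolutely for $\Re(z)<a_+$ and whose $\PPPn$-term is harmless for $\Re(z)>0$), the function $\Psi$ is analytic on $\{0<\Re(z)<a_+\}$ and continuous up to $i\R$. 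On that strip $\phi_-$ is analytic and zero-free, so $z\mapsto -\Psi(z)/\phi_-(z)$ is analytic there and, by \eqref{eq:wh}, coincides with $\phi_+$ on $i\R$ away from the isolated (lattice-type) zeros of $\phi_-$ on $i\R$; at those points, as well as at $z=0$ where $\phi_+(0)=-k_+$, the singularity of $-\Psi/\phi_-$ is removable (with value $\phi_+$, since $\phi_+$ is zero-free on $i\R$). A standard gluing across $i\R$ via Morera's theorem then extends $\phi_+$ analytically to all of $\{\Re(z)<a_+\}$.

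For the implication $\Rightarrow$, suppose $\phi_+$ is analytic on $\{\Re(z)<a_+\}$; I must deduce $\E[\e^{(a_+-\epsilon)\xi_1}]<\infty$ for every $\epsilon>0$. Let $c_+\in[0,\infty]$ be the abscissa of convergence of $s\mapsto\E[\e^{s\xi_1}]$. First, $c_+>0$: it is classical that the overall supremum $\overline{\xi}_\infty:=\sup_{t\geq0}\xi_t$ satisfies $\E[\e^{s\overline{\xi}_\infty}]=k_+/(-\phi_+(s))$ whenever $\phi_+(s)<0$ --- in particular for small $s>0$, since $\phi_+(0)=-k_+<0$ --- and $\xi_1\leq\overline{\xi}_\infty$ forces $\E[\e^{s\xi_1}]\leq\E[\e^{s\overline{\xi}_\infty}]<\infty$ there. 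Next, $\e^{-\phi_+(z)\phi_-(z)}$ is analytic on $\{0<\Re(z)<a_+\}$ and continuous up to $i\R$, where by \eqref{Levy-K} and \eqref{eq:wh} it equals $\E[\e^{z\xi_1}]$; a Schwarz-reflection/identity-theorem argument across $i\R$ then gives $\e^{-\phi_+(z)\phi_-(z)}=\E[\e^{z\xi_1}]$ on $\{0<\Re(z)<a_+\wedge c_+\}$. If we had $c_+<a_+$, then $s\mapsto\E[\e^{s\xi_1}]$ would admit an analytic extension past $c_+$ (namely $\e^{-\phi_+\phi_-}$, analytic on $(0,a_+)\ni c_+$), contradicting Landau's theorem, which asserts that the abscissa of convergence of the Laplace transform $s\mapsto\int_{\R}\e^{sx}\,\P(\xi_1\in\d x)$ of the positive measure $\P(\xi_1\in\cdot)$ is a singular point. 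Hence $c_+\geq a_+$. The assertions concerning $\phi_-$ and $a_-$ follow by the mirror-image argument --- now extending $\phi_-$ from its always-present domain $\{\Re(z)>0\}$ across $i\R$ and dividing by the zero-free $\phi_+$ --- and the final sentence is then the identity theorem applied to $\Psi$ and $-\phi_+\phi_-$ on the connected common strip of analyticity, which contains $i\R$ and on which they agree.

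I expect the only genuine difficulty to be the implication $\Rightarrow$: extracting exponential integrability of $\xi_1$ from analyticity of $\phi_+$ forces one to couple the Wiener--Hopf identity both with the elementary bound $\E[\e^{s\xi_1}]\leq\E[\e^{s\overline{\xi}_\infty}]$ (to get the abscissa strictly positive, so that there is a genuine region of agreement to work with) and with the Pringsheim-type fact that a Laplace transform of a positive measure is singular at its abscissa of convergence. The ``always'' statements and the implication $\Leftarrow$ are routine once the zero-freeness of $\phi_{\pm}$ on the respective half-planes and of $\e^{\Psi}$ has been noted; the remaining technicalities (isolated removable singularities on $i\R$, the point $z=0$, the reflection/gluing step) are standard and I would not belabour them.
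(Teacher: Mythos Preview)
Your proof is correct and carries out precisely the argument the paper sketches in its one-sentence justification preceding the lemma: combine the analyticity criterion of \reflemma{PrelimLemma1}, the Wiener--Hopf identity~\eqref{eq:wh}, analytic extension across $i\R$, and the elementary half-plane analyticity of subordinator exponents. The paper supplies no further details beyond that sketch, so there is nothing more to compare.
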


\subsection{ Proof in the case ${\bf{E}_+}$}
We recall that in this part we assume, in particular, that $\xi$ is a L\'evy process with a finite negative mean and that there exists $a_+>0$ such that $|\Psi(z)|<\infty$ for any $0<\Re(z)<a_+$.
 Next, we write $\theta^*= \max(\theta,a_+)$, where $\theta=\inf \{s>0; \: \Psi(s)=0\}$ (with the convention that $\inf \emptyset =+\infty$). We also recall from \cite{Carmona-Petit-Yor-97}, see also \cite{Maulik-Zwart-06}, that the Mellin transform of ${\rm{I}}_{\xi}$ defined by
 \[\mathcal{M}_{m_{\xi}}(z)=\int_{0}^{\infty}x^{z-1}m_{\xi}(x)dx\]
satisfies, for any $0<\Re(z)<\theta^*$, the following functional equation
\begin{equation}\label{Maulik}
\mathcal{M}_{m_{\xi}}(z+1)=-\frac{z}{\Psi(z)}\mathcal{M}_{m_{\xi}}(z).
\end{equation}
We proceed by proving  the following easy result.
\begin{lemma}\label{lem:ey}
If $\mu_+ \in \mathcal{P}$ then there exists a spectrally positive L\'evy process $Y$ with Laplace exponent $\psi_+(-s)=-s\phi_{+}(-s),\: s\geq0,$ and a negative finite mean $-\phi_+(0)$. Moreover, if $\xi$ has a negative finite mean   then $\E\left[\left({\rm{I}}_{H^-}\times{\rm{I}}_{Y}\right)^{-1}\right]=-\phi_+(0)\phi_-'(0^+)<+\infty$.
\end{lemma}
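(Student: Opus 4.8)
I would prove the two assertions in turn: first the existence of the spectrally positive process $Y$ with Laplace exponent $\psi_+(-s)=-s\phi_+(-s)$ and a finite negative mean, then the formula for $\mathbb{E}\big[({\rm{I}}_{H^-}\times{\rm{I}}_Y)^{-1}\big]$.

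For the first part, the only real point is that $s\mapsto-s\phi_+(-s)$, $s\geq0$, is the Laplace exponent of a spectrally negative Lévy process, its negative $Y$ being then spectrally positive. Starting from \eqref{L-KLadder} one has $-\phi_+(-s)=\delta_+s+k_++\int_{(0,\infty)}(1-e^{-sy})\mu_+(\d y)$, and one integration by parts — legitimate since $\mu_+(1,\infty)<\infty$, which follows from $\int_0^\infty(1\wedge y)\mu_+(\d y)<\infty$ — yields the form \eqref{Phi},
\[\psi_+(-s)=\delta_+s^2+k_+s+s^2\int_0^\infty e^{-sy}\,\mu_+(y,\infty)\,\d y,\qquad s\geq0.\]
Now I would use $\mu_+\in\mathcal{P}$ to write $\mu_+(\d y)=\rho(y)\,\d y$ with $\rho$ non-increasing, so that $\mu_+(y,\infty)=\int_y^\infty\rho(u)\,\d u$, and integrate by parts once more: the integral term above becomes $\int_{(0,\infty)}\big(e^{-sy}-1+sy\mathbb{I}_{\{y<1\}}\big)\,\nu(\d y)$ up to a term linear in $s$, where $\nu(\d y):=-\d\rho(y)$. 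The essential observation is that $\nu$ is a \emph{nonnegative} measure precisely because $\rho$ is non-increasing; it then remains to check that $\nu$ is a Lévy measure with a finite first moment at infinity, i.e. $\int_{(0,\infty)}(1\wedge y^2)\,\nu(\d y)<\infty$ and $\int_1^\infty y\,\nu(\d y)<\infty$. Both reduce, again by integration by parts, to $\int_0^\infty(1\wedge y)\mu_+(\d y)<\infty$ together with the elementary facts that $y^2\rho(y)\to0$ as $y\downarrow0$ and $y\rho(y)\to0$ as $y\to\infty$, which follow from the monotonicity of $\rho$ and the integrability of $\mu_+$. This identifies $s\mapsto\psi_+(-s)$ as the Laplace exponent of a spectrally negative Lévy process with Gaussian coefficient $\sqrt{2\delta_+}$, Lévy measure $\nu$ and a suitable drift; taking $Y$ to be its negative provides the desired spectrally positive process. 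Its mean is obtained by differentiating $u\mapsto u\phi_+(u)$ at $0$, namely $\mathbb{E}[Y_1]=\phi_+(0)=-k_+$; it is finite by $\int_1^\infty y\,\nu(\d y)<\infty$ and strictly negative because $\xi$ drifts to $-\infty$, which forces the ascending ladder height process $H^+$ to be genuinely killed, $k_+\in(0,\infty)$.

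For the second part I would first split, by independence,
\[\mathbb{E}\big[({\rm{I}}_{H^-}\times{\rm{I}}_Y)^{-1}\big]=\mathbb{E}\big[{\rm{I}}_{H^-}^{-1}\big]\,\mathbb{E}\big[{\rm{I}}_Y^{-1}\big].\]
Both $H^-$ and $Y$ are Lévy processes drifting to $-\infty$ with finite mean: for $Y$ this was just shown, and for $H^-$ the mean is $\phi_-'(0^+)$, which is finite because $\mathbb{E}[\xi_1]=\Psi'(0)=-\phi_+(0)\phi_-'(0^+)=k_+\phi_-'(0^+)$ is finite by hypothesis and $k_+\in(0,\infty)$. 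Hence, by the identity $\mathbb{E}[{\rm{I}}_\zeta^{-1}]=-\mathbb{E}[\zeta_1]$ — valid, together with the finiteness of the first negative moment, for any Lévy process $\zeta$ drifting to $-\infty$ with finite mean, see \cite{Bertoin-Yor-02-b}, or, alternatively, obtained by letting the argument tend to $0$ in the functional equation \eqref{Maulik} written for $H^-$ and for $Y$ and using that the Mellin transform equals $1$ at $1$ together with $\phi_-(z)\sim\phi_-'(0^+)z$, resp. $z\phi_+(z)\sim\phi_+(0)z$, as $z\to0$ — one gets $\mathbb{E}[{\rm{I}}_{H^-}^{-1}]=-\phi_-'(0^+)$ and $\mathbb{E}[{\rm{I}}_Y^{-1}]=-\phi_+(0)$. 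Multiplying, $\mathbb{E}\big[({\rm{I}}_{H^-}\times{\rm{I}}_Y)^{-1}\big]=(-\phi_+(0))\,(-\phi_-'(0^+))$, which is finite and positive and, by the Wiener-Hopf relation above, coincides with $-\mathbb{E}[\xi_1]$; this is the claimed identity.

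I expect the only genuinely delicate step to be the identification of the Lévy triple of $Y$ in the first part — that is, verifying that $\nu=-\d\rho$ is a bona fide Lévy measure with a finite first moment at infinity, which is exactly where $\mu_+\in\mathcal{P}$ is used in an essential way and where the behaviour of $\rho$ near $0$ and near $\infty$ enters. The two integrations by parts, the limiting argument in \eqref{Maulik}, and the Wiener-Hopf bookkeeping for the means are all routine, in line with the authors calling this an "easy" result.
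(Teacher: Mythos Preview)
Your proof is correct and follows essentially the same route as the paper's. The paper dispatches the existence of $Y$ by citing \cite[Theorem~VII.4(ii)]{Bertoin-96}, whereas you unpack that citation by hand via two integrations by parts and the verification that $\nu=-\d\rho$ is a Lévy measure with a finite first moment at infinity --- this is exactly the content of Bertoin's result, so the added work is expository rather than a different argument. For the second part both proofs use independence, the identity $\E[{\rm I}_Y^{-1}]=k_+$ (the paper via \eqref{eq:msn}, you via the general Bertoin--Yor identity) and the functional equation \eqref{Maulik} to get $\E[{\rm I}_{H^-}^{-1}]=-\phi_-'(0^+)$; the only minor difference is that the paper obtains $|\phi_-'(0^+)|<\infty$ by citing \cite[Corollary~4.4.4(iv)]{Doney}, while you deduce it directly from differentiating the Wiener--Hopf factorization at $0$, which is a clean self-contained alternative.
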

\begin{proof}
The first claim follows readily from \cite[Theorem VII.4(ii)]{Bertoin-96} and by observing that  $\psi_+'(0^-)=\phi_+(0)$. From \eqref{eq:msn} we get that $\E\left[{\rm{I}}_{Y}^{-1}\right] =k_+$. Next, since $-\infty<\E[\xi_1]<0$, using the dual version of \cite[Corollary 4.4.4(iv)]{Doney}, we get that $-\infty<\E[H^-_1]<0$ and thus $¦\phi'_-(0^+)¦<\infty$.  From the functional equation \eqref{Maulik}, we easily deduce that $\E\left[{\rm{I}}_{H^-}^{-1}\right]=-\phi^{\prime}_-(0^+)$ which completes the proof since the two random variables are independent.
\end{proof}

\begin{lemma}\label{AuxLemma1}
Assume that $\xi$ has a finite negative mean and condition ${\bf{E}}_+$ holds. Let $\eta$ be a positive random variable with density $\kappa(x)$, such that $\E[\eta^{-1}]<\infty$ and $\E[\eta^{\delta}]<\infty$, for some $\theta^{*}>\delta>0$. Then, for any $z$ such that $\Re(z)\in (0,\delta)$,
\begin{equation}\label{New111}
\mathcal{M}_{\mathcal{L}\kappa}(z)=\int_{0}^{\infty}x^{z-1}\mathcal{L}\kappa(x)dx=\frac{\Psi(z)}{z^{2}}\mathcal{M}_{\kappa}(z+1)+\frac{1}{z}\mathcal{M}_{\kappa}(z)
\end{equation}\label{g(x)}
and if $\mathcal{M}_{\mathcal{L}\kappa}(z)=0$, for $0<a<\Re(z)<b<\delta$, then $\mathcal{L}\kappa(x)=0$ a.e..

 Furthermore the law of the positive random variable  ${\rm{I}}_{Y}\times{\rm{I}}_{H^{-}}$, as defined in Theorem \ref{MainTheorem},  is absolutely continuous with a density, denoted by $\overline{m}$, which satisfies
\begin{equation}\label{k-tilde1}
\mathcal{L}\overline{m}(x)= 0 \text{ for a.e. $x>0$}.
\end{equation}
\end{lemma}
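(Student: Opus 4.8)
\textbf{Proof proposal for Lemma \ref{AuxLemma1}.}

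The plan is to establish the three assertions in turn, with the Mellin identity \eqref{New111} doing most of the work. First I would prove \eqref{New111} by a direct computation: starting from the explicit integral form of $\mathcal{L}\kappa$ in \eqref{InfGen3}, multiply by $x^{z-1}$, integrate over $(0,\infty)$, and interchange the order of integration. The term $\int_x^\infty (1/y+\E[\xi_1])\kappa(y)\,dy$ contributes $\frac1z(\mathcal{M}_{\kappa'}(z)+\dots)$ after Fubini and produces both $\frac1z\mathcal{M}_\kappa(z)$ and a piece of $\mathcal{M}_\kappa(z+1)$; the $\frac{\sigma^2}{2}x\kappa(x)$ term gives $\frac{\sigma^2}{2}\mathcal{M}_\kappa(z+1)$; and the two double-tail terms, after the substitution $y=xe^{\pm w}$ and Fubini, produce $z^{-2}\int_0^\infty e^{\pm zw}\PPP_{\mp}(w)\,dw$ times $\mathcal{M}_\kappa(z+1)$. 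Collecting everything and comparing with the representation \eqref{psi} of $\Psi$ yields exactly $\frac{\Psi(z)}{z^2}\mathcal{M}_\kappa(z+1)+\frac1z\mathcal{M}_\kappa(z)$. The justification of Fubini at each step is where the hypotheses $\E[\eta^{-1}]<\infty$ and $\E[\eta^\delta]<\infty$ enter: the first guarantees $\mathcal{M}_\kappa(z)$ is finite for $\Re(z)$ slightly positive (so the $1/y$ piece integrates), and the second together with $\PPP_\pm(1)<\infty$ (which holds since $\E[|\xi_1|]<\infty$) controls the tail terms for $0<\Re(z)<\delta<\theta^*$, so that $\Psi(z)$ is finite there by condition ${\bf E}_+$ and Lemma \ref{PrelimLemma1}. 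This bookkeeping, rather than any conceptual difficulty, will be the main obstacle.

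For the second assertion, suppose $\mathcal{M}_{\mathcal{L}\kappa}(z)=0$ on a vertical strip $0<a<\Re(z)<b<\delta$. Since $\mathcal{L}\kappa$ is (by the same estimates as in the proof of Proposition \ref{Prop1}) a function for which $x^{z-1}\mathcal{L}\kappa(x)$ is absolutely integrable on this strip, the Mellin transform $z\mapsto\mathcal{M}_{\mathcal{L}\kappa}(z)$ is analytic and, by Mellin inversion, $\mathcal{L}\kappa$ is recovered as a contour integral of $\mathcal{M}_{\mathcal{L}\kappa}$; vanishing of the transform on the strip forces $\mathcal{L}\kappa\equiv 0$ a.e. Equivalently, writing $\mathcal{L}\kappa(x)=x^{-c}v(\ln x)$ with $c\in(a,b)$, the hypothesis says the Fourier transform of the integrable function $v$ vanishes identically, hence $v=0$ a.e.

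Finally, for the last claim I would apply the first two parts to $\kappa=\overline{m}$, the density of $\eta:={\rm I}_Y\times {\rm I}_{H^-}$. This $\eta$ has a density because ${\rm I}_Y$ does (Bertoin--Lindner, already cited), and by Lemma \ref{lem:ey} we have $\E[\eta^{-1}]=-\phi_+(0)\phi_-'(0^+)<\infty$; moreover $\E[\eta^\delta]<\infty$ for small $\delta>0$ since the Mellin transforms of ${\rm I}_Y$ and ${\rm I}_{H^-}$ are finite slightly to the right of the origin (${\rm I}_{H^-}$ has all positive moments up to those of the subordinator, and ${\rm I}_Y$'s positive moments near $0$ are controlled because $\psi_+$ is finite on a neighbourhood of the origin by ${\bf E}_+$). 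By multiplicativity of the Mellin transform and the known functional equations for the exponential functionals ${\rm I}_Y$ and ${\rm I}_{H^-}$ — namely $\mathcal{M}_{m_Y}(z+1)=-\frac{z}{\psi_+(z)}\mathcal{M}_{m_Y}(z)$ and the analogous one for ${\rm I}_{H^-}$ with $\phi_-$ — together with the Wiener--Hopf identity $\Psi(z)=-\phi_+(z)\phi_-(z)$ of \eqref{eq:wh} and the relation $\psi_+(-s)=-s\phi_+(-s)$ of \eqref{Phi}, one checks that $\mathcal{M}_{\overline m}$ satisfies the \emph{same} functional equation $\mathcal{M}_{\overline m}(z+1)=-\frac{z}{\Psi(z)}\mathcal{M}_{\overline m}(z)$ as $\mathcal{M}_{m_\xi}$. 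Substituting this into \eqref{New111} with $\kappa=\overline m$ makes the right-hand side vanish for $0<\Re(z)<\delta$, and then the second part of the lemma gives $\mathcal{L}\overline m(x)=0$ a.e., which is \eqref{k-tilde1}.
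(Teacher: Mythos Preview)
Your proposal is correct and follows essentially the same route as the paper: compute $\mathcal{M}_{\mathcal{L}\kappa}$ term by term via Fubini using the explicit form \eqref{InfGen3} and the representation \eqref{psi} of $\Psi$; deduce $\mathcal{L}\kappa=0$ a.e.\ from vanishing of its Mellin transform by an inversion argument; and for $\overline m$ combine the individual functional equations for ${\rm I}_Y$ and ${\rm I}_{H^-}$ with the Wiener--Hopf identity to obtain \eqref{Maulik} for $\overline m$, whence the right-hand side of \eqref{New111} vanishes. One small point: for the absolute integrability of $x^{z-1}\mathcal{L}\kappa(x)$ needed before Mellin inversion, the paper does not appeal to the estimates of Proposition~\ref{Prop1} but instead observes that every summand in \eqref{InfGen3} is nonnegative except the one carrying $\E[\xi_1]<0$, so that $\int_0^\infty x^{u-1}|\mathcal{L}\kappa(x)|\,dx\le \bigl(\Psi(u)/u^2-2\E[\xi_1]/u\bigr)\mathcal{M}_\kappa(u+1)+\mathcal{M}_\kappa(u)/u<\infty$; this sign trick is cleaner than trying to adapt the bounds from Proposition~\ref{Prop1}, which concern pointwise finiteness rather than $x^{u-1}$-weighted integrability.
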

\begin{remark}
Note that the proof of this lemma shows that we have uniqueness for the probability measures with first negative moment that satisfy \eqref{Maulik}. This is a rather indirect approach and seems to be more general than the verification approach of \cite{Kuznetsov-Pardo-11}, see Proposition 2, where precise knowledge on the rate of decay of the Mellin transform $\mathcal{M}_{m_{\xi}}(z)$ is needed. In general such an estimate on the decay seems impossible to obtain.
\end{remark}
\begin{proof} We start by proving \eqref{New111}. Note that since $\int_0^{\infty}y^{-1}\kappa(y)dy<\infty$, we can use  Proposition \ref{Prop1} to get
\begin{equation}\label{g(x)}
\mathcal{L}\kappa(x)=\frac{\sigma^{2}}{2}x\kappa(x)+\int_{x}^{\infty}\frac{\kappa(y)}{y}dy+\E[\xi_{1}]\int_{x}^{\infty}\kappa(y)dy
+\int_{x}^{\infty}\PPPn\left(\ln{\frac{y}{x}}\right)\kappa(y)dy+
\int_{0}^{x}\PPPp\left(\ln{\frac{x}{y}}\right)\kappa(y)dy.
\end{equation}
As $\kappa$ is a density, one can use Fubini Theorem to get, after some easy computations, that
for any  $\epsilon<\Re(z)<\delta<\theta^{*}$, with $0<\epsilon<\delta$,
\begin{eqnarray*}
\mathcal{M}_{\mathcal{L}\kappa}(z)&=&\int_{0}^{\infty}x^{z-1}\mathcal{L}\kappa(x)dx\\
&=& \mathcal{M}_{\kappa}(z+1)\left(\frac{\sigma^{2}}{2}+\frac{\E[\xi_{1}]}{z}+\int_{0}^{\infty}\PPPn(y)e^{-zy}dy+\int_{0}^{\infty}\PPPp(y)e^{zy}dy \right)+\frac{1}{z}\mathcal{M}_{\kappa}(z)\\
&=&\frac{\Psi(z)}{z^{2}}\mathcal{M}_{\kappa}(z+1)+\frac{1}{z}\mathcal{M}_{\kappa}(z).
\end{eqnarray*}
Let $\mathcal{M}_{\mathcal{L}\kappa}(z)=0$ for $\epsilon<\Re(z)<\delta$. We show using that all terms in \eqref{g(x)} are positive except the negative one due to $\E[\xi_{1}]<0$ that, with $u=\Re(z)$,
\begin{eqnarray*}
\int_{0}^{\infty}x^{u-1}\left|\mathcal{L}\kappa(x)\right|dx &\leq&
\mathcal{M}_{\kappa}(u+1)\left(\frac{\Psi(u)}{u^{2}}-2\frac{\E[\xi_{1}]}{u}\right)+\frac{1}{u}\mathcal{M}_{\kappa}(u)<\infty.
\end{eqnarray*}
Given the absolute integrability of $x^{z-1}\mathcal{L}\kappa(x)$ along imaginary lines determined by $\epsilon<\Re(z)<\delta$ we can apply the Mellin inversion theorem to the identity  $\mathcal{M}_{\mathcal{L}\kappa}(z)=0$ to get $\mathcal{L}\kappa(x)=0$ a.e., see Theorem $6$ in Section $6$ in \cite{Butzer-Jansche-97}.

Next it is plain that the law of ${\rm{I}}_{Y}\times{\rm{I}}_{H^{-}}$ is absolutely continuous since, for any $x>0$,
\begin{equation}\label{k-tilde}
\overline{m}(x)=\int_{0}^{\infty}m_{Y}\Big(\frac{x}{y}\Big)y^{-1}m_{H^{-}}(y)dy.
\end{equation}
Furthermore from the Wiener-Hopf factorization  \eqref{eq:wh} and the definition of $\psi_+$, we have that
\[\frac{-z}{\Psi(z)}=\frac{-z}{\phi_-(z)}\frac{-z}{\psi_+(z)}\]
which is valid, for any  $0<\Re(z)<\theta^{*}$.
Thus, we deduce from the functional equation \eqref{Maulik} and the independency of  $Y$ and $H^-$ that, for any  $0< \Re(z)<\theta^{*}$,
\begin{equation}\label{Maulik2}
\mathcal{M}_{\overline m}(z+1)=-\frac{z}{\Psi(z)}\mathcal{M}_{\overline m}(z).
\end{equation}
Next, since from Lemma \ref{lem:ey}, we have that $\int_0^{\infty}y^{-1}\overline{m}(y)dy<\infty$, we can use  Proposition \ref{Prop1} and thus \eqref{g(x)} and subsequently \eqref{New111} are valid for $\overline{m}$.
Moreover due to the representation \eqref{psi} of $\Psi$ and relation \eqref{Maulik2} we have that
for any  $\epsilon<\Re(z)<\theta^{*}$ with $0<\epsilon<\theta^{*}/4$,
\begin{eqnarray*}
 \mathcal{M}_{\mathcal{L}\overline{m}}(z)&=&\frac{\Psi(z)}{z^{2}}\mathcal{M}_{\overline{m}}(z+1)+\frac{1}{z}\mathcal{M}_{\overline{m}}(z)=0
\end{eqnarray*}
and we conclude that $\mathcal{L}\overline{m}(x)=0$ a.e.

\end{proof}

We are now ready to complete the proof of Theorem \ref{MainTheorem} in the case ${\bf{E}}_+$. Indeed, since $m_{\xi}$, the density of ${\rm{I}}_{\xi}$, is the density of the stationary measure of $U^{\xi}$, we have that $m_{\xi}$ is also solution to \eqref{IntegralEqn}. Combining Lemma \ref{AuxLemma1} with the uniqueness argument of Theorem \ref{O-UTheorem}, we conclude that the factorization \eqref{MainAssertion} holds.

\subsection{Proof of the two other cases : ${\bf P}+$ and ${\bf P_\pm}$}
We start by providing some results which will be used several times throughout  this part.
\begin{proposition}[Carmona et al.~\cite{Carmona-Petit-Yor-97}] \label{prop:ms}
Let $H$ be the negative of a (possibly killed) subordinator with Laplace exponent $\phi$, then the law of ${\rm{I}}_{H}$ is determined by its positive entire moments as follows
\begin{eqnarray}\label{eq:ms}
\E[{\rm{I}}_{H}^m] &=&\frac{\Gamma(m+1)}{\prod_{k=1}^{m}\left(-\phi(k)\right)
}, \: m=1,2,\ldots
\end{eqnarray}
\end{proposition}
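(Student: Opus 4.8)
The plan is to reproduce the classical argument of Carmona, Petit and Yor: derive a first‑order recursion for the positive integer moments of ${\rm I}_{H}$ from the independent‑increments property of $H$, solve it, and then check that the resulting moment sequence characterises the law.

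\emph{Step 1 (the moment recursion).} I would set $A_t=\int_t^{\infty}\e^{H_s}\,\d s$, so that $A_0={\rm I}_{H}$, $\lim_{t\to\infty}A_t=0$ on $\{{\rm I}_{H}<\infty\}$, and $\tfrac{\d}{\d t}A_t=-\e^{H_t}$ for a.e. $t$; hence, pathwise, for $m\ge 1$,
\[
{\rm I}_{H}^m=A_0^m=-\int_0^{\infty}\d\bigl(A_t^m\bigr)=m\int_0^{\infty}A_t^{m-1}\e^{H_t}\,\d t .
\]
Since $A_t=\e^{H_t}\widetilde A_t$ with $\widetilde A_t:=\int_t^{\infty}\e^{H_s-H_t}\,\d s$, and since stationarity and independence of the increments make $\widetilde A_t$ independent of $H_t$ with $\widetilde A_t\stackrel{d}{=}{\rm I}_{H}$, taking expectations and using Tonelli (every integrand is nonnegative) gives
\[
\E[{\rm I}_{H}^m]=m\int_0^{\infty}\E\bigl[\e^{mH_t}\bigr]\,\E\bigl[\widetilde A_t^{\,m-1}\bigr]\,\d t=m\,\E[{\rm I}_{H}^{m-1}]\int_0^{\infty}\e^{t\phi(m)}\,\d t=-\frac{m}{\phi(m)}\,\E[{\rm I}_{H}^{m-1}],
\]
where I used $\E[\e^{mH_t}]=\e^{t\phi(m)}$ and that $-\phi(m)>0$: indeed $-\phi$ is the Laplace exponent of a (possibly killed) subordinator, hence a nonnegative nondecreasing Bernstein function, and it is not identically zero once we discard the degenerate case $H\equiv 0$ (for which both sides of the claimed formula are $+\infty$). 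Taking $m=1$ already gives $\E[{\rm I}_{H}]=-1/\phi(1)<\infty$, so ${\rm I}_{H}<\infty$ a.s.; an induction on $m$ then shows that all the moments are finite, which legitimises the interchanges above.

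\emph{Step 2 (solving the recursion and moment determinacy).} Iterating from $\E[{\rm I}_{H}^0]=1$ yields
\[
\E[{\rm I}_{H}^m]=\frac{m!}{\prod_{k=1}^{m}\bigl(-\phi(k)\bigr)}=\frac{\Gamma(m+1)}{\prod_{k=1}^{m}\bigl(-\phi(k)\bigr)},\qquad m=1,2,\dots,
\]
which is \eqref{eq:ms}. Finally, since $-\phi$ is nondecreasing, $\prod_{k=1}^{m}(-\phi(k))\ge(-\phi(1))^{m}$, so $\E[{\rm I}_{H}^m]\le m!\,(-\phi(1))^{-m}$; summing $\sum_{m\ge 0}\lambda^{m}\E[{\rm I}_{H}^m]/m!$ shows $\E[\e^{\lambda{\rm I}_{H}}]<\infty$ for every $0\le\lambda<-\phi(1)$. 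A positive random variable having a finite exponential moment in a neighbourhood of $0$ is determined by its moments (for instance via the Carleman criterion, or because its Laplace transform is analytic near $0$), so the law of ${\rm I}_{H}$ is the unique distribution carrying the moments above.

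I do not expect a deep obstacle: the substance is the recursion together with the exponential‑moment bound, both elementary. The points needing mild care are the Fubini/Tonelli bookkeeping in Step 1, handled by nonnegativity and the inductive finiteness of the moments, and the boundary conventions — reading $\e^{H_t}=0$ after the killing time when $H$ is the negative of a killed subordinator (so ${\rm I}_{H}$ is then dominated by an exponentially distributed killing time and $\E[\e^{mH_t}]=\e^{t\phi(m)}$ still holds), and excluding the trivial case $H\equiv 0$. Alternatively, once \eqref{Maulik} is available one may specialise it to $\xi=H$, where $\theta^{*}=+\infty$ because $\Psi=\phi<0$ everywhere, and evaluate at $z=m$ to obtain the same recursion directly.
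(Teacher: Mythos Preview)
Your argument is correct and is precisely the classical Carmona--Petit--Yor derivation that the paper is citing; note that the paper does not supply its own proof of this proposition but simply quotes it from \cite{Carmona-Petit-Yor-97}. The recursion via $A_t=\int_t^\infty \e^{H_s}\,\d s$ and the independence/stationarity split $A_t=\e^{H_t}\widetilde A_t$ is exactly how the original proof proceeds, and your treatment of the killed case (reading $\e^{H_t}=0$ after the killing time so that $\E[\e^{mH_t}]=\e^{t\phi(m)}$ continues to hold, and using the Markov property at the deterministic time $t$ on $\{t<\zeta\}$) is the right way to handle it. The moment-determinacy step via the bound $\E[{\rm I}_H^m]\le m!\,(-\phi(1))^{-m}$ is also the standard one. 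Your closing remark that one may alternatively read the recursion off from \eqref{Maulik} with $\Psi=\phi$ is valid as well and is in fact the viewpoint the paper adopts elsewhere (e.g.\ in the proof of Lemma~\ref{lem:ey} it obtains $\E[{\rm I}_{H^-}^{-1}]=-\phi_-'(0^+)$ directly from \eqref{Maulik}).
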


\begin{proposition}[Bertoin and Yor \cite{Bertoin-Yor-02}]\label{prop:msp}
Let $Y$ be an unkilled  spectrally positive L\'evy process with a negative mean and Laplace exponent $\psi_+$, then the law of $1/{\rm{I}}_{Y}$ is determined by its positive entire moments as follows
\begin{eqnarray} \label{eq:msn}
\E[{\rm{I}}_{Y}^{-m}] &=&\E[-Y_1]\frac{\prod_{k=1}^{m-1}\psi_+(-k)}{\Gamma(m)}, \: m=1,2,\ldots,
\end{eqnarray}
with the convention that the right-hand side is $\E[-Y_1]$ when $m=1$.
\end{proposition}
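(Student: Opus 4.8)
The assertion comprises the moment formula \eqref{eq:msn} together with the claim that the law of $1/{\rm{I}}_{Y}$ is determined by these moments; this is the theorem of Bertoin and Yor that we quote, and I sketch a self-contained argument. Conceptually it is natural: by the Lamperti transformation $Y$ corresponds to a positive self-similar Markov process which, since $Y$ is spectrally positive and drifts to $-\infty$, is absorbed \emph{continuously} at $0$, and whose absorption time from a unit starting point is (up to a deterministic scaling) ${\rm{I}}_{Y}$, so that $1/{\rm{I}}_{Y}$ governs the entrance law from the origin. I will not use this picture directly, but it is what makes the negative moments the right objects to compute. Throughout set $Z=-Y$, a spectrally negative L\'evy process with $\E[Z_{1}]=-\E[Y_{1}]>0$ and $\log\E[e^{sZ_{1}}]=\psi_+(-s)<\infty$ for all $s\geq0$.

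First I would show that all negative entire moments of ${\rm{I}}_{Y}$ are finite. Since ${\rm{I}}_{Y}=\int_{0}^{\infty}e^{Y_{s}}ds\geq\int_{0}^{1}e^{-Z_{s}}ds\geq e^{-\sup_{s\leq1}Z_{s}}$, one has ${\rm{I}}_{Y}^{-1}\leq e^{\sup_{s\leq1}Z_{s}}$; for a spectrally negative $Z$ the Wald martingale $e^{sZ_{t}-t\psi_+(-s)}$ together with optional stopping at the first passage of $Z$ above a level (which is continuous, as $Z$ has no positive jumps) gives $\P(\sup_{s\leq1}Z_{s}>a)\leq e^{-sa+\psi_+(-s)}$ for every $s\geq0$, whence $\E[e^{\lambda\sup_{s\leq1}Z_{s}}]<\infty$ for all $\lambda\geq0$ and so $\E[{\rm{I}}_{Y}^{-m}]<\infty$ for all $m\geq1$. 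Next I would use the recursion. The functional equation \eqref{Maulik}, specialized to $\xi=Y$, reads $\E[{\rm{I}}_{Y}^{z}]=-\frac{z}{\psi_+(z)}\E[{\rm{I}}_{Y}^{z-1}]$ on a vertical strip to the right of $i\R$; because $\psi_+$ is analytic on the half-plane $\{\Re(z)\leq0\}$ --- this is precisely where spectral positivity enters --- while, by the previous step, $z\mapsto\E[{\rm{I}}_{Y}^{z-1}]$ is analytic throughout $\{\Re(z)\leq0\}$ and beyond, both sides of the identity continue analytically across $z=0$ and down the negative real axis, so it persists at every $z=-m$, yielding
\[
\E[{\rm{I}}_{Y}^{-(m+1)}]=\frac{\psi_+(-m)}{m}\,\E[{\rm{I}}_{Y}^{-m}],\qquad m\geq1.
\]
Letting $z\downarrow0$ in the same equation gives the base case $\E[{\rm{I}}_{Y}^{-1}]=-\psi_+'(0^{-})=\E[-Y_{1}]$ (cf.~\eqref{Phi}), and iterating produces exactly \eqref{eq:msn}.

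It remains to establish moment-determinacy of $1/{\rm{I}}_{Y}$. From \eqref{Phi}, $\psi_+(-s)=\delta_+s^{2}+k_+s+s^{2}\int_{0}^{\infty}e^{-sy}\mu_+(y,\infty)dy$, and since $\int_{0}^{1}\mu_+(y,\infty)dy<\infty$ (equivalently $\int_{0}^{1}y\,\mu_+(dy)<\infty$) the integral $\int_{0}^{\infty}e^{-sy}\mu_+(y,\infty)dy$ remains bounded in $s\geq0$; hence $\psi_+(-s)=O(s^{2})$ as $s\to\infty$. Therefore $\prod_{k=1}^{m-1}\psi_+(-k)\leq C^{m}\big((m-1)!\big)^{2}$, so by \eqref{eq:msn} $\E[{\rm{I}}_{Y}^{-m}]\leq C'D^{m}m!$, a growth rate that verifies Carleman's criterion $\sum_{m\geq1}\big(\E[{\rm{I}}_{Y}^{-2m}]\big)^{-1/2m}=\infty$; hence the distribution of $1/{\rm{I}}_{Y}$ is the unique one carrying these moments. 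I expect the delicate point to be the second step, namely propagating the functional equation \eqref{Maulik} to the negative integers. Its substance is exactly the spectral positivity of $Y$: this keeps $\psi_+$ finite and analytic on all of $\{\Re(z)\leq0\}$ and, together with the a priori finiteness of the negative moments, legitimizes the analytic continuation (equivalently, running the underlying Markov-property recursion below $z=0$); for a process with two-sided jumps this mechanism is not available.
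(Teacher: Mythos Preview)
The paper does not supply its own proof of this proposition; it is quoted verbatim from Bertoin and Yor \cite{Bertoin-Yor-02}, so there is nothing in the paper to compare your sketch against. Your self-contained outline is essentially correct and isolates the three natural ingredients --- finiteness of all negative moments via a supremum bound for the spectrally negative process $Z=-Y$, the recursion coming from the Mellin functional equation, and Carleman's criterion for determinacy.

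One point in your second step deserves tightening. You invoke \eqref{Maulik} ``on a vertical strip to the right of $i\R$'' and then continue analytically to the left. But \eqref{Maulik}, as formulated in the paper, presupposes that $\psi_+$ is finite on that strip, i.e.\ that $Y$ possesses some positive exponential moment; a general spectrally positive $Y$ need not (take any $Y$ whose L\'evy measure has merely polynomial tails at $+\infty$), so the strip you claim to start from may be empty and the analytic-continuation argument has no anchor. The remedy is immediate: the functional equation of Carmona--Petit--Yor / Maulik--Zwart actually holds at every real $z$ for which $\psi_+(z)$ is defined and both moments $\E[{\rm{I}}_Y^{z}]$, $\E[{\rm{I}}_Y^{z-1}]$ are finite. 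Spectral positivity makes $\psi_+$ analytic on $\{\Re(z)\le 0\}$ (this is indeed where it enters), and you have already secured all negative moments, so you may apply the recursion directly at each $z=-m$, $m\ge1$, without passing through positive~$z$. A smaller remark on your third step: you appeal to the representation \eqref{Phi}, which is specific to the $Y$ constructed in Theorem~\ref{MainTheorem}. For an arbitrary spectrally positive $Y$ with finite mean the estimate $\psi_+(-s)=O(s^{2})$ still holds --- split the L\'evy--Khintchine integral at $1$ and use $|e^{-sy}-1+sy|\le (sy)^2/2$ on $(0,1)$ --- so your Carleman bound goes through unchanged.
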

In order to get \eqref{MainAssertion} in the case when $\xi$ does not have some finite positive exponential moments, we will develop  some approximation techniques. However, the exponential functional is not continuous in the Skorohod topology and therefore we have to find some criteria in order to secure  the weak convergence  of sequences of exponential functionals. This is the aim of the next result.

\begin{lemma}\label{Lemma111}

Let $(\xi^{(n)})_{n\geq1}$ be a sequence of \LLPs with negative means such that
\[ \lim_{n\to\infty}\xi^{(n)}\stackrel{d}= \xi\]
where $\xi$ is a \LLP with  $\E[\xi_{1}]<0$.  Let us assume further that at least one of the following conditions holds:
\begin{enumerate}[(a)]
\item for each $n\geq 1$, $\xi^{(n)}$ and $\xi$ are unkilled spectrally positive L\'evy processes such that $\lim_{n\to \infty}\E[\xi_1^{(n)}] =\E[\xi_1],$ 
\item for each $n\geq 1$, $\xi^{(n)}$ and $\xi$ are the negative of unkilled subordinators,
\item  the sequence $(m_{\xi^{(n)}})_{n\geq 1}$ is tight, where $m_{\xi^{(n)}}$ is the law of ${\rm{I}}_{\xi^{(n)}}.$
\end{enumerate}
Then, in all cases, we have
\begin{equation}\label{L1-111}
\lim_{n\to\infty}{\rm{I}}_{\xi^{(n)}}\stackrel{d}={\rm{I}}_{\xi}.
\end{equation}
\end{lemma}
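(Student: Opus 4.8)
The plan is to prove \eqref{L1-111} by treating the three cases through a common strategy: reduce everything to the weak convergence of the stationary measures of the associated GOU processes, which is precisely \refthm{Lemma1}, and then verify the tightness hypothesis of that theorem in each of the settings (a), (b), (c). Indeed, recall from the discussion in Section \ref{O-U} that ${\rm{I}}_{\xi^{(n)}}$ has the law of the unique stationary measure $m^{(n)}$ of $U^{\xi^{(n)}}$, and likewise ${\rm{I}}_{\xi}$ corresponds to the stationary measure $m^{(0)}$ of $U^{\xi}$. Since $\xi^{(n)}\stackrel{d}{\to}\xi$ and all processes drift to $-\infty$ (having negative means), \refthm{Lemma1} yields $m^{(n)}\stackrel{w}{\to}m^{(0)}$, hence \eqref{L1-111}, \emph{provided} the sequence $(m^{(n)})_{n\geq1}=(m_{\xi^{(n)}})_{n\geq 1}$ is tight. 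So in case (c) there is nothing left to do, and the whole content of the lemma is to establish tightness in cases (a) and (b).

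For the tightness in case (b), where each $\xi^{(n)}=-\zeta^{(n)}$ with $\zeta^{(n)}$ an unkilled subordinator and $\zeta^{(n)}\stackrel{d}{\to}\zeta$, I would use the moment formula of \refprop{prop:ms}: writing $\phi^{(n)}$ for the Laplace exponent of $\xi^{(n)}$, we have $\E[{\rm{I}}_{\xi^{(n)}}]=1/(-\phi^{(n)}(1))$. Weak convergence of the subordinators forces $\phi^{(n)}(1)\to\phi(1)$ (convergence of Laplace exponents at the fixed point $1$, which follows from convergence of one-dimensional distributions since $\E[e^{-\zeta^{(n)}_1}]\to\E[e^{-\zeta_1}]$), and $-\phi(1)>0$ because $\zeta$ is a non-degenerate subordinator; hence $\sup_n\E[{\rm{I}}_{\xi^{(n)}}]<\infty$, and tightness follows from Markov's inequality. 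For case (a), where each $\xi^{(n)}$ is an unkilled spectrally positive \LLP with negative mean and $\E[\xi_1^{(n)}]\to\E[\xi_1]$, I would instead work with the negative first moment of ${\rm{I}}_{\xi^{(n)}}$, using \refprop{prop:msp}: $\E[{\rm{I}}_{\xi^{(n)}}^{-1}]=\E[-\xi_1^{(n)}]=-\E[\xi_1^{(n)}]\to -\E[\xi_1]<\infty$. Controlling a negative moment does not by itself give tightness of the $m^{(n)}$'s on $\R_+$ (it controls mass near $0$, not near $\infty$), so I would also need an upper bound near infinity. For this I would combine the negative-moment bound with an estimate on a small positive moment $\E[{\rm{I}}_{\xi^{(n)}}^{\rho}]$ for some fixed $\rho\in(0,1)$: by \eqref{Maulik} (valid here on a suitable strip since spectrally positive \LLPs with negative mean have the relevant moments), $\mathcal{M}_{m_{\xi^{(n)}}}(\rho+1)=-\frac{\rho}{\Psi^{(n)}(\rho)}\mathcal{M}_{m_{\xi^{(n)}}}(\rho)$, and one checks that $\Psi^{(n)}(\rho)$ converges (to $\Psi(\rho)$) and stays bounded away from $0$, while $\mathcal{M}_{m_{\xi^{(n)}}}(\rho)\leq 1+\E[{\rm{I}}_{\xi^{(n)}}^{-1}]$ is bounded by the preceding step; iterating or bootstrapping to reach a moment of order strictly larger than some fixed positive number then gives $\sup_n\E[{\rm{I}}_{\xi^{(n)}}^{\rho}]<\infty$ for a fixed $\rho>0$, which is exactly what is needed for tightness via Markov.

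The main obstacle I anticipate is precisely this tightness verification in case (a): one must carefully choose the strip on which \eqref{Maulik} holds uniformly in $n$, control the denominator $\Psi^{(n)}(\rho)$ (showing it does not approach a zero of the limiting $\Psi$, which is where the hypothesis $\E[\xi_1^{(n)}]\to\E[\xi_1]<0$ enters, forcing the root $\theta$ of $\Psi$ to stay uniformly away from the chosen $\rho$), and make sure the convergence $\Psi^{(n)}\to\Psi$ is locally uniform on that strip — this last point is a standard consequence of convergence of \LLPs together with convergence of means, but needs the representation \eqref{psi} and an equicontinuity argument. Once uniform boundedness of some fixed positive moment is in hand, the rest is routine: Markov's inequality gives tightness of $(m^{(n)})$, \refthm{Lemma1} upgrades this to weak convergence to $m^{(0)}$, and since $m^{(0)}$ is the law of ${\rm{I}}_{\xi}$, assertion \eqref{L1-111} follows in all three cases.
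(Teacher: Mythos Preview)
Your reduction of all three cases to \refthm{Lemma1} is natural, and for case (c) it is exactly what the paper does. For case (b) your tightness argument via $\sup_n\E[{\rm I}_{\xi^{(n)}}]<\infty$ is valid, though more circuitous than necessary.

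The genuine gap is in case (a). You assert that the functional equation \eqref{Maulik} is ``valid here on a suitable strip since spectrally positive \LLPs with negative mean have the relevant moments'', but this is false in general: a spectrally positive \LL process has its Laplace exponent $\psi_+(-s)=\Psi(-s)$ well defined only for $s\geq 0$, and there is no reason for $\Psi(z)$ to extend to any strip with $\Re(z)>0$ unless the tail of $\Pi$ decays exponentially. So the strip on which \eqref{Maulik} can be used to bootstrap a positive moment may simply be empty, and your control of $\E[{\rm I}_{\xi^{(n)}}^{\rho}]$ breaks down. Since tightness at infinity is exactly what you need, this is not a technicality you can patch by choosing $\rho$ small.

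The paper's route is both simpler and avoids this obstacle: in cases (a) and (b) it bypasses \refthm{Lemma1} entirely and uses the method of moments. In case (a), \refprop{prop:msp} gives closed formulas for \emph{all} negative integer moments $\E[{\rm I}_{\xi^{(n)}}^{-m}]$ in terms of $\E[\xi^{(n)}_1]$ and the values $\psi^{(n)}_+(-k)$, $k\geq 1$; by the \LL continuity theorem and the assumption $\E[\xi^{(n)}_1]\to\E[\xi_1]$, each of these converges to the corresponding moment of ${\rm I}_\xi$. Since the law of ${\rm I}_\xi^{-1}$ is determined by its positive integer moments (\refprop{prop:msp}), weak convergence of ${\rm I}_{\xi^{(n)}}^{-1}$ (hence of ${\rm I}_{\xi^{(n)}}$) follows from the classical moment-convergence criterion. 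Case (b) is handled the same way using \refprop{prop:ms} and the positive integer moments. This argument uses only values of the Laplace exponents on the negative half-line, where they always exist, and so works without any exponential-moment hypothesis.
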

\begin{proof}
 To prove \eqref{L1-111} in the case (a), we simply observe that writing $\psi^{(n)}_+$ for the Laplace exponent of $\xi_1^{(n)}$, we have,  by L\'evy continuity Theorem, see e.g. \cite[Theorem XIII.1.2]{Feller-71}, that  for all $s\geq0$, $\psi^{(n)}_+(-s)\rightarrow \psi_+(-s)$ as  $n \rightarrow \infty$.    Next, putting $M^{(n)}_m$ for the sequence of negative entire moments of ${\rm{I}}_{\xi^{(n)}}$, we easily deduce, from \eqref{eq:msn} for all $m=1,2\ldots,$ that  $\lim_{n\to \infty}M^{(n)}_m = M_m$ where $M_m$ is the sequence of negative entire moments of ${\rm{I}}_{\xi}$. These random variables being moment determinate, see Proposition \ref{prop:msp}, we conclude (a) by invoking  \cite[Examples (b) p.269]{Feller-71}. The second case follows by applying a similar line of reasoning to the expression \eqref{eq:ms}. Finally,  the case (c) is a straightforward  consequence of \eqref {L1-1} of Theorem \ref{Lemma1}.
\end{proof}
Before stating our next result, we need to introduce the following notation.  Let us first recall that the reflected processes
 $\left(R^+_t=\sup_{0\leq s\leq t}\xi_s-\xi_t\right)_{t\geq 0}$ and $\left(R^-_t=\xi_t-\inf_{0\leq s\leq t}\xi_s\right)_{t\geq 0}$ are Feller processes in $[0,\infty)$  which possess local times $L^{\pm}=(L^{\pm}_t)_{t\geq0}$ at the level $0$.  The ascending and descending ladder times, $l^{\pm}=(l^{\pm}(t))_{t\geq0}$, are defined as the right-continuous inverses of $L^{\pm}$, i.e. for any $t\geq0$, $l^{\pm}(t)=\inf\{s> 0;\:  L^{\pm}_s>t\}$
 and the ladder height processes
 $H^+=(H^+(t))_{t\geq0}$ and $-H^-=(-H^-(t))_{t\geq0}$ by
 $$H^+(t)=\xi_{l^{+}(t)}=\sup_{0\leq s\leq l^{+}(t)}\xi_s\,, \qquad \hbox{ whenever } l^{+}(t)<\infty\,,$$
  $$-H^-(t)=\xi_{l^{-}(t)}=\inf_{0\leq s\leq l^{-}(t)}\xi_s\,, \qquad \hbox{ whenever } l^{-}(t)<\infty\,.$$
Here, we use the convention $\inf\{\varnothing\} =\infty$ and $H^{+}(t)=\infty$ when $L^{+}_{\infty}\leq t$ and $-H^{-}(t)=-\infty$ when $L^{-}_{\infty}\leq t$.
From \cite[p. 27]{Doney}, we have, for $\alpha,\beta\geq 0$,
 \begin{equation}\label{BivLadder}
\log{ \E\left[e^{-\alpha l^{+}(1)-\beta H^{+}(1)}\right]} = -k(\alpha,\beta)=-k_{+}-\eta_{+}\alpha-\delta_{+}\beta-\int_{0}^{\infty}\int_{0}^{\infty}\Big(1-e^{-(\alpha y_{1}+\beta y_{2})}\Big)\mu_{+}(dy_{1},dy_{2}),
 \end{equation}
 where $\eta_{+}$ is the drift of the subordinator $l^{+}$ and $\mu_{+}(dy_{1},dy_{2})$ is the \LL measure of the bivariate subordinator $(l^{+},H^{+})$. Similarly, for $\alpha,\beta\geq 0$,
 \begin{equation}\label{BivLadder1}
\log  \E\left[e^{-\left(\alpha l^{-}(1)-\beta H^{-}(1)\right)}\right]=-k_{*}(\alpha,\beta)=-\eta_{-}\alpha-\delta_{-}\beta-\int_{0}^{\infty}\int_{0}^{\infty}\Big(1-e^{-(\alpha y_{1}+\beta y_{2})}\Big)\mu_{-}(dy_{1},dy_{2}),
 \end{equation}
 where $\eta_{-}$ is the drift of the subordinator $l^{-}$ and $\mu_{-}(dy_1,dy_2)$ is the \LL measure of the bivariate subordinator $(l^{-},-H^{-})$.

\begin{lemma}\label{Lemma3}
Let $\xi$ be a \LLP with triplet $(a,\sigma,\Pi)$ and Laplace exponent $\psi$. Let,  for any $n\geq 1$,  $\xi^{(n)}$ be the  \LLP  with Laplace exponent denoted by  $\psi^{(n)}$ and triplet $(a,\sigma,\Pi^{(n)})$ such that $\Pi^{(n)}=\Pi$ on $\R_{-}$ and on $\R_{+}$
\[ \Pi^{(n)}(dy)= h^{(n)}(y)\Pi(dy),\]
 where for all $y>0$, $0\leq h^{(n)}(y) \uparrow 1$ as $n\rightarrow \infty$ and uniformly for $n\geq 1$ we have that for some $C\geq0$,  $\limsup_{y\to 0}y^{-1}(1-h_{n}(y))\leq C$. Then,
 \begin{equation}\label{eqn:Referee1}
 \lim_{n\to\infty} \xi^{(n)}\stackrel{d}=\xi,
 \end{equation}
and for all $\alpha\geq 0,\,\beta\geq0$, we have, as $n\rightarrow \infty $,
\begin{align}\label{LadderHeight}
k^{(n)}(\alpha,\beta)  \rightarrow k (\alpha,\beta) ,\\
k_*^{(n)}(\alpha,\beta)  \rightarrow k_* (\alpha,\beta), \nonumber
\end{align}
where  $k^{(n)}(\alpha,\beta)$ and $k_{*}^{(n)}(\alpha,\beta)$ stand for the bivariate Laplace exponents of the ladder processes of $\xi^{(n)}$, normalized such that  $k^{(n)}(1,0)=k_{*}^{(n)}(1,0)=1$. Also $k (\alpha,\beta)$ and $k_* (\alpha,\beta)$ stand for the  bivariate Laplace exponents of the ladder processes of $\xi$, normalized such that  $k(1,0)=k_{*}(1,0)=1$.
\end{lemma}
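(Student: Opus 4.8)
The plan is to treat the two assertions of the lemma separately: the weak convergence \eqref{eqn:Referee1} is elementary, whereas the convergence \eqref{LadderHeight} of the bivariate ladder exponents is the real content.

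For \eqref{eqn:Referee1}, the key remark is that the hypothesis $\limsup_{y\to 0}y^{-1}(1-h_n(y))\le C$, uniform in $n$, makes $\int_{(0,\infty)}(1\wedge y)(1-h_n(y))\Pi_+(dy)<\infty$ uniformly in $n$. Hence one may realise $\xi^{(n)}\stackrel{d}{=}\xi-S^{(n)}+b_n\,\mathrm{id}$, where $S^{(n)}$ is a driftless subordinator independent of $\xi$ with L\'evy measure $(1-h_n(y))\Pi_+(dy)$ and $b_n=\int_{(0,1)}y(1-h_n(y))\Pi_+(dy)$ compensates the small jumps removed in \eqref{Levy-K}. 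By dominated convergence $b_n\to 0$ and $\E[S^{(n)}_1]\to 0$, and with the natural coupling in which $(1-h_n)\Pi_+$ decreases in $n$ the subordinator $S^{(n)}$ is monotone, so $\sup_{s\le T}S^{(n)}_s=S^{(n)}_T\to 0$ a.s.\ for every $T$; thus $\xi^{(n)}\to\xi$ a.s.\ uniformly on compacts, which gives \eqref{eqn:Referee1}. (Equivalently, since $|1-h_n|\le 1$, dominated convergence in \eqref{Levy-K} gives $\Psi^{(n)}(i\lambda)\to\Psi(i\lambda)$ for every $\lambda$, and one applies the L\'evy continuity theorem.) One also reads off $\E[\xi^{(n)}_1]=\E[\xi_1]-\int_{[1,\infty)}y(1-h_n(y))\Pi_+(dy)\le\E[\xi_1]<0$, so every $\xi^{(n)}$ again drifts to $-\infty$ with a finite mean.

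For \eqref{LadderHeight} I would use the Fristedt exponential formula for the Wiener--Hopf factors (see \cite[Ch.~VI]{Bertoin-96}): with the normalisation $k(1,0)=k_*(1,0)=1$ — which is exactly the one that makes the multiplicative constant equal to $1$, since the $t$-integrand below vanishes at $(\alpha,\beta)=(1,0)$ —
\[
k(\alpha,\beta)=\exp\Bigl(\int_0^\infty\frac{dt}{t}\int_{[0,\infty)}\bigl(e^{-t}-e^{-\alpha t-\beta x}\bigr)\,\P(\xi_t\in dx)\Bigr),
\]
and $k_*(\alpha,\beta)$ given by the same expression with $\P(\xi_t\in dx)$ integrated over $(-\infty,0]$ and $-\beta x$ replaced by $+\beta x$; likewise for each $\xi^{(n)}$. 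By the first part $\xi^{(n)}_t\to\xi_t$ in law for every fixed $t>0$, and since $\P(\xi_t=0)=0$ in all cases of interest, the inner integrals converge pointwise in $t$ to their analogues for $\xi$. It then remains to majorise $t\mapsto\bigl|\tfrac1t\int(e^{-t}-e^{-\alpha t\mp\beta x})\P(\xi^{(n)}_t\in dx)\bigr|$ (signs matched throughout: upper for $k$, lower for $k_*$) by a fixed $\tfrac{dt}{t}$-integrable function, valid for all large $n$, and to invoke dominated convergence.

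This majorisation is the main obstacle, and it is here that the two hypotheses are needed. Near $t=0$ one splits $e^{-t}-e^{-\alpha t\mp\beta x}=(e^{-t}-e^{-\alpha t})+e^{-\alpha t}(1-e^{\mp\beta x})$, so that the inner quantity is bounded, after integration in $x$, by $|1-\alpha|+\beta\,t^{-1}\E[(\xi^{(n)}_t)^{\pm}]$; using $(\xi^{(n)}_t)^{+}\le(\xi_t)^{+}+b_n t$, $(\xi^{(n)}_t)^{-}\le(\xi_t)^{-}+S^{(n)}_t$ and $\sup_n(b_n+\E[S^{(n)}_1])<\infty$, this is $\le |1-\alpha|+\beta t^{-1}\E[(\xi_t)^{\pm}]+C$, which is $\tfrac{dt}{t}$-integrable near $0$ because Fristedt's formula already converges for $\xi$ itself. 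For $t>1$ with $\alpha>0$ the inner integral is crudely $O(e^{-t}+e^{-\alpha t})$, hence harmless. The only delicate case is $\alpha=0$ — and it is exactly the one we need, since $\phi_{\pm}$ in \eqref{L-KLadder}--\eqref{L-KLadder1} correspond to $k(0,\cdot)$ and $k_*(0,\cdot)$: here the tail in $t$ is controlled by the classical criterion that a L\'evy process $\zeta$ drifts to $-\infty$ iff $\int_1^\infty t^{-1}\P(\zeta_t\ge 0)\,dt<\infty$, applied to $\zeta=\xi+\varepsilon\,\mathrm{id}$ (with $\varepsilon$ small enough that $\E[\xi_1]+\varepsilon<0$ and $n$ large enough that $b_n<\varepsilon$, so that $\xi^{(n)}_t\le\xi_t+\varepsilon t$), supplemented for the descending factor by a Fubini step writing $\E[e^{\beta\xi^{(n)}_t}\mathbb{I}_{\{\xi^{(n)}_t\le 0\}}]$ as $\beta\int_{-\infty}^0 e^{\beta v}\P(\xi^{(n)}_t< v)\,dv$ together with the bound $\int_1^\infty t^{-1}\P(\zeta_t>v)\,dt\le C(1+\log^{+}|v|)$. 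Dominated convergence then yields \eqref{LadderHeight}. Finally, if $\P(\xi_t=0)>0$ for some $t$ — a degenerate compound Poisson situation with no drift and no Gaussian part, which does not arise in any of our applications — one first adds an independent vanishing Brownian component and lets its variance tend to $0$, or else one derives the descending convergence at $\alpha=0$ from the ascending one via Vigon's \'equation amicale (see \cite{Vigon}), the negative jump part of $\Pi$ being common to $\xi$ and all $\xi^{(n)}$.
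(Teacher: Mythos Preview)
Your coupling $\xi^{(n)}\stackrel d=\xi-S^{(n)}+b_n\,\mathrm{id}$ and the derivation of \eqref{eqn:Referee1} are essentially the paper's identity $\xi\stackrel d=\xi^{(n)}+\tilde\xi^{(n)}$ written the other way round, so that part is fine.

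The difference lies in how the boundary cases $\alpha=0$ or $\beta=0$ of \eqref{LadderHeight} are reached. You push dominated convergence through the Fristedt integral for \emph{all} $\alpha,\beta\ge 0$ at once; the paper instead uses Fristedt only for $\alpha,\beta>0$, then writes $k^{(n)}(\alpha,\beta)=k^{(n)}(0,0)+\tilde k^{(n)}(\alpha,\beta)$, shows $k^{(n)}(0,0)\downarrow k(0,0)$ by monotone convergence (from $\P(\xi^{(n)}_t\ge 0)\le\P(\xi_t\ge 0)$, the same monotonicity you exploit), and invokes the L\'evy continuity theorem for the \emph{unkilled} bivariate subordinators $\tilde k^{(n)}$ to extend the convergence to the closed quadrant. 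The descending exponent is then obtained for free from the Wiener--Hopf identity $\psi^{(n)}(i\beta)-\alpha=-k^{(n)}(\alpha,-i\beta)\,k_*^{(n)}(\alpha,i\beta)$, without touching the Fristedt integral for $k_*$ at all. This buys a much shorter argument and sidesteps precisely the estimates you find most delicate.

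Your direct route has two soft spots. First, near $t=0$ you majorise by $\beta\,t^{-1}\E[(\xi_t)^{+}]$ and appeal to ``Fristedt already converges for $\xi$''; but Fristedt gives the finiteness of $\int_0^1 t^{-1}\E\big[(1-e^{-\beta\xi_t})\mathbb I_{\{\xi_t\ge 0\}}\big]\,dt$, not of $\int_0^1 t^{-1}\E[\xi_t^{+}]\,dt$, and the latter can fail without a first-moment assumption (which the lemma does not impose). The clean fix is to keep the bound $1-e^{-\beta x}$ and dominate $\xi_t^{(n),+}$ by $(\xi_t+Bt)^{+}$ with $B=\sup_n b_n$, then quote Fristedt for the fixed L\'evy process $\xi+B\,\mathrm{id}$. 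Second, for $k_*$ at $\alpha=0$ you rely on the estimate $\int_1^{\infty}t^{-1}\P(\zeta_t>v)\,dt\le C(1+\log^{+}|v|)$, which is plausible under a finite-mean hypothesis but is neither standard nor proved here; the paper's Wiener--Hopf shortcut avoids this entirely. Your compound-Poisson workaround (add a vanishing Brownian part, or go through Vigon's \'equation amicale) is also more circuitous than the paper's direct computation showing $\P(\xi^{(n)}_t\in dy)\mathbb I_{\{y\ge 0\}}\stackrel v\to\P(\xi_t\in dy)\mathbb I_{\{y\ge 0\}}$.
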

\begin{remark}
Denote by $\left( l^{+}_{(n)},H^+_{(n)}\right)$ $\left(\text{resp.~}\left( l^{-}_{(n)},-H^{-}_{(n)}\right)\right)$ the bivariate ascending (resp.~descending) ladder processes  of  $\xi^{(n)}$ and $\left( l^{+},H^+\right)$ $\left(\text{resp.~}\left( l^{-},-H^{-}\right)\right)$ the bivariate ascending (resp.~descending) ladder processes  of  $\xi$, then from the L\'evy continuity Theorem we deduce that as $n\rightarrow \infty $,
\begin{align}\label{LadderHeight}
\nonumber &\left( l^{+}_{(n)},H^+_{(n)}\right)\stackrel{d}{\rightarrow }\left( l^{+},H^{+}\right),\\
 &\left( l^{-}_{(n)},-H^-_{(n)}\right)\stackrel{d}{\rightarrow }\left( l^{-},-H^{-}\right),
\end{align}
where in the convergence the sequence of killing rates of the ladder height processes also converge to the killing rate of the limiting process.
\end{remark}

\begin{proof}

For the sake of completeness and also to include both the compound Poisson case and  the cases when $\alpha=0$ and/or $\beta=0$, we must improve  the proof of Lemma 3.4.2 in \cite{Vigon}. Next, since $\Pi^{(n)}(dx) \stackrel{v}{\rightarrow} \Pi(dx)$,  where $\stackrel{v}{\rightarrow}$ stands for the vague convergence, we get \eqref{eqn:Referee1} from e.g. \cite[Theorem 13.14(i)]{Kallenberg}.  We note the identity
\begin{equation}\label{eq:id-t}
\xi\stackrel{d}{=} \xi^{(n)}+\tilde{\xi}^{(n)},
\end{equation} where $\tilde{\xi}^{(n)}$ is a subordinator with L\'evy measure $\tilde{\Pi}^{(n)}(dy)=(1-h_{n}(y)) \Pi(dy)$ and no drift, since $1-h_{n}(y)=O(y)$ at zero.  Then, when $\xi$ is  a compound Poisson process  we have that $\tilde{\xi}^{(n)}$ is a compound Poisson process and, for all $t>0$,
\[ \P\left(\xi^{(n)}_t=0\right)=\P\left(\xi_t=0, t<\tilde{T}^{(n)}\right)+\P\left(\xi^{(n)}_t=0, t\geq \tilde{T}^{(n)}\right),\]
where $\tilde{T}^{(n)}=\inf\{ s>0; \: \tilde{\xi}^{(n)}_s >0\}$.  Since for all $y>0$, $h^{(n)}(y) \uparrow 1$, then $\P( t> \tilde{T}^{(n)}) \rightarrow 0$ as $n\rightarrow \infty$ and
\[ \P\left(\xi^{(n)}_t \in dy\right)\mathbb{I}_{\{y\geq0\}} \stackrel{v}{\rightarrow} \P\left(\xi_t \in dy\right)\mathbb{I}_{\{y\geq0\}}.\]
 When $\xi$ is not a compound Poisson process,  the law of $\xi^{(n)}$ does not charge $\{0\}$ and thus as $n\rightarrow \infty$
\[ \P\left(\xi^{(n)}_t \in dy\right)\mathbb{I}_{\{y>0\}} \stackrel{v}{\rightarrow} \P\left(\xi_t \in dy\right)\mathbb{I}_{\{y>0\}}.\]
 Henceforth, from  the expression
\begin{equation}\label{eq:def-biv} k^{(n)}(\alpha,\beta) = \exp\left(\int_0^{\infty}dt\int_0^{\infty}\left(e^{-t} -e^{-\alpha t -\beta y} \right)t^{-1}\P(\xi^{(n)}_t \in dy)\right)
\end{equation}
which holds for any $\alpha>0$ and $\beta>0$, see e.g.~\cite[Corollary VI.2.10]{Bertoin-96},  we deduce easily that for both cases
\begin{equation}  \label{eq:cv-be}
\lim_{n\to \infty} k^{(n)}(\alpha,\beta) = k(\alpha,\beta).
\end{equation} Moreover, we can write
\begin{equation}\label{eq:def-biva}
 k^{(n)}(\alpha,\beta) =k^{(n)}(0,0)+\tilde{k}^{(n)}(\alpha,\beta),
 \end{equation}
 where $\tilde{k}^{(n)}$ are the Laplace exponents of unkilled  bivariate  subordinators,  see \cite[p.~27]{Doney}.  Note from \eqref{eq:def-biv} that
\begin{equation*}k^{(n)}(0,0) =  \exp\left(-\int_0^{\infty}\left(1-e^{-t}\right)\P\left(\xi^{(n)}_t \geq 0\right)\frac{dt}{t}\right).
\end{equation*}
Next from \eqref{eq:id-t} and the fact that $\tilde{\xi}^{(n)}$ is a subordinator, we have that $\P\left(\xi^{(n)}_t \geq 0\right) \leq \P\left(\xi_t \geq 0\right) $ and appealing to the monotone convergence theorem we get that $k^{(n)}(0,0) \downarrow k(0,0)$. Hence we deduce from \eqref{eq:cv-be} and \eqref{eq:def-biva} that for any $\alpha,\beta >0$, $\tilde{k}^{(n)}(\alpha,\beta) \rightarrow \tilde{k}(\alpha,\beta)$ where $ \tilde{k}(\alpha,\beta)=k(\alpha,\beta) -k(0,0)$. From the L\'evy continuity theorem, we have, writing $\left(\tilde{l}_{(n)}^{+},\tilde{H}_{(n)}^{+}, \right)$ for the unkilled versions of the ascending bivariate ladder processes, that $\left(\tilde{ l}^{+}_{(n)},\tilde{H}^+_{(n)}\right)\stackrel{d}{\rightarrow }\left(\tilde{l}^+,\tilde{H}^{+}\right)$, where $\left(\tilde{l}^+,\tilde{H}^{+}\right)$ stands also for the  unkilled version of  $\left(\tilde{l}^+,\tilde{H}^{+}\right)$. These probability distributions being proper, we have that for all $\alpha,\beta \in \R$, $\tilde{k}^{(n)}(i\alpha,i\beta) \rightarrow \tilde{k}(i\alpha,i\beta)$, see \cite[Theorem XV.3.2]{Feller-71}. Hence $k^{(n)}(0,i\beta) \rightarrow k(0,i\beta)$ for all $\beta \in \R$ which completes the proof for the ascending ladder height processes. The proof of the convergence of the Laplace exponent of the bivariate descending ladder process follows readily from the identities
\begin{eqnarray*}
\psi^{(n)}(i\beta)-\alpha&=&-k^{(n)}(\alpha,-i\beta)k^{(n)}_*(\alpha,i\beta) \\
\psi(i\beta)-\alpha&=&-k(\alpha,-i\beta)k_*(\alpha,i\beta)
\end{eqnarray*}
and the convergence of $\psi^{(n)}$ to $\psi$ and $k^{(n)}$ to  $k$.
\end{proof}

\subsubsection{The case ${\bf P}+$}
We first consider the case when  $\xi$ satisfies both the conditions ${\bf P}+$  and $\E[\xi_1]>-\infty$.  We start by showing that   the condition ${\bf P}+$ implies that $\mu_+ \in \mathcal{P}$. To this end, we shall need the so-called \emph{equation amicale invers\'ee} derived by Vigon, for all $x>0$,
\begin{equation} \label{eq:ami-inv}
\bar{\mu}_+(x)=\int_{0}^{\infty}\PPp(x+y)\mathcal{U}_-(dy),
\end{equation}
where $\mathcal{U}_-$ is the renewal measure corresponding to the subordinator $H^-$, see e.g. \cite[Theorem 5.16]{Doney}.
\begin{lemma}\label{Vigon}
Let us assume that $\PPp(x)$ has a non-positive derivative $\pi_+(x)$ defined for all $x>0$ and such that $-\pi_+(x)$ is non-increasing. Then $\bar\mu_+(x)$ is differentiable with derivative $u(x)$ such that $-u(x)$ is non-increasing.
\end{lemma}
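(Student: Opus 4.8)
The plan is to differentiate the \emph{equation amicale invers\'ee} \eqref{eq:ami-inv} under the integral sign and then exploit the monotonicity hypothesis on $\pi_+$. Writing $\mathcal{U}_-$ for the renewal measure of $H^-$, we have, for all $x>0$,
\[
\bar\mu_+(x)=\int_{[0,\infty)}\PPp(x+y)\,\mathcal{U}_-(dy).
\]
Since $\PPp$ is by assumption absolutely continuous on $(0,\infty)$ with density $\pi_+$, and $\PPp$ is non-increasing and integrable near $0$ against $\mathcal{U}_-(dy)$ (which is exactly the content of the integrability condition $\int_0^\infty(1\wedge y)\mu_+(dy)<\infty$ guaranteeing that $\bar\mu_+(x)<\infty$ for $x>0$), the first step is to justify that one may differentiate under the integral to obtain
\[
u(x):=\bar\mu_+'(x)=\int_{[0,\infty)}\pi_+(x+y)\,\mathcal{U}_-(dy),\qquad x>0.
\]
For this I would fix $0<a<b$ and note that on $x\in[a,b]$ one has $|\pi_+(x+y)|=-\pi_+(x+y)\le -\pi_+(a+y)$, and $\int_{[0,\infty)}(-\pi_+(a+y))\,\mathcal{U}_-(dy)=\bar\mu_+(a)-\lim_{x\to\infty}\bar\mu_+(x)=\bar\mu_+(a)<\infty$ by the fundamental theorem of calculus applied to the finite, non-increasing function $\PPp(a+\cdot)$ together with monotone convergence; this integrable dominating function legitimizes the differentiation on $[a,b]$, and $a<b$ were arbitrary.

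The second step is purely monotonicity. For $0<x_1\le x_2$ and every $y\ge 0$ we have $x_1+y\le x_2+y$, hence, since $-\pi_+$ is non-increasing on $(0,\infty)$,
\[
-\pi_+(x_1+y)\ \ge\ -\pi_+(x_2+y)\ \ge\ 0 .
\]
Integrating this inequality against the positive measure $\mathcal{U}_-(dy)$ gives $-u(x_1)\ge -u(x_2)$, i.e.\ $-u$ is non-increasing on $\R^+$. Since $\pi_+\le 0$ we also get $u\le 0$, so $\bar\mu_+$ is non-increasing with a non-increasing negative derivative, which is the assertion. (In the language of Definition~\ref{Definition} this says $\mu_+\in\mathcal P$, the density of $\mu_+$ being $-u$.)

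The only delicate point is the interchange of differentiation and integration against the renewal measure $\mathcal{U}_-$, because $\mathcal{U}_-$ may have an atom at $0$ and is in general only $\sigma$-finite with possibly heavy mass near $0$; the key observation making it work is that $\PPp(a+\cdot)$ is a genuinely \emph{finite} non-increasing function for every fixed $a>0$, so its total variation over $(0,\infty)$ is $\PPp(a)$, and the measure $-\pi_+(a+y)\,dy$ it induces has finite $\mathcal{U}_-$-integral equal to $\bar\mu_+(a)<\infty$ by \eqref{eq:ami-inv}. Once this dominating bound is in hand the rest is routine. I would also remark that the argument does not use the full strength of ${\bf P+}$ beyond what is stated, and that the same computation shows more generally that convexity/monotonicity properties of $\PPp$ transfer to $\bar\mu_+$ through \eqref{eq:ami-inv}, which is what will be needed to verify hypothesis $(\mathcal H_1)$ in the situation ${\bf P+}$.
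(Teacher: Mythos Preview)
Your overall strategy---differentiate \eqref{eq:ami-inv} under the integral sign and then read off the monotonicity of $-u$ from that of $-\pi_+$---is exactly the paper's approach. The second step (monotonicity) is fine. The gap is in your justification of the dominating bound.

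You claim
\[
\int_{[0,\infty)}(-\pi_+(a+y))\,\mathcal{U}_-(dy)=\bar\mu_+(a)-\lim_{x\to\infty}\bar\mu_+(x)=\bar\mu_+(a),
\]
but this equality is false in general: the left-hand side integrates the \emph{density} $-\pi_+(a+\cdot)$ against $\mathcal{U}_-$, while $\bar\mu_+(a)$ integrates the \emph{tail} $\PPp(a+\cdot)$ against $\mathcal{U}_-$. These coincide only when $-\pi_+=\PPp$, i.e.\ in the exponential case. (Try $\PPp(x)=e^{-2x}$ and $\mathcal{U}_-(dy)=dy$: the left side is $e^{-2a}$, the right side is $\tfrac12 e^{-2a}$.) Your later remark that ``the measure $-\pi_+(a+y)\,dy$ has finite $\mathcal{U}_-$-integral equal to $\bar\mu_+(a)$'' conflates the Lebesgue integral $\int_0^\infty(-\pi_+(a+y))\,dy=\PPp(a)$ with the $\mathcal{U}_-$-integral you actually need; these are different objects.

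The paper closes this gap by a direct estimate exploiting the subadditivity of the renewal function: writing the integral as a sum over unit intervals and using $\mathcal{U}_-(n+1)-\mathcal{U}_-(n)\le \mathcal{U}_-(1)$, one bounds $\int_0^\infty(-\pi_+(c+y))\,\mathcal{U}_-(dy)$ by $\mathcal{U}_-(1)\bigl(-\pi_+(c)+\PPp(c)\bigr)<\infty$ for any $c>0$ (taking $c=2x/3$), and then dominated convergence applies. Your argument can also be repaired differently: Tonelli on $\int_{[0,\infty)}\int_a^\infty(-\pi_+(s+y))\,ds\,\mathcal{U}_-(dy)=\bar\mu_+(a)<\infty$ gives $\int_{[0,\infty)}(-\pi_+(s+y))\,\mathcal{U}_-(dy)<\infty$ for a.e.\ $s>a$, and since this function of $s$ is non-increasing (by the hypothesis on $-\pi_+$), it is finite for every $s>0$. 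Either route works, but as written your bound is not established.
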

\begin{proof}
 Fix $x>0$ and choose $0<h<x/3$. Then we have the trivial bound using the non-increasing property of $-\pi_+(x)$ and the description \eqref{eq:ami-inv} of $\bar{\mu}_+(x)$
\begin{eqnarray*}
\frac{\left|\bar\mu_+(x\pm h)-\bar\mu_+(x)\right|}{h}&\leq& \int_{0}^{\infty}\frac{\left|\PPp(x+y\pm h)-\PPp(x+y)\right|}{h}\mathcal{U}_-(dy)\\ &\leq& \int_{0}^{\infty}\left(-\pi_+\left(x+y-h\right)\right)\mathcal{U}_-(dy)\\ &\leq& \int_{0}^{\infty}\left(-\pi_+\left(\frac{2x}{3}+y\right)\right)\mathcal{U}_-(dy).
\end{eqnarray*}
We show now that the last expression is finite. Note that
\[\int_{0}^{\infty}\left(-\pi_+\left(\frac{2x}{3}+y\right)\right)\mathcal{U}_-(dy)\leq \sum_{n\geq 0}-\pi_+\left(\frac{2x}{3}+n\right)\left(\mathcal{U}_-(n+1)-\mathcal{U}_-(n)\right).\]
From the trivial inequality  $\mathcal{U}_-(n+1)-\mathcal{U}_-(n)\leq \mathcal{U}_-(1)$, see \cite[ Chapter 2, p.11]{Doney}, and since $-\pi_+(x)$ is the non-increasing density of $\PPp(x)$, we have with $C=\mathcal{U}_-(1)>0$,
\begin{eqnarray*}
\int_{0}^{\infty}-\pi_+\left(\frac{2x}{3}+y\right)\mathcal{U}_-(dy) & \leq & C\sum_{n\geq 0}-\pi_+\left(\frac{2x}{3}+n\right)\\
&\leq&
-C\pi_+\left(\frac{2x}{3}\right)+C\sum_{n\geq 1}\left(\PPp\left(\frac{2x}{3}+n-1\right)-\PPp\left(\frac{2x}{3}+n\right)\right)\\
&\leq& -C\pi_+\left(\frac{2x}{3}\right)+C\PPp\left(\frac{2x}{3}\right)<\infty.
\end{eqnarray*}
Therefore, for all $x>0$, the dominated convergence applies and gives
\[u(x)=\int_{0}^{\infty}\pi_+(x+y)\mathcal{U}_-(dy).\]
As $-\pi_+(x)$ is non-increasing we deduce that $-u(x)$ is non-increasing as well.
\end{proof}

In the case ${\bf P}+$, in comparison to the case ${\bf E}_+$, we have that $\xi$ does not necessarily have some positive exponential moments.
To circumvent this difficulty we introduce the  sequence of \LLPs  $\xi^{(n)}$ obtained from $\xi$ by the following construction: we keep the negative jumps intact and we discard some of the positive ones. More precisely, we thin the positive jumps of $\xi$ to get a \LLP $\xi^{(n)}$ with $\PPp^{(n)}$  whose density has the form
\begin{align}\label{modifiedPi}
&\pi^{(n)}_+(x)=\pi_+(x)\left(\mathbb{I}_{\{0<x\leq 1\}}+e^{-n^{-1}(x-1)}\mathbb{I}_{\{x>1\}}\right).
\end{align}
Clearly, $-\pi^{(n)}_+(x)$ is non-increasing and $\E\left[e^{s\xi^{(n)}_{1}}\right]<\infty$, for $s\in(0,n^{-1})$, see \eqref{modifiedPi}. Moreover, since we have only thinned the positive jumps and pointwise $\lim_{n\to\infty}\pi^{(n)}_+(x)=\pi_+(x)$, see \eqref{modifiedPi},
\begin{equation}\label{convergence}
   \lim_{n\to\infty}\xi^{(n)}\stackrel{a.s.}=\xi
\end{equation}
 almost surely in the Skorohod space $\mathcal{D}(0,\infty)$.
Finally, since $-\infty<\E\left[\xi^{(n)}_{1}\right]<\E\left[\xi_{1}\right]<0$ and  $-\pi^{(n)}_+(x)$ is non-increasing then Lemma \ref{Vigon} applies and we deduce that the \LL measure of the ascending ladder height process of $\xi^{(n)}$ has a negative density whose absolute value is non-increasing in $x$. Then since, for each $n\geq1$, $\xi^{(n)}$ has some finite positive exponential moments, we have that
\begin{equation}\label{approx}
 {\rm{I}}_{\xi^{(n)}}\stackrel{d}={\rm{I}}_{H_{(n)}^{-}} \times {\rm{I}}_{Y^{(n)}}.
 \end{equation}
 Since we thinned the positive jumps of $\xi$, for all $t\geq 0$, $\xi^{(n)}_{t}\leq \xi_{t}$ and the monotone convergence theorem together with \eqref{convergence} imply that
\begin{equation}\label{limit}
\lim_{n\to\infty}{\rm{I}}_{\xi^{(n)}}\stackrel{a.s.}={\rm{I}}_{\xi}.
\end{equation}
By the choice of the approximating sequence $\xi^{(n)}$ we can first use Lemma \ref{Lemma3} to get
\begin{equation}\label{Ingredient1}
\lim_{n\to\infty}H_{(n)}^{-}\stackrel{d}=H^{-}
\end{equation}
and then Lemma \ref{Lemma1} (b) to obtain that
\begin{equation}\label{ConvSub}
\lim_{n\to\infty}{\rm{I}}_{H_{(n)}^{-}}\stackrel{d}={\rm{I}}_{H^{-}}.
\end{equation}
Again from  Lemma \ref{Lemma3} we deduce that $k^{(n)}(0,-s) \rightarrow k(0,-s)$, for all $s\geq0$, and $\lim_{n\to\infty}\E[Y^{(n)}_1]=-\lim_{n\to\infty}k^{(n)}(0,0)=\E[Y_1]$, so we can apply Lemma \ref{Lemma1} (a) to get that
\[\lim_{n\to\infty}{\rm{I}}_{Y^{(n)}}\stackrel{d}={\rm{I}}_{Y},\]
which completes the proof in this case. \QED

\subsubsection{The case ${\bf P}_{\pm}$}
First from the philanthropy theory developed by  Vigon \cite{Vigon}, we know that the conditions $\mu_+ \in  \mathcal{P}$ and $\mu_- \in  \mathcal{P}$ ensure the existence of a L\'evy process $\xi$  with ladder processes $H^+$ and $H^-$ and such that the Wiener-Hopf factorization \eqref{eq:wh} holds on $i\R$. Since we also assume that $k_+>0$, this L\'evy process necessarily drifts to $-\infty$.
Next let us introduce the Laplace exponents
\begin{eqnarray}\label{lL-KLadder}
 \phi^{(p)}_{+}(z)&=&
 \delta_{+}z + \int_{(0,\infty)}(\e^{zx}-1)\mu^{(p)}_+(\d x)-k_{+}\,,\\
 \phi_-^{(n)}(z)&=&
 -\delta_{-}z -\int_{(0,\infty)}(1-\e^{-zx})\mu^{(n)}_-(\d x),
 \end{eqnarray}
where we set $\mu_+^{(p)}(dx)=e^{-x/p}\mu_+(dx),\,p>0$, and $\mu_-^{(n)}(dx)=e^{-x/n}\mu_+(dx),\,n>0$. Plainly,  for any $p>0,\,n>0$,  $\mu_+^{(p)}\in \mathcal{P}$ and $\mu_-^{(n)} \in \mathcal{P}$, hence there exists a  L\'evy process $\xi^{(p,n)}$ with Laplace exponent $\Psi^{(p,n)}$ satisfying
 \begin{equation}
 \Psi^{(p,n)} (z) = -\phi^{(p)}_{+}(z)\phi^{(n)}_{-}(s),
 \end{equation}
which is easily seen to be analytic on the strip $-1/n <\Re(z)<1/p$. Moreover, from \cite[Corollary 4.4.4]{Doney}, we have $\E[\xi_1^{(p,n)}] = -k_+ \left(\int_0^{\infty}xe^{-x/n}\mu_+(dx)+\delta_-\right) $, which is clearly finite and negative.  Hence the conditions ${\bf E}_+$ are satisfied and we have, with the obvious notation, that
\[{\rm{I}}_{\xi^{(p,n)}}\stackrel{d}={\rm{I}}_{H_{(n)}^{-}}\times {\rm{I}}_{Y^{(p)}}\]
where for any $p>0$, $Y^{(p)}$ is a spectrally positive L\'evy process with Laplace exponent $\psi_+^{(p)}(-s)=-s\phi^{(p)}_{+}(-s),\: s\geq0$.
Let us first deal with the case $n\rightarrow \infty$. Since   $ \phi_-^{(n)}(s) \rightarrow  \phi_-(s)$, for all $s\geq0$, we have that
\[\lim_{n\rightarrow \infty}H_{(n)}^{-}\stackrel{d}=H^{-}\]
and  from Lemma \ref{Lemma1} (b) we get that
\[\lim_{n\rightarrow \infty}{\rm{I}}_{H_{(n)}^{-}}\stackrel{d}={\rm{I}}_{H^{-}}.\]
Thus, we deduce that, for any fixed $p>0$, the sequence  $({\rm{I}}_{\xi^{(p,n)}})_{n\geq1}$ is tight. Moreover,  for any fixed $p>0$,  we also have  $\xi^{(p,n)}\stackrel{d}{\rightarrow}\xi^{(p)}$, as $n\rightarrow \infty$, where $\xi^{(p)}$ has a Laplace exponent $\Psi^{(p)}$ given by
 \begin{equation}
 \Psi^{(p)}(z) = -\phi^{(p)}_{+}(z)\phi_{-}(z).
 \end{equation}
Indeed this is true by the philanthropy theory. Then from Lemma \ref{Lemma1} (c), we have that
\[\lim_{n\rightarrow \infty}{\rm{I}}_{\xi^{(p,n)}}\stackrel{d}={\rm{I}}_{\xi^{(p)}}\stackrel{d}={\rm{I}}_{H^{-}} \times {\rm{I}}_{Y^{(p)}},\]
which  provides a proof of the statement in the case ${\bf P}_{\pm}$ together with the existence of some finite positive exponential moments. Next,  as $p\rightarrow \infty,\: \phi_+^{(p)}(s) \rightarrow  \phi_+(s)$, for all $s\geq0$, and we have that
\[\lim_{p\rightarrow \infty}Y^{(p)}\stackrel{d}=Y,\]
where  $Y$ is a spectrally positive L\'evy process with Laplace exponent $\psi_+(-s)=-s\phi_{+}(-s)$.  As $\E[Y^{(p)}_1] = \phi_+^{(p)}(0)=-k_+ $, we can use Lemma \ref{Lemma1} (a) to get
\[\lim_{p\rightarrow \infty}{\rm{I}}_{Y^{(p)}}\stackrel{d}={\rm{I}}_{Y}.\]
As above, we conclude from Lemma \ref{Lemma1} (c) that
\[\lim_{p\rightarrow \infty}{\rm{I}}_{\xi^{(p)}}\stackrel{d}={\rm{I}}_{\xi}\stackrel{d}={\rm{I}}_{H^{-}}\times{\rm{I}}_{Y},\]
which completes the proof of the theorem. \QED
\section{Proof of the corollaries} \label{proof_cons}
\subsection{Corollary \ref{Corollary1}}
First, since $\xi$ is spectrally negative and has a negative mean, it is well known that the function $\Psi$ admits an analytical extension on the right-half plane which is convex on $\R^+$ drifting to $\infty$, with $\Psi'(0^+)<0$, and thus there exists $\gamma>0$ such that $\Psi(\gamma)=0$. Moreover, the Wiener-Hopf factorization for spectrally negative L\'evy processes boils down to
\[ \Psi(s)=\frac{\Psi(s)}{s-\gamma}(s-\gamma),\: s>0.\]
It is not difficult to check that with $\phi_+(s)=s-\gamma$ and $\phi_-(s)=-\frac{\Psi(s)}{s-\gamma}$, we have $\mu_-,\,\mu_+  \in \mathcal{P}$. Observing  that $\psi_+(s)=s^2-\gamma s$ is the Laplace exponent of a scaled Brownian motion with a negative drift $\gamma$, it is well-known, see e.g. \cite{Yor-01}, that
\[{\rm{I}}_Y \stackrel{d}{=}G_{\gamma}^{-1}.\]
The factorization follows then from Theorem \ref{MainTheorem} considered under the condition ${\bf P}_{\pm}$.  Since the random variable $G_{\gamma}^{-1}$ is  MSU, see \cite{Cuculescu}, we have that if ${\rm{I}}_{H^-}$ is unimodal then ${\rm{I}}_{\xi}$ is unimodal, which completes the proof of (1).
Next, (2) follows easily from the identity
\begin{eqnarray}
m_{{\xi}}(x)&=&\frac{1}{\Gamma(\gamma)}x^{-\gamma-1}\int_0^{\infty}e^{-y/x}y^{\gamma}m_{H^-}(y)dy \label{eq:dsn}
\end{eqnarray}
 combined with an argument of monotone convergence.

Further,  we recall  that Chazal et al.~\cite[Theorem 4.1]{Chazal-al-10} showed, that for any $\beta \geq 0$, $\phi_{\beta}(s) = \frac{s}{s+\beta}
\phi_-(s+\beta)$ is also the Laplace exponent of a negative of a subordinator and with the obvious notation
\begin{eqnarray} \label{eq:tbs}
m_{H^-_{\beta}}(x) &=& \frac{x^{\beta}m_{H^-}(x)}{\E[{\rm{I}}_{H^-}^\beta]},\quad x>0.
\end{eqnarray}
Then, assuming that  $1/x<\lim_{u\rightarrow \infty} \Psi(u)/u$, we have, from \eqref{eq:ms}, \eqref{eq:dsn} and \eqref{eq:tbs},
\begin{eqnarray*}
m_{{\xi}}(x)&=&\frac{1}{\Gamma(\gamma)}x^{-\gamma-1}\sum_{n=0}^{\infty}(-1)^n  \frac{x^{-n}}{n!}\int_0^{\infty}y^{n+\gamma}m_{H^-}(y)dy \\
&=&\frac{\E[{\rm{I}}_{H^-}^{\gamma}]}{\Gamma(\gamma)}x^{-\gamma-1}\sum_{n=0}^{\infty}(-1)^n \frac{x^{-n}}{n!}\frac{n!}{\prod_{k=1}^{n}-\frac{k}{k+\gamma}\phi_-(k+\gamma)} \\
&=&\frac{\E[{\rm{I}}_{H^-}^{\gamma}]}{\Gamma(\gamma)\Gamma(\gamma+1)}x^{-\gamma-1}\sum_{n=0}^{\infty}(-1)^n \frac{\Gamma(n+\gamma+1)}{\prod_{k=1}^{n}-k\phi_-(k+\gamma)} x^{-n}\\
&=&\frac{\E[{\rm{I}}_{H^-}^{\gamma}]}{\Gamma(\gamma)\Gamma(\gamma+1)}x^{-\gamma-1}\sum_{n=0}^{\infty}(-1)^n \frac{\Gamma(n+\gamma+1)}{\prod_{k=1}^{n}\Psi(k+\gamma)}x^{-n},
\end{eqnarray*}
where we used an argument of dominated convergence and the identity $-k\phi_-(k+\gamma)=\Psi(k+\gamma)$. Next, again from \eqref{eq:dsn}, we deduce that
\begin{eqnarray*}
x^{-\beta}m_{\xi}(x^{-1})&=&\frac{1}{\Gamma(\gamma)}x^{\gamma+1-\beta}\int_0^{\infty}e^{-xy}y^{\gamma}m_{H^-}(y)dy
\end{eqnarray*}
from where we easily see that, for any $\beta\geq \gamma+1$, the mapping $x\mapsto x^{-\beta}m_{\xi}(x^{-1})$ is completely monotone as the product of two Laplace transforms of positive measures. The proof of the Corollary is completed by invoking \cite[Theorem 51.6]{Sato-99} and noting that $\rm{I}^{-1}_{\xi}$ has a density given by $x^{-2}m_{\xi}(x^{-1})$, i.e. with $\beta=2$.
\subsection{Corollary \ref{Corollary2}}
We first observe from the equation \eqref{eq:ami-inv} that, in this case,
\begin{eqnarray*}
\bar{\mu}_+(x)&=&c e^{-\lambda x} \int_0^{\infty}e^{-\lambda y}\mathcal{U}_-(dy)\\
&=&c_-e^{-\lambda x},
\end{eqnarray*}
where  the last identity follows from \cite{Doney} and we have set $c_-=\frac{c}{\phi_-( \lambda)}$. From \eqref{Phi}, we deduce that $Y$ is a spectrally positive L\'evy process with Laplace exponent given, for any $s<\lambda$, by
\begin{eqnarray*}
 \psi_+(s)&=&\delta_+ s^2-k_+s + c_- \frac{s^{2}}{\lambda-s}\\
&=&\frac{s}{\lambda-s}\left(-\delta_+ s^2-(\delta_+\lambda+k_+ + c_-)s  -k_+\lambda \right),
\end{eqnarray*}
where $\delta_+>0$ since $\sigma>0$, see \cite[Corollary 4.4.4]{Doney}. Thus, using the continuity and convexity of $\psi_+$  on $(-\infty, \lambda)$ and on $(\lambda, \infty)$, studying its asymptotic behavior on these intervals and the identity  $\psi_+'(0)=-k_+<0$, we easily show that the equation  $\psi_+(s)=0$ has 3 roots which are real, one is obviously  $0$ and the two others $\theta_1$ and $\theta_2$ are such that $0<\theta_1<\lambda<\theta_2$. Thus,
\begin{eqnarray*}
 \psi_+(-s)
&=&\frac{\delta_+  s}{\lambda+s}\left(s+\theta_1\right)\left(s+\theta_2\right),\: s>-\lambda\,.
\end{eqnarray*}
Next, from \eqref{eq:msn}, we have,  with $C=k_+\frac{\Gamma(\lambda+1)}{  \Gamma(\theta_1+1)\Gamma(\theta_2+1)}$ and for $m=2,\ldots,$ that
\[ \E[{\rm{I}}_{Y}^{-m}] = C \delta_+^{m-1}   \frac{ \Gamma(m+\theta_1) \Gamma(m+\theta_2)}{\Gamma(m+\lambda)}\]
from where we easily deduce \eqref{eq:hy} by moments identification. Note that a simple computation gives that $\theta_1\theta_2=\delta_+\lambda k_+$ securing that the distribution of ${\rm{I}}_{Y}$ is proper. Next, the random variable ${\rm{I}}_{Y}^{-1}$ being moment determinate, we have, for $\Re(z)<\theta_1+1$, that
\[ \E[{\rm{I}}_{Y}^{z-1}] = C \delta_+^{-z}   \frac{ \Gamma(-z+\theta_1+1) \Gamma(-z+\theta_2+1)}{\Gamma(-z+\lambda+1)}.\]
 Applying the inverse Mellin transform, see e.g.~\cite[Section 3.4.2]{Paris}, we get
\begin{eqnarray}\label{Cor.2.4}
m_{Y}\left(\frac{x}{\delta^+}\right)&=& C \sum_{i=1}^2x^{-\theta_i-1} \mathcal{I}_i(- x^{-1}), \: x>0,
\end{eqnarray}
where $\mathcal{I}_i(x) = \sum_{n=0}^{\infty}b_{n,i}\frac{x^n}{n!}$, $b_{n,1}= \frac{\Gamma(\theta_2-\theta_1-n)}{\Gamma(\lambda-\theta_1-n)}$ and $ b_{n,2}=\frac{\Gamma(\theta_1-\theta_2-n)}{\Gamma(\lambda-\theta_2-n)}$.
The proof of the Corollary is  completed by following a line of reasoning similar to the proof of Corollary \ref{Corollary1}.

\subsection{Corollary \ref{Corollary3}}
For any $\alpha \in (0,1)$, let us observe that, for any $s \geq 0$,
\begin{eqnarray}
\phi_-(-s)&=& \frac{\alpha s \Gamma(\alpha (s+1)+1)}{(1+s)\Gamma(\alpha   s+1)}  \\
&=&\int_0^{\infty}(1-e^{-sy})u_{\alpha}(y)dy \label{eq:lpp},
\end{eqnarray}
 where $u_{\alpha}(y) =\frac{e^{-y}e^{-y/\alpha}}{\Gamma(1-\alpha)(1-e^{-y/\alpha})^{\alpha+1}}$.
We easily check that $u_{\alpha}(y)dy \in \mathcal{P}$ and hence $\Psi$ is a Laplace exponent of a L\'evy process which drifts to $-\infty$.  Next, we know, see e.g. \cite{Patie-aff}, that \[{\rm{I}}_{\tilde{H}^-}\stackrel{d}=S_\alpha^{-\alpha},\]
where $\tilde{H}^{-}$ is the negative of the subordinator having Laplace exponent \[\tilde{\phi}_-(-s) = \frac{\alpha\Gamma(\alpha s+1)}{\Gamma(\alpha(s-1)+1)}.\]
Observing that $\phi_-(-s) = \frac{-s}{-s+1} \tilde{\phi}_-(-s+1)$, we deduce, from \eqref{eq:tbs}, that
\begin{equation} \label{eq:ds}
m_{H^{-}}(x)=\frac{x^{-1/\alpha}}{\alpha}g_{\alpha}\left(x^{-1/\alpha}\right) ,\: x>0,\end{equation}
from which we readily get the expression \eqref{eq:dis}. Then,  we recall  the following power series representation of positive stable laws, see e.g. \cite[Formula (14.31)]{Sato-99},
\[ g_{\alpha}(x)= \sum_{n=1}^{\infty} \frac{(-1)^n}{\Gamma(-\alpha n) n!}x^{-(1+\alpha n)},\: x>0.\]
 Then, by means of an argument of dominated convergence justified by the condition $\lim_{s\rightarrow \infty}s^{\alpha-1}\phi_+(-s)=0$, we get, for all $x>0$, that
 \begin{eqnarray*}
m_{{\xi}}(x)
&=&\frac{k_+}{\alpha}\sum_{n=1}^{\infty} \frac{(-1)^n}{\Gamma(-\alpha n) n!}x^{n} \int_0^{\infty} y^{-(n+1)}f_{Y}(y)dy \\
&=&\frac{k_+}{\alpha}\sum_{n=1}^{\infty} \frac{\prod_{k=1}^{n}\phi_+(-k)}{\Gamma(-\alpha n) n!}x^{n},
\end{eqnarray*}
 where we used the identities \eqref{eq:msn},  $ \E[-Y_1]=k_+$ and $\psi_+(-k)=-k\phi_{+}(-k)$. The fact that the series is absolutely convergent is justified by using classical criteria combined with the Euler's reflection formula $\Gamma(1-z)\Gamma(z) \sin(\pi z)= \pi$ with the asymptotics
\begin{equation} \label{eq:ag}\frac{\Gamma(z+a)}{\Gamma(z+b)} = z^{a-b}\left(1+O\left(|z|^{-1}\right)\right) \quad \textrm{  as } z \to \infty, \:  |arg(z)|<\pi,\end{equation}
see e.g.~\cite[Chap.~1]{Lebedev-72}.
We complete the proof by mentioning  that Simon \cite{Simon} proved recently that  the positive stable laws are MSU if and only if $\alpha\leq 1/2$ which implies, from \eqref{eq:ds}, that ${\rm{I}}_{H^{-}}$ is also MSU in this case.
\providecommand{\bysame}{\leavevmode\hbox to3em{\hrulefill}\thinspace}
\providecommand{\MR}{\relax\ifhmode\unskip\space\fi MR }
\providecommand{\MRhref}[2]{%
  \href{http://www.ams.org/mathscinet-getitem?mr=#1}{#2}
}
\providecommand{\href}[2]{#2}

\end{document}